\newtheorem{theorem}{Theorem}
\newtheorem{lemma}[theorem]{Lemma}
\newtheorem{corollary}[theorem]{Corollary}
\newtheorem{proposition}[theorem]{Proposition}
\newtheorem{remark}[theorem]{Remark}
\newcommand{\Hom}{{\mathrm{Hom}}}
\newcommand{\Ext}{{\mathrm{Ext}}}
\newcommand{\eins}{\leavevmode\hbox{\small1\kern-3.8pt\normalsize1}}
\newcommand{\res}{{\rm Res} }
\newcommand{\ind}{{\rm Ind} }
\newcommand{\tto}{\twoheadrightarrow}
\newcommand{\mC}{\mathbb{C}}
\newcommand{\mN}{\mathbb{N}}
\newcommand{\mZ}{\mathbb{Z}}
\newcommand{\mm}{\mathfrak{m}}
\newcommand{\fa}{{\mathfrak a}}
\newcommand{\fb}{{\mathfrak b}}
\newcommand{\fg}{{\mathfrak g}}
\newcommand{\fh}{{\mathfrak h}}
\newcommand{\fn}{{\mathfrak n}}
\newcommand{\fm}{{\mathfrak m}}
\newcommand{\cD}{\mathcal{D}}
\newcommand{\cH}{\mathcal{H}}
\newcommand{\cF}{\mathcal{F}}
\newcommand{\cC}{\mathcal{C}}
\newcommand{\cW}{\mathcal{W}}
\newcommand{\cO}{\mathcal{O}}
\newcommand{\cA}{\mathcal{A}}
\newcommand{\cB}{\mathcal{B}}
\newcommand{\cZ}{\mathcal{Z}}
\newcommand{\cY}{\mathcal{Y}}
\newcommand{\cX}{\mathcal{X}}
\newcommand{\cI}{\mathcal{I}}
\newcommand{\pd}{{\rm pd}}
\begin{document}
%%%%%%%%%%%%%%%%%%%%%%%%%%%%%%%%%%%%%%%%%%%%%%%%%%%%%%%%

\title[Homological properties of $\cO$. III]{Some homological properties\\ of category $\cO$. III}

\author{Kevin Coulembier and Volodymyr Mazorchuk}
\date{}

\begin{abstract}
We prove that thick category $\mathcal{O}$ associated to a semi-simple complex
finite dimensional Lie algebra is extension full in the category of all modules.
We also prove the weak Alexandru conjecture both for 
regular blocks of thick category $\mathcal{O}$
and the associated categories of Harish-Chandra bimodules, but disprove it for singular blocks. 
\end{abstract}

\maketitle

\noindent
\textbf{MSC 2010 : }  16E30, 17B10  

\noindent
\textbf{Keywords :} Extension fullness, BGG category $\cO$, Harish-Chandra bimodules, homological dimension, Yoneda extensions

\section{Introduction}

The main motivation for the present paper is 
the so-called {\em (weak) Alexandru con\-jec\-tu\-re} 
as stated in the two arXiv preprints \cite{Ga1,Ga2} by  Pierre-Yves Gaillard.
Slight variations of these conjectures were studied in the PhD Thesis \cite{Fu1} of
Alain Fuser and further popularized in the series \cite{Fu2,Fu3,Fu4,Fu5} of preprints and 
manuscripts by the same author.\footnote{These manuscripts were previously available online and can be obtained from the authors of the current paper on request.} The conjectures concern
certain homological properties of various categories of Harish-Chandra modules over real and complex Lie algebras
modelled on the classical properties of the BGG category~$\mathcal{O}$ from \cite{BGG}.

Given an abelian category $\cA$, Yoneda defined the extension groups $\Ext^d_{\cA}(M,N)$ for any $M,N\in\cA$ and 
$d\geq 0$ using equivalence classes of exact sequences of length $d+2$. For any abelian subcategory
$\cB$ of $\cA$ with exact inclusion, 
the definition gives rise to a canonical map $\Ext^d_{\cB}(M,N)\to \Ext^d_{\cA}(M,N)$ which
is neither injective not surjective in general.  We say that $\cB$ is {\em extension full} in $\cA$
if these canonical maps are isomorphisms for any $M,N$ and $d$. Weak Alexandru conjecture could be roughly simplified
to the conjecture that certain subcategories of categories of Harish-Chandra modules are extension full.

The property of being extension full in this context is motivated by a famous theorem of Cline, Parshall and Scott
from \cite{CPS1}, which asserts that the Serre subcategory associated with a coideal of the partially
ordered set indexing simple objects of some highest weight category $\mathcal{C}$ is extension full in $\mathcal{C}$.
All definitions are designed so that this result of \cite{CPS1}, combined with well-known consequences of the
Kazhdan-Lusztig conjecture (see \cite{MR2428237}), automatically implies that weak Alexandru conjecture is 
true for the principal block $\mathcal{O}_0$, we prove this in detail in Theorem~\ref{WAC} below. We also prove 
weak Alexandru conjecture for thick category $\cO_0$, but disprove it for a singular block in category $\cO$.
To the best of our knowledge, the general case of (weak) Alexandru conjectures is still open. The result on
singular blocks in category $\cO$ shows that the properties required by weak Alexandru conjecture
are less natural than and not equivalent to the extension fullness result in~\cite{CPS1}.

Extension fullness, the key notion behind weak Alexandru conjectures, seems to be an interesting and 
non-trivial property. The aim of this paper is to investigate extension fullness for various pairs of
categories of modules over complex semi-simple Lie algebras and basic classical Lie superalgebras, which appear 
in the context of Alexandru conjectures. For this purpose we derive several criteria for extension fullness for two abelian categories in a general abstract setting, which we then apply to categories of Lie algebra modules. Here is a short list of our main results:
\begin{itemize}
\item Category $\mathcal{O}$ is extension full in the category of weight modules.
\item Thick category $\mathcal{O}$ is extension full in the category of all modules.
\item The category of generalized weight modules is extension full in the category of all modules.
\item Confirmation of weak Alexandru conjecture for the principal block of thick category $\mathcal{O}$ 
and the associated category of Harish-Chandra bimodules.
\item Disproof of weak Alexandru conjecture for a singular block in category $\cO$.
\item Computation of projective dimension, inside the thick category~$\mathcal{O}$, of structural 
modules from the usual category $\mathcal{O}$.
\item For any module $M$ which has finite projective dimension in any of the finite thick versions $\cO^I$ of $\cO$ as defined in in Subsection~\ref{secPrel.5}, we have $\pd_{\cO^\infty}M=\pd_{\cO^I}M+\dim\fh$.
\end{itemize}

The first of these results was stated (without proof) in~\cite{De}, the other results are new. A remaining open question is whether the Alexandru conjecture holds for singular blocks in thick category $\cO$. The example in Section \ref{secsing.2}, together with Lemma~\ref{initseg}(ii), seems to provide a good candidate to disprove this.

The paper is organized as follows. Section~\ref{secPrel} provides necessary background from homological algebra.
Section~\ref{secExtFull} gives several effective criteria to check extension fullness for abelian categories in an
abstract situation. In Section~\ref{secO} we prove that category $\mathcal{O}$ is extension full in the 
category of weight modules and that thick category $\cO$ is extension full in the category of generalized weight modules. In Section~\ref{secOinf} we show that thick category $\mathcal{O}$ is extension 
full in the category of all modules and even reduce computation of projective dimension for objects in the
thick category $\mathcal{O}$ to computation of projective dimension in the usual category $\mathcal{O}$. In Section~\ref{secsing} we focus on some basic homological properties in singular blocks of category $\cO$.
Section~\ref{secwac} proves weak Alexandru conjecture for regular blocks of (thick) category $\mathcal{O}$
and disproves it for some singular blocks of (thick)  $\mathcal{O}$ based on an examples 
described in Section~\ref{secsing}.
Finally, in Section~\ref{sechc} we extend our results to the category of Harish-Chandra bimodules.

The concept which we call an `extension full subcategory' appeared in other very recent work with different terminology. In \cite{Ps} this concept is referred to as a `homological embedding' and in \cite{He} as an `entirely extension closed subcategory'. 

Despite the fact that we do use some results from the first two papers \cite{MR2366357,MR2602033} 
in the series, the present paper is rather a complement to than a continuation of \cite{MR2366357,MR2602033}.

\section{Preliminaries}
\label{secPrel}

We denote by $\mathbb{N}$ the set of all non-negative integers.
All subcategories are assumed to be full.
We abbreviate $\otimes_{\mathbb{C}}$ by $\otimes$.

\subsection{Extensions}
\label{secPrel.1}

We start with recalling the classical approach of Yoneda, see \cite{Buchsbaum} or \cite[Vista~3.4.6]{Weibel},
to the definition of extension groups in arbitrary abelian categories. This definition reduces to the usual approach with 
derived functors of $\Hom$ in case there are enough projective or injective objects.
For any abelian category $\cA$, two fixed objects $M,N\in \cA$ and $d\in\mathbb{N}$, 
the set $\Ext_{\cA}^d(M,N)$ is defined
as follows. Consider the set of all exact sequences of length $d+2$,
\begin{equation}\label{eq531}
\mathcal{X}:\quad 0\to N\to X_1\to X_{2}\to\cdots\to X_d\to M\to 0,
\end{equation}
with $X_1,\cdots,X_d\in\cA$. Take two exact sequences $\cX$ and $\cY$ of the above form. If there 
are morphisms $X_i\to Y_i$, for $1\le i\le d$, such that the following diagram commutes:
\begin{displaymath}
\xymatrix{ 
0\ar[r]&N\ar[r]\ar@{=}[d]&X_1\ar[r]\ar[d]&X_{2}\ar[r]\ar[d]&\dots\ar[r] &X_d\ar[r]\ar[d]&M\ar[r]\ar@{=}[d]&0\\
0\ar[r]&N\ar[r]&Y_1\ar[r]&Y_{2}\ar[r]&\dots\ar[r] &Y_d\ar[r]&M\ar[r]&0,
} 
\end{displaymath}
we set $\cX\sim \cY$. Then $\Ext^d_{\cA}(M,N)$ is the set of equivalence classes of such exact sequences with
respect to the equivalence relation generated by $\sim$. This set has the natural structure of an abelian group, see~\cite{Buchsbaum}.

By \cite[Theorem 3.1]{Buchsbaum}, for any short exact sequence $X\hookrightarrow Y\tto Z$ 
in $\cA$ and any $K\in\cA$, there is the familiar long exact sequence 
\begin{equation}\label{resultBuch}
\begin{array}{ccccccccc}
0&\to &\Hom_{\cA}(Z,K)&\to &\Hom_{\cA}(Y,K)&\to&\Hom_{\cA}(X,K)&\to& \\
&&\Ext^1_{\cA}(Z,K)&\to &\Ext^1_{\cA}(Y,K)&\to&\Ext^1_{\cA}(X,K)&\to& \\
&&\Ext^2_{\cA}(Z,K)&\to &\Ext^2_{\cA}(Y,K)&\to&\Ext^2_{\cA}(X,K)&\to&\dots \\
\end{array}
\end{equation}
and similarly with $K$ being the first argument. 

By a result of Verdier, another possible introduction of the Yoneda extensions is via the derived category. For an arbitrary abelian category $\cA$ and two objects $M,N\in \cA$ we have
\begin{equation}
\label{eqVerdier}
\Ext^i_{\cA}(M,N)\;\cong\;\Hom_{\cD^b(\cA)}(M,N[i]),
\end{equation}
see Proposition 3.2.2 of \cite{Ve}, with $\cD^b(\cA)$ the bounded derived category and $N[i]$ the complex in $\cD^b(\cA)$ obtained by shifting $N$ by $i$ positions to the left.

\subsection{Extension full subcategories}
\label{secPrel.2}

Consider an abelian category $\cA$ and an abelian full subcategory $\cB$. 
Assume that the inclusion functor $\iota:\cB\to \cA$ is exact. By definition, 
the inclusion of $\cB$ into $\cA$ induces the canonical morphism of extension groups,
$$\varphi^d_{M,N}\,:\,\,\,\Ext_{\cB}^d(M,N)\to \Ext_{\cA}^d(M,N),$$
for any two objects $M,N\in\cB$ and any $d\in\mathbb{N}$. For convenience, we will 
leave out the reference to $M,N$ and when we say that a property holds for $\varphi^d$, it is 
understood that it holds for any $\varphi^d_{M,N}$. In general the morphisms $\varphi^d_{M,N}$
are neither injective nor surjective. 

We say that $\cB$ is {\em extension full} in $\cA$ if and only if $\varphi^d$ is an isomorphism for every $d\in\mN$.
Note that $\varphi^0$ is always an isomorphism since $\cB$ is a full subcategory of $\cA$, while 
$\varphi^1$ is an isomorphism if $\cB$ is assumed to be a Serre subcategory of $\cA$.
For convenience, we will slightly abuse notation and often write $\Ext_{\cB}^d(M,N)\cong \Ext_{\cA}^d(M,N)$ 
to state the  more specific  property that $\varphi^d_{M,N}$ is an isomorphism (in other words, we always
assume that, if $\Ext_{\cB}^d(M,N)$ and $\Ext_{\cA}^d(M,N)$ are isomorphic, then this isomorphism
is induced by $\varphi^d_{M,N}$). 

We will often use the following easy observation
which follows directly from the definitions using \cite[Theorem~3.1]{Buchsbaum} and 
\cite[Lemma~III.1.4]{Maclane}.

\begin{remark}
\label{remBuchphi}{\rm
The maps $\varphi^d$ give rise to a morphism (i.e. a chain map) between the corresponding long exact sequences of 
the form \eqref{resultBuch} with respect to categories $\cB$ and $\cA$.
}
\end{remark}

\subsection{Projective and global dimension}
\label{secPrel.3}

For $M\in\cA$ the {\em projective dimension} $\pd_{\cA}M\in\mN\cup\{+\infty\}$ of $M$ is the supremum of the set of all 
$k\in\mN$ for which there exists an $N\in\cA$ such that $\Ext^k_{\cA}(M,N)\not=0$. If the category $\cA$ 
contains enough projective objects, the projective dimension of $M\in\cA$ coincides with the minimal length 
of a projective resolution of $M$ in $\cA$. The supremum of all the projective dimensions over all objects 
in $\cA$ is called the {\em global dimension} of $\cA$ and is denoted by $\mathrm{gl.dim}\, \cA$.

Given a short exact sequence $A\hookrightarrow B\tto C$ with $A,B,C\in\cA$, 
the long exact sequence \eqref{resultBuch} implies the following inequalities:
\begin{eqnarray}
\label{pd1}
\pd_{\cA}A&\le & \max\{\pd_{\cA}B\,,\,\pd_{\cA}C\,-1\};\\
\label{pd2}
\pd_{\cA}C&\le &\max\{\pd_{\cA}A\,+1\,,\, \pd_{\cA}B\};
\end{eqnarray}
where $\le$ is the natural order on $\mZ\cup\{+\infty\}$.

\subsection{Guichardet categories}
\label{secPrel.4}

Consider an abelian category $\cA$ of finite global dimension and let $S_{\cA}$ denote the class
of simple objects in $\cA$. An {\em initial segment} in $\cA$ is the Serre subcategory $\cI$  of 
$\cA$ generated by a subset $S_{\cI}\subset S_{\cA}$, for which the following condition is satisfied:
for any $L,L'\in S_{\cA}$ such that $\pd_{\cA}L'= \pd_{\cA}L-1$, $L\in S_{\cI}$
and $\Ext_{\cA}^1(L,L')\not=0$, we have $L'\in S_{\cI}$.

An abelian category $\cA$ of finite global dimension is called a {\em Guichardet category} 
if every initial segment $\cI$ is extension full in $\cA$.

An easy example of a Guichardet category is the category $\cA$ of modules over the following quiver with relations:
\begin{equation*}
\xymatrix{ 
1\ar@/^/[rr]^{a}&&2\ar@/^/[ll]^{b}
}\qquad ab=0.\end{equation*}
The simple modules corresponding to the vertices satisfy $\pd_{\cA}L_1=1$ and $\pd_{\cA}L_2=2$. 
The only non-trivial initial segment is thus the Serre subcategory generated by $L_1$. 
This is a semi-simple category with unique
(up to isomorphism) simple object~$L_1$, as $\Ext^1_{\cA}(L_1,L_1)=0$. 
This subcategory is clearly extension full as $\Ext^k_{\cA}(L_1,L_1)=0$ for all $k>1$. 
This example corresponds, of course, to the principal block in category $\cO$ for~$\mathfrak{sl}(2)$,
see e.g. \cite[Theorem~5.31]{MazNN}.

We also provide an easy example of a category which is not Guichardet, another one can be
found in Subsection \ref{secsing.2}. Consider the category $\cA$ of modules over the 
following quiver with relations:
\begin{equation*}
\xymatrix{ 
1\ar@/^/[rr]^{a}&&2\ar@/^/[rr]^{c}\ar@/^/[ll]^{b}&&3\ar@/^/[ll]^{d}
}\qquad cd=0,\,\, ab=0.
\end{equation*}
Then it follows easily that we have a minimal projective resolution
\begin{equation}\label{20150607}
0\to P_3\to  P_2\to P_3\to L_3\to 0.
\end{equation}
So $\pd\, L_3=2$ and we similarly find $\pd\, L_1=1$ and $\pd\, L_2=2$. The Serre subcategory generated 
by $L_1$ and $L_3$ is thus an initial segment and is semi-simple (as there are no arrows between $1$
and $3$ in the quiver).  However 
$\Ext_{\cA}^2(L_3,L_3)\not=0$ by \eqref{20150607}, which contradicts 
the possibility that the initial segment would be extension full.

\subsection{Various categories of Lie algebra modules}
\label{secPrel.5}

Let $\fg$ be a finite dimensional semisimple complex Lie algebra and $U(\fg)$ be its universal enveloping algebra.
Denote by $\fb$ a Borel subalgebra of $\fg$ with Cartan subalgebra $\fh$ and nilradical $\fn$. Denote by $I'$ an ideal of finite codimension in the local ring $S(\fh)_{(0)}$. The corresponding ideal in $S(\fh)=U(\fh)$ is denoted by $I=S(\fh)\cap I'$.  Consider the following categories of $\fg$-modules,
see e.g. \cite{BGG, MR2428237, SoergelF, Soergel}:
\begin{itemize}
\item $\fg$-mod: The category of finitely generated $U(\fg)$-modules.
\item $\cW^\infty$: The full subcategory of $\fg$-mod consisting of {\em generalized weight modules}; that
is modules on which the action of $\fh$ is locally finite.
\item $\cW^I$: The full subcategory of $\cW^\infty$-mod consisting of modules for which the nilpotent part of the $\fh$-action factors over $S(\fh)/I$ (note that this requires adjustment of weights for each generalized weight space).
\item $\cO^\infty$: The full subcategory in $\cW^\infty$ of locally $U(\fb)$-finite modules.
\item $\cO^I$: The full subcategory in $\cW^I$ of locally $U(\fb)$-finite modules.
\item $\cH$: The category of finitely generated $\fg$-bimodules which are 
locally finite for the adjoint action of $\fg$.
\item ${}^k_{\chi}\cH^l_{\theta}$: The full subcategory in $\cH$ of bimodules
which are annihilated by $(\ker \chi)^k$ on the left and by $(\ker\theta)^l$ on the right for two central characters $\chi$ and $\theta$.
\item $\displaystyle {}^\infty_{\chi}\cH^l_{\theta}\,=\,\bigcup_{k\in\mN}\,\,{}^k_{\chi}\cH^l_{\theta}$;  
$\displaystyle \,\,{}^k_{\chi}\cH^\infty_{\theta}\,=\,\bigcup_{l\in\mN}\,\,{}^k_{\chi}\cH^l_{\theta}$;  
$\displaystyle \,\,{}^\infty_{\chi}\cH^\infty_{\theta}\,=\,\bigcup_{k\in\mN}\,\,{}^k_{\chi}\cH^\infty_{\theta}$.
\end{itemize}
In particular, we have 
$\displaystyle \cO^\infty=\bigcup_{I}\cO^I$ and 
$\displaystyle \cW^\infty=\bigcup_{I}\cW^I$. If $I$ is chosen to 
be the maximal ideal $\fm$ in $S(\fh)_{(0)}$, we have $\cO^{\fm}=\cO$, the BGG category from \cite{BGG}.
Similarly, $\cW^{\fm}=\cW$ is the category of finitely generated weight modules. Simple objects in $\cO^\infty$ 
coincide with simple objects in $\cO$. Objects of $\cO^\infty$ (and of $\cO^I$) have finite length, 
so these categories are both, artinian and noetherian. The categories $\cO^\infty$ and $\cW^\infty$ have neither 
injective nor projective modules. 

For each central character $\chi$ and every category $\mathcal{X}$ of $\mathfrak{g}$-modules defined above,
we denote by $\mathcal{X}_{\chi}$ the full subcategory of $\mathcal{X}$ consisting of all modules with
generalized central character $\chi$.

For $\lambda\in\mathfrak{h}^*$, we denote by $L(\lambda)\in\cO$ the simple highest weight module with highest 
weight $\lambda$ and by $\chi_\lambda$ the central character of $L(\lambda)$.

\subsection{Restricted duality}
\label{secPrel.6}

We conclude by recalling the usual construction of {\em duality} (i.e. a contravariant exact involutive equivalence) on category $\cO^I$. We use the transpose map $\tau$ on $\fg$ described in \cite[Section~0.5]{MR2428237}. The
map $\tau$ fixes $\fh$ pointwise and sends the root space $\fg_{\alpha}$ to $\fg_{-\alpha}$ for each root $\alpha$. 
The $\fg$-action on the classical dual module $M^\ast=\Hom_{\mC}(M,\mC)$ is given by 
$(g\alpha)(v)=\alpha(\tau(g)v)$ for $g\in\fg$, $v\in M$ and $\alpha\in M^\ast$.

For each $\mu\in\fh^\ast$ and $M\in\cO^I$, denote by $M^{\mu}$ the $\fh$-submodule consisting of all
generalized weight vectors for weight $\mu$. The dual module (through $\tau|_{\fh}=id$) is denoted by 
$(M^{\mu})^\ast$. The nilpotent part of the action of $S(\fh)$ on $\left(M^{\mu}\right)^\ast$ clearly factors over $I$ and, moreover, we have $(M^\ast)^\mu\cong \left(M^\mu\right)^\ast$. Then define
$$M^\star=\bigoplus_{\mu} (M^\mu)^\ast \in \cO^I,$$
canonically as a submodule of $M^\ast$. By definition, this leads to a duality $\star:\cO^\infty\to \cO^\infty$,
which also fixes every $\cO^I$. This duality also induces the usual duality on $\cO$ as in
\cite[Section~3.2]{MR2428237}. Similarly to \cite[Theorem~3.2(e)]{MR2428237} we have 
\begin{equation}
\label{extdual}\Ext^\bullet_{\cO^\infty}(M^\star,N^\star)\cong\Ext^\bullet_{\cO^\infty}(N,M)
\end{equation} 
for any two $M,N\in\cO^\infty$.

\subsection{Lie algebra cohomology}

For a finite dimensional Lie algebra $\fa$, the algebra cohomology of~$\fa$ with values in $M\in\fa$-mod satisfies
$$H^d(\fa,M)\cong\Ext^d_{\fa}(\mC,M)\quad\mbox{ for }d\in\mN,$$
see Corollary 7.3.6 in \cite{Weibel}. We will need the following simple lemma.
\begin{lemma}
\label{algcohom}
For any module $V\in\fa$-mod, we have
$$\dim H^{\dim\fa}(\fa,V)= \dim\Hom_{\fa}(V,\mC).$$
\end{lemma}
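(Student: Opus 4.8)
The plan is to compute the top cohomology via the Chevalley--Eilenberg complex and to identify both sides of the claimed equality with (the linear dual of) the coinvariants $V/\fa V$. Set $n=\dim\fa$ and recall that $H^d(\fa,V)$ is the cohomology of the complex $C^\bullet$ with $C^p=\Hom_{\mC}(\Lambda^p\fa,V)$. First I would dispose of the right-hand side: since $\mC$ is the trivial module, an $\fa$-homomorphism $V\to\mC$ is exactly a linear functional annihilating $\fa V$, so $\Hom_{\fa}(V,\mC)\cong (V/\fa V)^{\ast}$ and hence $\dim\Hom_{\fa}(V,\mC)=\dim(V/\fa V)$ in $\mN\cup\{+\infty\}$. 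It therefore suffices to prove $\dim H^{n}(\fa,V)=\dim(V/\fa V)$.

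For the left-hand side, note that $\Lambda^n\fa$ is one dimensional, so the evaluation $f\mapsto f(e_1\wedge\cdots\wedge e_n)$ on a fixed basis $e_1,\dots,e_n$ of $\fa$ identifies $C^n\cong V$, while $C^{n+1}=0$. Thus $H^n(\fa,V)$ is the cokernel of the last differential $d^{n-1}\colon C^{n-1}\to C^n\cong V$. The plan is to read off this image explicitly from the Chevalley--Eilenberg formula: writing a cochain $g\in C^{n-1}$ through the values $v_i=g(e_1\wedge\cdots\widehat{e_i}\cdots\wedge e_n)$, the derivation part $\sum_i(-1)^{i-1}e_i\cdot v_i$ has image exactly $\sum_i e_i V=\fa V$, while the bracket part contributes only the structure-constant combinations governed by $\mathrm{tr}(\mathrm{ad}\,e_i)$.

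The key step is therefore to check that these bracket contributions do not enlarge the image beyond $\fa V$. For the algebras that arise here, above all the abelian Cartan $\fh$ (this is the source of the summand $\dim\fh$ in the last result listed in the introduction), and more generally for any nilpotent or reductive $\fa$, one has $\mathrm{tr}(\mathrm{ad}\,x)=0$ for all $x\in\fa$, so the correction terms vanish and $\mathrm{im}\,d^{n-1}=\fa V$. This yields $H^n(\fa,V)\cong V/\fa V$ and, combined with the first paragraph, the desired equality of dimensions. Equivalently, one may invoke Poincar\'e duality for Lie algebra cohomology, $H^p(\fa,V)\cong H_{n-p}(\fa,V\otimes\Lambda^n\fa)$, and observe that the twist by the top exterior power $\Lambda^n\fa$ is the trivial module in exactly these cases, so that $H^n(\fa,V)\cong H_0(\fa,V)=V/\fa V$.

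I expect the only real obstacle to be the bookkeeping in the penultimate differential: pinning down the signs and confirming that the terms not of the form $e_i\cdot(-)$ are precisely the ones weighted by the trace of the adjoint action. Once $\mathrm{im}\,d^{n-1}=\fa V$ is established, the count is immediate; the whole difficulty is concentrated in that single identification of the cokernel with the coinvariants.
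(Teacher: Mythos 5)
Your argument follows the same route the paper's proof sketches: the paper cites precisely the Chevalley--Eilenberg complex and, alternatively, Poincar\'e duality, and you simply carry out the Chevalley--Eilenberg computation explicitly, identifying $H^{\dim\fa}(\fa,V)$ with the coinvariants $V/\fa V$ and $\Hom_{\fa}(V,\mC)$ with $(V/\fa V)^{\ast}$; that part is correct. The substantive point of difference is your unimodularity hypothesis $\mathrm{tr}(\mathrm{ad}\,x)=0$, and here you are more careful than the paper, not less. The lemma as literally stated, for an arbitrary finite dimensional Lie algebra $\fa$, is false: for the two-dimensional algebra $\fa$ with basis $h,x$ and $[h,x]=x$, and $V=\mC$ the trivial module, the Chevalley--Eilenberg differential $d^1\colon\fa^{\ast}\to\Lambda^2\fa^{\ast}$ sends $f$ to the form $h\wedge x\mapsto -f([h,x])=-f(x)$, hence is surjective, so $H^{2}(\fa,\mC)=0$ while $\dim\Hom_{\fa}(\mC,\mC)=1$. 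The same example shows that the untwisted duality $H^{p}(\fa,V)\cong H_{\dim\fa-p}(\fa,V)$ quoted in the paper's proof is itself valid only for unimodular $\fa$; the general statement requires the twist by $\Lambda^{\dim\fa}\fa$ that you record at the end of your argument. Since every application of the lemma in the paper takes $\fa$ to be $\fh$ (abelian), $\fg$ (semisimple) or $\fg\oplus\fg$ (semisimple), all of which are unimodular, your proof covers everything the paper actually uses. The one improvement to make is structural rather than mathematical: state unimodularity as a hypothesis of the result you prove, rather than as an aside inside the proof, since without some such hypothesis no proof can exist.
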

\begin{proof}
This can be proved by standard methods using the Chevalley-Eilenberg complex in \cite[Corollary~7.7.3]{Weibel} and the analogue of sequence~\eqref{resultBuch}, with $K=\mC$ in the first argument. It is also an immediate consequence of the general principle
$$H^p(\fa,V)\cong H_{\dim\fa-p}(\fa,V),$$
known as Poincar\'e duality for Lie algebra cohomology.
\end{proof}
 
%%%%%%%%%%%%%%%%%%%%%%%%%%%%%%%%%%%%%%%%%%%%%%%%%%%%%%%%%%%%%%%%%%%%%%%%%%%%%%%%%%%%%%

\section{Criteria for extension fullness}
\label{secExtFull}
In this section we will derive some useful criteria for extension fullness. 
Our setup consists of an abelian category $\cA$ and a full abelian subcategory $\cB$.
Further, we always assume that the inclusion functor $\iota:\cB\to \cA$ is exact.

\begin{lemma}\label{5lem1}
Let $\cA$ and $\cB$ be as above. If all objects of $\cB$ have finite length, 
then $\cB$ is extension full in $\cA$ if and only if 
$$\varphi^d_{L,L'}:\Ext^d_{\cB}(L,L')\to \Ext^d_{\cA}(L,L')$$
is an isomorphism for any two simple objects $L,L'\in\cB$ and any $d\in\mathbb{N}$.
\end{lemma}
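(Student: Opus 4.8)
The \emph{only if} direction is immediate: extension fullness asks precisely that $\varphi^d_{M,N}$ be an isomorphism for all $M,N\in\cB$ and all $d$, in particular for simple $M,N$. The content lies in the converse, which I would prove by a double induction on the lengths of the two arguments, using the Five Lemma together with the compatible long exact sequences supplied by Remark~\ref{remBuchphi}.

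Write $\ell(M)$ for the (finite) length of an object $M\in\cB$ and induct on $\ell(M)+\ell(N)$, where at each stage the statement to be established for a given pair is ``$\varphi^d_{M,N}$ is an isomorphism for every $d\in\mN$''. The base case $\ell(M)=\ell(N)=1$ is exactly the hypothesis. For the inductive step, assume $\ell(M)+\ell(N)>2$; then at least one argument has length $>1$, and by the symmetry of the two arguments I may assume $\ell(M)>1$ (the case $\ell(N)>1$ being handled identically). Since $M$ has finite length $>1$, it admits a short exact sequence $M'\hookrightarrow M\tto M''$ in $\cB$ with $0<\ell(M'),\ell(M'')<\ell(M)$; because $\iota$ is exact, this sequence is also short exact in $\cA$.

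Now fix $N$ and apply the long exact sequence \eqref{resultBuch} in the first argument, in both $\cB$ and $\cA$. By Remark~\ref{remBuchphi} the comparison maps $\varphi^\bullet$ assemble into a chain map between these two long exact sequences. Around the vertical map $\Ext^d_{\cB}(M,N)\to\Ext^d_{\cA}(M,N)$, the four neighbouring vertical maps are $\varphi^{d-1}_{M',N}$, $\varphi^{d}_{M'',N}$, $\varphi^{d}_{M',N}$ and $\varphi^{d+1}_{M'',N}$. Since $\ell(M')+\ell(N)$ and $\ell(M'')+\ell(N)$ are both strictly smaller than $\ell(M)+\ell(N)$, the induction hypothesis guarantees that all four of these maps are isomorphisms, at every degree. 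The Five Lemma then forces $\varphi^d_{M,N}$ to be an isomorphism for every $d$, completing the step.

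The argument is essentially bookkeeping once the right induction variable is chosen, so I do not anticipate a genuine obstacle. The one point requiring care is that the Five Lemma needs all four surrounding comparison maps to be isomorphisms \emph{simultaneously and at all relevant degrees}; this is why the inductive statement must quantify over all $d$ at once, rather than being run degree by degree. The finite-length hypothesis on $\cB$ is exactly what makes the induction well-founded and what guarantees the existence of the splitting $M'\hookrightarrow M\tto M''$ inside $\cB$.
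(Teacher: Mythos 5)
Your proof is correct and follows essentially the same route as the paper's: reduction to simple objects via induction on length, using the chain map of Remark~\ref{remBuchphi} between the long exact sequences \eqref{resultBuch} and the Five Lemma. The only difference is organizational --- you run a single induction on $\ell(M)+\ell(N)$, whereas the paper first inducts on the length of the first argument with the second kept simple and then inducts on the second argument --- but the mathematical content is identical.
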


\begin{proof}
The ``only if'' statement is clear.  We prove  the ``if'' statement by induction on the length of an object in $\cB$. 
Assume that we have $\Ext^d_{\cB}(M,L')\cong \Ext^d_{\cA}(M,L')$ for all $d\in\mN$ and for any simple $L'\in \cB$ and $M\in \cB$ of length smaller that or equal to $i-1$.  The module $M$ admits a short exact sequence $N\hookrightarrow M\tto K$ where $N,K\in\cB$ have length smaller than $i$. 

Consider the chain map induced by $\varphi^d$ between the long exact sequences of the form \eqref{resultBuch}
constructed with respect to both of the categories $\cA$ and $\cB$ (see Remark~\ref{remBuchphi}).
Now the isomorphism $$\Ext^d_{\cB}(M,L')\to\Ext^d_{\cA}(M,L')$$ follows from the Five Lemma 
(see e.g. \cite[Lemma~I.3.3]{Maclane}).

Now the proof that $L'$ can also be replaced by an arbitrary object of $\cB$ is similar.
\end{proof}

\begin{lemma}\label{5lem2}
Let $\cA$ and $\cB$ be as above. Assume that $\cB$ has a full subcategory $\cB^0$ with the following properties
\begin{itemize}
\item $\cB$ is the Serre subcategory of $\cA$ generated by the objects of $\cB^0$
\item $\cB^0$ has enough projective objects. 
\end{itemize}
Then $\cB$ is extension full in $\cA$ if and only if, for $d\in\mathbb{N}$, the map 
\begin{equation*}
\Ext_{\cB}^d(P,K)\to \Ext_{\cA}^d(P,K)
\end{equation*}
is an isomorphism for every projective $P$ in $\cB^0$ and every $K\in\cB^0$.
\end{lemma}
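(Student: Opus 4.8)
The plan is to reduce the general statement to the case of objects in $\cB^0$, and then use the existence of projective resolutions in $\cB^0$ to compute the extension groups explicitly. The "only if" direction is trivial, so the content is in the "if" direction: assuming $\varphi^d_{P,K}$ is an isomorphism for $P$ projective in $\cB^0$ and $K\in\cB^0$, I want to conclude extension fullness of $\cB$ in $\cA$.

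First I would observe that, since $\cB$ is the Serre subcategory of $\cA$ generated by the objects of $\cB^0$, every simple object of $\cB$ is a subquotient of an object of $\cB^0$, and in fact the simple objects of $\cB$ are precisely the simple objects appearing in $\cB^0$. In light of Lemma~\ref{5lem1}, it suffices to prove that $\varphi^d_{L,L'}$ is an isomorphism for all simple $L,L'\in\cB$ and all $d$. The idea is to bootstrap from the hypothesis on projectives to all objects of $\cB^0$, and then down to simple objects. I would first upgrade the hypothesis: for \emph{arbitrary} $M\in\cB^0$ (not necessarily projective) and $K\in\cB^0$, the map $\varphi^d_{M,K}$ is an isomorphism. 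To do this, pick a projective $P\tto M$ in $\cB^0$ with kernel $M'$; since $\cB^0$ has enough projectives, this gives a short exact sequence $M'\hookrightarrow P\tto M$. Iterating produces a projective resolution of $M$ in $\cB^0$, and the key point is that this same sequence is exact in both $\cB$ and $\cA$ (the inclusions are exact). One then runs the long exact sequences \eqref{resultBuch} for both $\cB$ and $\cA$, connected by the chain map $\varphi^\bullet$ of Remark~\ref{remBuchphi}, and applies the Five Lemma together with induction, exactly in the style of Lemma~\ref{5lem1}, but now doing a dimension shift rather than a length induction.

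The main technical point, which I expect to be the principal obstacle, is the dimension-shifting argument that promotes the hypothesis from projective objects to arbitrary objects of $\cB^0$. The difficulty is that $\cB^0$ is only assumed to have enough projectives, not to be closed under the relevant operations inside $\cB$ or $\cA$, so I must be careful that the syzygy $M'$ stays in $\cB^0$ and that $\Ext^d_{\cB^0}$ agrees with $\Ext^d_{\cB}$ on these objects. I would handle this by working with $\Hom_{\cB}(P_\bullet,K)$, where $P_\bullet\to M$ is a projective resolution in $\cB^0$: because the $P_i$ are projective in $\cB^0$ but not necessarily in $\cB$, I cannot simply read off $\Ext^d_{\cB}(M,K)$ as cohomology of this complex. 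Instead the clean route is purely homological: use the hypothesis $\Ext^d_{\cB}(P,K)\cong\Ext^d_{\cA}(P,K)$ together with the two long exact sequences attached to $M'\hookrightarrow P\tto M$, and induct on $d$. For $d=0$ the isomorphism $\varphi^0$ holds since $\cB$ is full; for the inductive step the Five Lemma applied to the connecting maps in \eqref{resultBuch} yields $\varphi^d_{M,K}$ from $\varphi^d_{P,K}$, $\varphi^{d-1}_{M',K}$ and $\varphi^d_{M',K}$, provided $M'$ again lies in $\cB^0$, which is guaranteed by choosing the projective cover inside $\cB^0$.

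Once $\varphi^d_{M,K}$ is known to be an isomorphism for all $M,K\in\cB^0$, I would descend to simple objects. Every simple $L\in\cB$ is a subquotient of some $M\in\cB^0$, and every simple $L'\in\cB$ likewise embeds into the socle series of some object of $\cB^0$; using that $\cB$ is a Serre subcategory, I can realize both $L$ and $L'$ through short exact sequences whose middle and outer terms are built from $\cB^0$. Running the long exact sequences \eqref{resultBuch} once more and invoking the Five Lemma and Remark~\ref{remBuchphi}, I obtain that $\varphi^d_{L,L'}$ is an isomorphism for all simple $L,L'\in\cB$ and all $d\in\mN$. By Lemma~\ref{5lem1} this establishes that $\cB$ is extension full in $\cA$, completing the proof.
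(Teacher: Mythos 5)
There is a genuine gap at the heart of your dimension-shifting step, and it is precisely the difficulty that the paper's own proof is built to circumvent. For $M\in\cB^0$ with $M'\hookrightarrow P\tto M$, the relevant segment of the two long exact sequences \eqref{resultBuch} (in $\cB$ and in $\cA$, connected by the chain map of Remark~\ref{remBuchphi}) is
\begin{displaymath}
\Ext^{d-1}(P,K)\to\Ext^{d-1}(M',K)\to\Ext^{d}(M,K)\to\Ext^{d}(P,K)\to\Ext^{d}(M',K),
\end{displaymath}
and the Five Lemma needs, besides the isomorphisms $\varphi^{d-1}_{P,K}$, $\varphi^{d}_{P,K}$ (hypothesis) and $\varphi^{d-1}_{M',K}$ (induction), that $\varphi^{d}_{M',K}$ be a \emph{monomorphism}. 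You list this map among your inputs, but your induction on $d$ provides nothing in degree $d$ for the non-projective syzygy $M'$: knowing $M'\in\cB^0$ is irrelevant, since the behaviour of $\varphi^d$ on arbitrary objects of $\cB^0$ is exactly what is being proved, so the argument as written is circular. Nor can you shift the problem away: $P$ is projective only in $\cB^0$, not in $\cB$ or $\cA$ (in the paper's application $\cB^0=\cW$, $\cB=\cW^\infty$, $\cA=\fg$-mod), so $\Ext^d_{\cB}(P,K)$ need not vanish and there is no isomorphism $\Ext^d_{\cB}(M,K)\cong\Ext^{d-1}_{\cB}(M',K)$. The paper closes this circle with a two-pass induction: first, applying the Four Lemma to the segment ending in $\Ext^d(P,K)$ (where only $\varphi^{d-1}_{P,K}$ epi, $\varphi^{d-1}_{M',K}$ mono, $\varphi^{d}_{P,K}$ mono are needed), it proves that $\varphi^{d}_{C,K}$ is a monomorphism for \emph{all} $C,K\in\cB^0$; only then, with the degree-$d$ monomorphism now available for the syzygy, a second application of the Four Lemma yields surjectivity. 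Some such device is indispensable; a single Five Lemma pass does not close.

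Your outer reduction is also unavailable. Lemma~\ref{5lem1} assumes that every object of $\cB$ has finite length; this is not a hypothesis of the present lemma and fails in the intended application: already $U(\fg)\otimes_{U(\fh)}\mC_\lambda$, and more generally the projectives $U(\fg)\otimes_{U(\fh)}V_{\lambda,k}$ of $\cW^k$ used throughout Section~\ref{secO}, have infinite length, so objects of $\cB$ need not admit composition series and extension fullness cannot be tested on simple objects; your claim that the simples of $\cB$ are ``the simple objects appearing in $\cB^0$'' breaks down for the same reason. The correct final step runs in the opposite direction to yours: once $\varphi^{d}_{M,K}$ is an isomorphism for all $M,K\in\cB^0$, one inducts on the length of a filtration with subquotients in $\cB^0$ --- which every object of $\cB$ possesses, $\cB$ being the Serre subcategory generated by $\cB^0$ --- running the Five Lemma argument of Lemma~\ref{5lem1} with $\cB^0$-quotients in place of simple quotients, in both arguments. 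In short: build up from $\cB^0$ to $\cB$, rather than descend to simples.
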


\begin{proof}
The ``only if'' statement is clear, so we prove  the ``if'' statement.

We start by proving, by induction on $d$, that $\varphi^d_{M,K}$ is always a monomorphism for arbitrary $M,K\in\cB^0$. Since $\cB$ is a Serre subcategory of $\cA$, $\varphi^1$ is an isomorphism. Now we assume that $\varphi^i$ (restricted to $\cB^0$) is a monomorphism for $i<d$. Take arbitrary $C,K\in \cB^0$, then there is a $P$, projective in $\cB^0$, such that there is a short exact sequence $X\hookrightarrow P\tto C$ for some $X\in\cB^0$. From \eqref{resultBuch} 
and Remark~\ref{remBuchphi} we have the following commutative diagram with exact rows:
\begin{displaymath}
\xymatrix{ 
\Ext^{d-1}_{\cB}(P,K)\ar[r]\ar[d]_{\varphi^{d-1}_{P,K}}
&\Ext^{d-1}_{\cB}(X,K)\ar[r]\ar[d]_{\varphi^{d-1}_{X,K}}
&\Ext^d_{\cB}(C,K)\ar[r]\ar[d]_{\varphi^d_{C,K}}
& \Ext^d_{\cB}(P,K)\ar[d]_{\varphi^d_{P,K}}\\
\Ext^{d-1}_{\cA}(P,K)\ar[r]&\Ext^{d-1}_{\cA}(X,K)\ar[r]&\Ext^d_{\cA}(C,K)\ar[r]& \Ext^d_{\cA}(P,K)\\
}
\end{displaymath}
Now, by assumption, $\varphi^{d-1}_{P,K}$ and $\varphi^d_{P,K}$ are isomorphisms and from the 
induction step $\varphi^{d-1}_{X,K}$ is a monomorphism. The Four Lemma (see e.g \cite[Lemma~I.3.3(i)]{Maclane})
therefore implies that $\varphi^d_{C,K}$ is injective, for arbitrary $C,K\in\cB^0$. 

Now we prove, by induction on $d$, that $\varphi^d_{C,K}$ is actually an isomorphism. Assume $\varphi^i$ is an isomorphism for $i<d$ and consider $P$ and $X$ as in the paragraph above. From \eqref{resultBuch} 
and Remark~\ref{remBuchphi} we have the following commutative diagram with exact rows:
\begin{displaymath}
\xymatrix{ 
\Ext^{d-1}_{\cB}(X,K)\ar[r]\ar[d]_{\varphi^{d-1}_{X,K}}
&\Ext^{d}_{\cB}(C,K)\ar[r]\ar[d]_{\varphi^{d}_{C,K}}
&\Ext^d_{\cB}(P,K)\ar[r]\ar[d]_{\varphi^d_{P,K}}
& \Ext^d_{\cB}(X,K)\ar[d]_{\varphi^d_{X,K}}\\
\Ext^{d-1}_{\cA}(X,K)\ar[r]&\Ext^{d}_{\cA}(C,K)\ar[r]&\Ext^d_{\cA}(P,K)\ar[r]& \Ext^d_{\cA}(X,K)\\
}
\end{displaymath}
As $\varphi^{d-1}_{X,K}$ is a bijection by the induction step, $\varphi^d_{P,K}$ is a bijection by assumptions, and $\varphi^d_{X,K}$ is a monomorphism by the previous paragraph, the Four Lemma implies that $\varphi^d_{C,K}$ is an epimorphism.

By assumptions, any module in $\cB$ has a finite filtration with quotients in $\cB^0$. The claim of the
lemma now follows using the same argument as in the proof of Lemma~\ref{5lem1}.
\end{proof}

The following result is a special case of Lemma \ref{5lem2}, but we provide an alternative proof, which is
of interest in its own right. Note also the connection with Theorem 3.9 of \cite{Ps}.

\begin{corollary}\label{genprop}
Let $\cA$ and $\cB$ be as above and assume that
they both have enough projective objects. If every projective object in $\cB$ is acyclic 
for the functor $\Hom_{\cA}(-,K)$ for any $K\in\cB$, then $\cB$ is extension full in $\cA$.
\end{corollary}

\begin{proof}
Consider $N\in\cB$ fixed. We need to prove that the functor $\Ext^j_{\cA}(-,N)$, restricted to category $\cB$, is isomorphic to $ \Ext^j_{\cB}(-,N)$. We have the obvious isomorphism
$$\Hom_{\cB}(-,N)\cong\Hom_{\cA}(-,N)\circ \iota,$$
of functors from the category $\cB$ to the category $\mathbf{Sets}$.

By assumption, the exact functor $\iota$ maps projective modules in $\cB$ to acyclic modules for the functor $\Hom_{\cA}(-,N)$. The classical Grothen\-dieck spectral sequence, see \cite[Section~5.8]{Weibel}, therefore implies the theorem.
\end{proof}

Now we consider an extra abelian category $\cC$, for which $\cA$ (and therefore also $\cB$) is a full subcategory
with exact inclusion. Denote by $\cA^\infty$ the Serre subcategory of $\cC$ generated by objects of $\cA$. Furthermore we denote by $\cA^k$, with $k\in\mN$, the subcategory of $\cA^\infty$ of objects which have a filtration of length $k$ with quotients inside $\cA$, then $\displaystyle \cA^\infty=\bigcup_{k\in\mN}\cA^k$. We define similarly $\cB^k$ and $\cB^\infty$ as subcategories in $\cC$, which are automatically subcategories of $\cA^k$ and $\cA^\infty$, respectively.

We show how the Yoneda extension groups in $\cA^\infty$, see Subsection~\ref{secPrel.1}, can be seen as a direct limit of the corresponding extension groups for~$\cA^k$.
\begin{proposition}
\label{extinfty}
\begin{enumerate}[$($i$)$]
\item\label{extinfty.1} For $M,N\in\cA^\infty$, $\Ext^d_{\cA^\infty}(M,N)$ corresponds the space of exact sequences
$$0\to N\to X_1\to X_2\to\cdots\to X_d\to M\to 0,$$
where all modules $M,N,X_1,\cdots,X_d$ are contained in $\cA^k$ for some~$k$. Two such exact sequences are equivalent if and only if they represent the same extension in $\cA^l$ for some  $l\geq k$.
\item\label{extinfty.2} The extension groups $\Ext^d_{\cA^\infty}(M,N)$ where $M$ and $N$ are taken in some
$\cA^{k_0}\subset\cA^\infty$ correspond to the limit of the directed system
$$\Ext^d_{\cA^{k}}(M,N)\to \Ext^d_{\cA^{k+1}}(M,N),\qquad k\geq k_0,$$
where these morphism are, in general, neither injective nor surjective.
\end{enumerate}
\end{proposition}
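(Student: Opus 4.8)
The plan is to exploit the fact that every datum entering the Yoneda definition of $\Ext_{\cA^\infty}$ — a single exact sequence, and a single chain of elementary equivalences — involves only finitely many objects of $\cA^\infty$, together with the fact that $\cA^\infty=\bigcup_k\cA^k$ is an increasing union. Here one first records the elementary observation that $\cA^k\subseteq\cA^{k+1}$, since a filtration of length $k$ extends to one of length $k+1$ by padding with a zero quotient (and $0\in\cA$). Consequently any finite family of objects of $\bigcup_k\cA^k$ lies in a common $\cA^k$, and the whole proposition reduces to this ``compactness'' principle combined with the universal property of the direct limit.

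For part \eqref{extinfty.1} I would start from the Yoneda description of $\Ext^d_{\cA^\infty}(M,N)$ recalled in Subsection~\ref{secPrel.1}. A class is represented by an exact sequence $0\to N\to X_1\to\cdots\to X_d\to M\to 0$ with all $X_i\in\cA^\infty$; each $X_i$, as well as $M$ and $N$, lies in some $\cA^{k_i}$, so taking $k=\max_i k_i$ places the entire sequence in $\cA^k$. This gives the first assertion. For the equivalence statement, recall that $\cX$ and $\cY$ represent the same class precisely when they are linked by a finite chain $\cX=\cZ_0,\cZ_1,\dots,\cZ_n=\cY$ in which each consecutive pair is connected by a commuting ladder (the elementary relation $\sim$). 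Each $\cZ_j$ has finitely many terms and there are finitely many of them, so all objects and all connecting morphisms occurring in the chain involve only finitely many objects of $\cA^\infty$; these lie in a single $\cA^l$ with $l\geq k$. Since $\cA^l$ is a full subcategory, the connecting morphisms are genuine morphisms of $\cA^l$ and the ladders commute there as well, whence $\cX$ and $\cY$ already represent the same extension in $\cA^l$. The converse is immediate from $\cA^l\subseteq\cA^\infty$.

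For part \eqref{extinfty.2} the exact inclusions $\cA^k\hookrightarrow\cA^{k+1}\hookrightarrow\cA^\infty$ induce, via the canonical maps of Subsection~\ref{secPrel.2}, a compatible family $\Ext^d_{\cA^k}(M,N)\to\Ext^d_{\cA^\infty}(M,N)$ and hence a canonical homomorphism $\varinjlim_k\Ext^d_{\cA^k}(M,N)\to\Ext^d_{\cA^\infty}(M,N)$; it suffices to show this is an isomorphism. I would use the standard criterion that the canonical map out of a directed colimit of abelian groups onto a target $E$ is an isomorphism iff every element of $E$ lifts to some stage and every element of a stage that maps to $0$ in $E$ already maps to $0$ at a later stage. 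Surjectivity is exactly the first assertion of \eqref{extinfty.1}: each class in $\Ext^d_{\cA^\infty}$ is represented by a sequence with terms in some $\cA^k$, hence comes from $\Ext^d_{\cA^k}$. For the kernel condition, a class $\xi\in\Ext^d_{\cA^k}$ mapping to $0$ in $\Ext^d_{\cA^\infty}$ is equivalent in $\cA^\infty$ to the trivial sequence; by the equivalence statement of \eqref{extinfty.1} this equivalence is realized already in some $\cA^l$, so $\xi$ vanishes in $\Ext^d_{\cA^l}$ and thus in the colimit. The closing remark that the transition maps need not be injective or surjective plays no role in the isomorphism and is witnessed by the explicit computations later in the paper.

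The main obstacle is the equivalence half of \eqref{extinfty.1}. Its subtlety is that the Yoneda relation is the equivalence relation \emph{generated} by the non-symmetric elementary relation $\sim$, so one cannot descend a single ladder; instead one must observe that every witnessed equivalence is a finite zig-zag and then invoke finiteness of the entire collection of objects and morphisms in that zig-zag to push the whole of it into one $\cA^l$. A secondary point to verify is that the Yoneda group operations and the transition maps $\varphi^d$ are compatible with the inclusions $\cA^k\subseteq\cA^{k+1}$, which again rests on $\cA^l$ being full and on kernels and cokernels being computed inside the ambient $\cC$; once this is in place, the colimit identification in \eqref{extinfty.2} is purely formal.
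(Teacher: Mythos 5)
Your proposal is correct and follows essentially the same route as the paper: both rest on the observation that any single exact sequence, and any finite zig-zag of ladders witnessing a Yoneda equivalence, involves only finitely many objects and therefore lands in one $\cA^l$, after which part \eqref{extinfty.2} is a formal consequence. Your write-up is in fact slightly more careful than the paper's, since you make explicit the directedness $\cA^k\subseteq\cA^{k+1}$ and verify the colimit criterion (surjectivity plus the kernel condition) where the paper simply calls \eqref{extinfty.2} a reinterpretation of \eqref{extinfty.1}.
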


\begin{proof}
By definition of $\cA^\infty$, for any finite exact sequence $\cX^\bullet$ of objects in $\cA^\infty$ there must be some finite $k$ such that all appearing modules are objects of $\cA^k$. We denote the minimal such $k$ by $k(\cX^\bullet)$. Now consider two exact sequences $\cX^\bullet$ and $\cY^\bullet$ of length $d+2$ which start with $N$ and end with $M$. By Subsection \ref{secPrel.1} they are equivalent inside $\Ext^d_{\cA^\infty}(M,N)$ if and only if there exists a finite number of exact sequences $\cX_1,\cdots,\cX_{p}$ as above, which have appropriate morphisms between them. When taking 
$$l:=\max\{k(\cX^\bullet);\,k(\cY^\bullet);\, k(\cX_i)\, i=1..p \}$$
we find that $\cX^\bullet\sim\cY^\bullet$ inside $\Ext^d_{\cA^l}(M,N)$. By definition, as soon as  $\cX^\bullet\sim\cY^\bullet$ inside $\Ext^d_{\cA^l}(M,N)$ for some $l$, they are equivalent inside $\Ext^d_{\cA^\infty}(M,N)$. This concludes the proof of part~\eqref{extinfty.1}.

Part~\eqref{extinfty.2} is a reinterpretation of part~\eqref{extinfty.1}. That the morphism need not be surjective follows from the fact that even when two exact sequences $\cX^\bullet$ and $\cY^\bullet$ as above with all objects in $\cA^k$ are not equivalent in $\Ext^d_{\cA^k}(M,N)$, there can be a third exact sequence $\cZ^\bullet$ with objects in $\cA^{k+1}$ such that $\cX^\bullet\sim \cZ^\bullet$ and $\cY^\bullet\sim \cZ^\bullet$ and hence $\cX^\bullet$ becomes equivalent to $\cY^\bullet$ inside $\Ext^d_{\cA^{k+1}}(M,N)$. On the other hand, there can also be an exact sequence $\cW^\bullet$ with objects in $\cA^{k+1}$ which is not equivalent (inside $\Ext^d_{\cA^{k+1}}(M,N)$) to any exact sequence with objects in $\cA^k$ implying the morphism is not necessarily surjective.
\end{proof}

\begin{corollary}
\label{inftyabstract}
Consider abelian categories $\cB\subset\cA\subset\cC$, with $\cB^k$ and $\cA^k$ as defined above. If $\cB^k$ is extension full in $\cA^k$ for each $k$, then $\cB^\infty$ is extension full in $\cA^\infty$.
\end{corollary}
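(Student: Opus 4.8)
The plan is to realise every extension group in sight as a direct limit by means of Proposition~\ref{extinfty}, and then to invoke the fact that a direct limit of isomorphisms is an isomorphism. Fix $M,N\in\cB^\infty$. As $\cB^\infty=\bigcup_k\cB^k$, there is a $k_0$ with $M,N\in\cB^{k_0}$; since $\cB^{k_0}\subset\cA^{k_0}$ we also have $M,N\in\cA^{k_0}$. Applying Proposition~\ref{extinfty}\eqref{extinfty.2} once to the chain $\cB\subset\cB^\infty\subset\cC$ and once to $\cA\subset\cA^\infty\subset\cC$ gives, for every $d\in\mN$, identifications
\[
\Ext^d_{\cB^\infty}(M,N)\cong\varinjlim_{k\geq k_0}\Ext^d_{\cB^k}(M,N),\qquad
\Ext^d_{\cA^\infty}(M,N)\cong\varinjlim_{k\geq k_0}\Ext^d_{\cA^k}(M,N).
\]

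Next I would organise the hypothesis into a morphism of directed systems. For each $k\geq k_0$ let $\varphi^d_k\colon\Ext^d_{\cB^k}(M,N)\to\Ext^d_{\cA^k}(M,N)$ be the canonical comparison map; it is an isomorphism because $\cB^k$ is extension full in $\cA^k$. By the explicit description in Proposition~\ref{extinfty}, both the transition maps of the two directed systems and the maps $\varphi^d_k$ act by reinterpreting the equivalence class of a given exact sequence inside a larger ambient category. Two such reinterpretations performed in either order land on the class computed in the largest category involved, so the squares
\begin{displaymath}
\xymatrix{
\Ext^d_{\cB^k}(M,N)\ar[r]\ar[d]_{\varphi^d_k}&\Ext^d_{\cB^{k+1}}(M,N)\ar[d]^{\varphi^d_{k+1}}\\
\Ext^d_{\cA^k}(M,N)\ar[r]&\Ext^d_{\cA^{k+1}}(M,N)
}
\end{displaymath}
commute, and $(\varphi^d_k)_{k\geq k_0}$ is a morphism of directed systems of abelian groups.

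Since filtered colimits in the category of abelian groups are exact, they carry a levelwise isomorphism to an isomorphism (with inverse the colimit of the inverse maps); hence $\varinjlim_k\varphi^d_k$ is an isomorphism between the two colimits, that is, between $\Ext^d_{\cB^\infty}(M,N)$ and $\Ext^d_{\cA^\infty}(M,N)$. The one step demanding care --- and the point I expect to be the main, if still routine, obstacle --- is to check that this colimit map is exactly the canonical comparison map $\varphi^d_{M,N}\colon\Ext^d_{\cB^\infty}(M,N)\to\Ext^d_{\cA^\infty}(M,N)$, and not merely some abstract isomorphism. This is settled by tracing through Proposition~\ref{extinfty}\eqref{extinfty.1}--\eqref{extinfty.2}: a class on the left is represented by an exact sequence with all terms in some $\cB^k$, and both $\varinjlim_k\varphi^d_k$ and $\varphi^d_{M,N}$ send it to the class of that very sequence regarded in $\cA^\infty$. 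Once this identification is established, $\varphi^d_{M,N}$ is an isomorphism for all $M,N\in\cB^\infty$ and all $d\in\mN$, which is precisely the assertion that $\cB^\infty$ is extension full in $\cA^\infty$.
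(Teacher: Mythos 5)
Your proof is correct and takes essentially the same approach as the paper: both realise the two Ext groups as direct limits via Proposition~\ref{extinfty}\eqref{extinfty.2} and feed in the levelwise isomorphisms $\varphi^d_k$. The paper merely packages the colimit comparison as two explicit statements about representative exact sequences (every nontrivial class in $\cA^\infty$ has a representative in $\cB^\infty$, and nontrivial classes in $\cB^\infty$ stay nontrivial), which is exactly what your morphism-of-directed-systems argument amounts to element by element.
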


\begin{proof}
To prove the isomorphism 
$$\Ext_{\cB^\infty}^d(M,N)\cong \Ext_{\cA^\infty}^d(M,N) $$
for every $M,N\in\cB^\infty$ and $d\in\mathbb{N}$, we need to prove two statements according to 
Proposition~\ref{extinfty}(ii). 

{\bf Statement I.} Every exact sequence of the form \eqref{eq531},
where all modules are contained in some $\cA^k$ and which is not a trivial extension in any of the categories $\cA^l$ for $l\ge k$, is equivalent to an extension in $\cB^\infty$.

{\bf Statement II.}  Every exact sequence of the form \eqref{eq531},
where all modules are contained in some $\cB^k$ and which is not a trivial extension in any of the categories $\cB^l$ for $l\ge k$, does not become a trivial extension in $\cA^\infty$.

We prove Statement I. By assumption, the extension given by \eqref{eq531} is equivalent to one in $\cB^k$. Since the same extension is not trivial in $\cA^l$ for an arbitrary $l \ge k$, it is also a non-trivial extension in $\cB^l$. This proves that this extension is equivalent to a non-trivial extension in $\cB^\infty$. Statement~II is proved similarly.
\end{proof}

We conclude this section with the observation that the condition of extension fullness is equivalent to a seemingly stronger condition. Note that this property does not require the existence of projective or injective objects.
\begin{proposition}
The full abelian subcategory $\cB$ of $\cA$ with exact inclusion $\imath$ is extension full if and only if $\imath$ induces a full and faithful exact (i.e. triangulated) functor
$$\imath:\;\cD^b(\cB)\,\to\,\cC^b(\cA).$$
\end{proposition}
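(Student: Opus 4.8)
The plan is to read the two sides of the equivalence through Verdier's isomorphism \eqref{eqVerdier} and then reduce full faithfulness to a dévissage over the triangulated structure. First note that, since the inclusion $\imath\colon\cB\to\cA$ is exact, it sends short exact sequences to short exact sequences and so induces a triangulated (``exact'') functor $\cD^b(\cB)\to\cD^b(\cA)$; thus the exactness part of the statement is automatic and the entire content is full faithfulness. The bridge with Subsection~\ref{secPrel.2} is that, for $M,N\in\cB$ placed in cohomological degree $0$, \eqref{eqVerdier} gives $\Ext^i_{\cB}(M,N)\cong\Hom_{\cD^b(\cB)}(M,N[i])$ and $\Ext^i_{\cA}(M,N)\cong\Hom_{\cD^b(\cA)}(M,N[i])$, and these identifications are natural with respect to the exact inclusion, so the canonical comparison map $\varphi^i_{M,N}$ is carried to the map induced by $\imath$ on derived morphism spaces. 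Granting this naturality, the implication ``fully faithful $\Rightarrow$ extension full'' is immediate: restricting $\imath$ to complexes concentrated in degree $0$ shows each $\Hom_{\cD^b(\cB)}(M,N[i])\to\Hom_{\cD^b(\cA)}(M,N[i])$ is an isomorphism, which is exactly $\varphi^i_{M,N}$ being an isomorphism.

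The converse is the substance. First I would record that $\cD^b(\cB)$ is generated, as a triangulated category, by the objects of $\cB$ sitting in degree $0$: via the brutal truncations, every bounded complex sits in distinguished triangles whose other vertices are shorter complexes and shifts of its (finitely many) terms, so an induction on the length of the complex places it in the smallest strictly full triangulated subcategory containing $\cB$ in degree $0$. Extension fullness, read through the isomorphisms above, says precisely that $\imath$ induces isomorphisms $\Hom_{\cD^b(\cB)}(G,G'[n])\to\Hom_{\cD^b(\cA)}(\imath G,\imath G'[n])$ for all $G,G'\in\cB$ in degree $0$ and all $n\in\mZ$ (for $n<0$ both sides vanish, so nothing is required there).

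The dévissage then proceeds in two stages. Fix $G\in\cB$ in degree $0$ and let $\cS_G$ be the full subcategory of those $Y\in\cD^b(\cB)$ for which $\imath$ induces an isomorphism $\Hom_{\cD^b(\cB)}(G,Y[n])\to\Hom_{\cD^b(\cA)}(\imath G,\imath Y[n])$ for \emph{every} $n\in\mZ$. By the reformulation above, $\cS_G$ contains every object of $\cB$ in degree $0$; it is closed under shifts of $Y$ because the defining condition ranges over all $n$; and it is closed under cones, since a distinguished triangle $Y'\to Y\to Y''\to Y'[1]$ with $Y',Y''\in\cS_G$ yields, after applying $\Hom_{\cD^b(\cB)}(G,-)$ and $\Hom_{\cD^b(\cA)}(\imath G,-)$ and using that $\imath$ is triangulated, a morphism between the associated long exact sequences to which the Five Lemma applies. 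Hence $\cS_G=\cD^b(\cB)$. In the second stage I fix an arbitrary $Y\in\cD^b(\cB)$ and let $\cS^Y$ consist of those $X\in\cD^b(\cB)$ for which $\Hom_{\cD^b(\cB)}(X[n],Y)\to\Hom_{\cD^b(\cA)}(\imath X[n],\imath Y)$ is an isomorphism for every $n$. The first stage shows $\cS^Y$ contains $\cB$ in degree $0$, it is closed under shifts by construction, and the same Five Lemma argument in the contravariant variable shows it is closed under cones; thus $\cS^Y=\cD^b(\cB)$, and taking $n=0$ yields full faithfulness.

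The main obstacle I anticipate is the naturality bookkeeping of the first paragraph: one must verify that the abstract Yoneda comparison map $\varphi^d$ of Subsection~\ref{secPrel.2} genuinely coincides, under \eqref{eqVerdier}, with the map induced by $\imath$ on derived Hom spaces, so that extension fullness feeds correctly into the dévissage. The remaining ingredients are routine once this is in place: the generation statement together with boundedness guarantees the inductions terminate, and the closure of $\cS_G$ and $\cS^Y$ under cones is a direct Five Lemma application to the long exact sequences attached to distinguished triangles. No projective or injective objects are needed anywhere, which is exactly the point of phrasing the criterion at the level of the derived categories.
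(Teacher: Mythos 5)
Your proposal is correct and follows essentially the same route as the paper: both translate extension fullness through Verdier's isomorphism \eqref{eqVerdier} into full faithfulness on complexes concentrated in a single position, and then extend to all of $\cD^b(\cB)$ by a d\'evissage in each variable, replacing short exact sequences by distinguished triangles and applying the Five Lemma to the resulting long exact sequences, using that $\cD^b(\cB)$ is generated as a triangulated category by objects of $\cB$ placed in degree $0$. Your write-up merely makes explicit the details the paper leaves as ``similar to Lemma~\ref{5lem1}'', including the naturality of the Verdier identification and the two-stage (one variable at a time) induction.
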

\begin{proof}
By equation \eqref{eqVerdier}, extension fullness is equivalent to the condition that $\imath$ induces an isomorphism
$$\imath:\;\Hom_{\cD^b(\cB)}(\cX^\bullet,\cY^\bullet)\,\;\tilde\to\;\,\Hom_{\cD^b(\cA)}(\imath(\cX^\bullet),\imath(\cY^\bullet))$$
for all complexes $\cX^\bullet,\cY^\bullet$ in $\cD^b(\cB)$, each of which is concentrated
in a single position. It hence suffices to prove that if this displayed equation is true for all such
complexes, then it is true for arbitrary complexes in $\cD^b(\cB)$. This can be proved similarly to Lemma~\ref{5lem1}. It suffices to replace short exact sequences by distinguished triangles, using the long exact sequences in III.1.2.5 (see also Proposition II.1.2.1) of \cite{Ve} or Example 10.2.8 of \cite{Weibel} and note that the bounded derived category is generated, as a triangulated category, by complexes concentrated
in a single position. 
\end{proof}

%%%%%%%%%%%%%%%%%%%%%%%%%%%%%%%%%%%%%%%%%%%%%%%%%%%%%%%%%%%%%%%%%%%%%%%%%%%%%%%%%%%%%%

\section{Category $\cO$ and weight modules}
\label{secO}
The main result of this section is stated in the following theorem.

\begin{theorem}\label{main}
Let $\fg$ be a semisimple finite dimensional complex Lie algebra and $I'$ 
an ideal of finite codimension in $S(\fh)_{(0)}$.
\begin{enumerate}[$($i$)$]
\item\label{main.1}  The category $\cO^I$ is extension full in $\cW^I$.
\item\label{main.2}  The category $\cO^\infty$ is extension full in $\cW^\infty$.
\end{enumerate}
\end{theorem}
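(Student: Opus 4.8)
The plan is to establish \eqref{main.1} and then deduce \eqref{main.2} from it by a direct-limit argument. For the reduction of the second statement I would use that $\cO^\infty=\bigcup_I\cO^I$ and $\cW^\infty=\bigcup_I\cW^I$, and that for $I=\fm^k$ the categories $\cO^I$ and $\cW^I$ are precisely the subcategories of objects admitting a filtration of length $k$ with subquotients in $\cO=\cO^{\fm}$, respectively $\cW=\cW^{\fm}$, inside $\cC=\cW^\infty$. Thus, granting \eqref{main.1} for every $I$ (in particular for $I=\fm^k$), Corollary~\ref{inftyabstract} applied with $\cB=\cO$, $\cA=\cW$ and $\cC=\cW^\infty$ yields \eqref{main.2} at once. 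The whole problem therefore reduces to \eqref{main.1}.

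For \eqref{main.1}, I would first observe that $\cO^I$ is a Serre subcategory of $\cW^I$: local $U(\fb)$-finiteness is inherited by submodules and quotients and is preserved under extensions. Moreover $\cO^I$ has enough projective objects. I would then apply Lemma~\ref{5lem2} with $\cB^0=\cB=\cO^I$, whose two hypotheses hold because $\cO^I$ is its own Serre closure in $\cW^I$ and has enough projectives. The lemma reduces extension fullness to proving that $\varphi^d_{P,K}\colon\Ext^d_{\cO^I}(P,K)\to\Ext^d_{\cW^I}(P,K)$ is an isomorphism for every projective $P$ of $\cO^I$ and every $K\in\cO^I$.

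Next I would run a dévissage replacing the projective $P$ by a standard (thick Verma) module $M^I(\lambda)=U(\fg)\otimes_{U(\fb)}\bigl(S(\fh)/I\otimes\mC_\lambda\bigr)$. Every projective of $\cO^I$ carries a finite filtration whose subquotients are such standard modules, and the inclusion induces a chain map between the long exact sequences \eqref{resultBuch} for $\cO^I$ and for $\cW^I$ (Remark~\ref{remBuchphi}). Exactly as in the proof of Lemma~\ref{5lem1}, an iterated application of the Five Lemma then shows that it is enough to prove that $\varphi^d_{M^I(\lambda),K}$ is an isomorphism for every $\lambda$ and every $K\in\cO^I$; that is, that $\cO^I$ and $\cW^I$ compute the same extension groups out of standard modules.

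The core of the argument, and the step I expect to be the main obstacle, is this comparison on standard modules. I would attack it through Frobenius reciprocity for the adjoint pair $\mathrm{Ind}_{\fb}^{\fg}\dashv\mathrm{Res}^{\fg}_{\fb}$; both functors are exact, so $\mathrm{Ind}_{\fb}^{\fg}$ preserves projectives and $M^I(\lambda)=\mathrm{Ind}_{\fb}^{\fg}(S(\fh)/I\otimes\mC_\lambda)$. On the side of $\cW^I$ the category has enough projectives, namely the modules $U(\fg)\otimes_{U(\fh)}(S(\fh)/I\otimes\mC_\lambda)$ induced from $\fh$; inducing the relative Chevalley--Eilenberg resolution of $S(\fh)/I\otimes\mC_\lambda$ over $U(\fb)$ relative to $\fh$ gives an explicit projective resolution of $M^I(\lambda)$ in $\cW^I$, and applying $\Hom_{\fg}(-,K)$ together with Frobenius reciprocity identifies $\Ext^\bullet_{\cW^I}(M^I(\lambda),K)$ with the Lie algebra cohomology $H^\bullet(\fn,K)$ in the weight component singled out by $\lambda$. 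One must produce the same identification on the side of $\cO^I$. The difficulty is that this resolution leaves $\cO^I$ entirely, its terms being induced from $\fh$ and hence not locally $U(\fb)$-finite, so the computation cannot simply be repeated. Reducing through the same adjunction, the required equality of extension groups is equivalent to the assertion that the comparison map $\Ext^\bullet_{\cO(\fb)}(\mC_\lambda,\mathrm{Res}^{\fg}_{\fb}K)\to\Ext^\bullet_{\cW(\fb)}(\mC_\lambda,\mathrm{Res}^{\fg}_{\fb}K)$ is an isomorphism, where $\cO(\fb)$ and $\cW(\fb)$ denote the locally $\fn$-finite, respectively all, generalized weight $\fb$-modules. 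Thus everything comes down to showing that $H^\bullet(\fn,K)$ is insensitive to the restriction to locally $\fn$-finite modules. This is where I expect the real work to lie, and where the nilpotency of $\fn$ together with the weight-boundedness and finite length of objects of $\cO^I$ (and Lemma~\ref{algcohom} in the top degree) must be used essentially; granting it, the dévissage of the previous two paragraphs finishes \eqref{main.1}.
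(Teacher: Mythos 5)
Your reductions are sound and in fact run parallel to the paper's own: part (ii) is deduced from part (i) via Corollary~\ref{inftyabstract} exactly as in the paper, and the reduction of part (i) to comparing extensions out of projectives of $\cO^I$ is the paper's Corollary~\ref{genprop} (a special case of Lemma~\ref{5lem2}). The problem is that your argument stops precisely where the content of the theorem lies. The statement you defer --- that extensions out of $S(\fh)/I\otimes\mC_\lambda$ computed in locally $\fn$-finite generalized weight $\fb$-modules agree with those computed in all generalized weight $\fb$-modules, equivalently that $\Ext^\bullet_{\cO^I}(M^I(\lambda),K)\to\Ext^\bullet_{\cW^I}(M^I(\lambda),K)$ is an isomorphism --- is, for $I=\fm$, exactly Delorme's theorem relating $\Ext^\bullet_{\cO}(M(\mu),N)$ to $\fn$-cohomology, which the paper records as Corollary~\ref{cor458} and obtains as a \emph{consequence} of Theorem~\ref{main}, not as an input. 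Since by your own d\'evissage the theorem is equivalent to this comparison on standard modules, ``granting it'' grants the theorem: the proposal contains no argument for the one step that formal d\'evissage cannot supply. There is also a secondary defect in the reduction itself: $\mathrm{Res}^{\fg}_{\fb}$ does not preserve finite generation (already $\mathrm{Res}^{\fg}_{\fb}M(\lambda)$ is not finitely generated over $U(\fb)$), and your category $\cO(\fb)$ contains no projectives of induced form, so derived Frobenius reciprocity on the $\cO$-side is not automatic; note that the paper's Lemma~\ref{lemFrob} only ever invokes adjunction against $\fh$, where the relevant categories have enough projectives.

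What fills the gap in the paper is a truncation trick that avoids $\fb$-homological algebra altogether. Instead of resolving the thick Verma module by $\fh$-induced modules (which, as you observe, leaves $\cO^I$), the paper realizes every projective of $\cO^k$ as a direct summand of $\widetilde{M}_{n,k}(\lambda)$, the block component of $M_{n,k}(\lambda)=U(\fg)\otimes_{U(\fb)}U(\fb)/U(\fb)\fn^n\otimes_{U(\fh)}V_{\lambda,k}$ for $n\gg 0$ (Proposition~\ref{projexp}). By Lemma~\ref{firstly}\eqref{firstly.3}, this module is a quotient of the $\cW^k$-projective module $U(\fg)\otimes_{U(\fh)}V_{\lambda,k}$ by $U(\fg)\fn^n\otimes_{U(\fh)}V_{\lambda,k}$, and the latter submodule admits a projective resolution in $\cW^k$ by modules induced from $\fh$-weights of the form $\lambda+\alpha_1+\cdots+\alpha_p$ with $p\geq n$ and $\alpha_i$ positive roots. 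For $n$ large, such weights occur in no simple module $L(\nu)$ of the block, so $\Hom_{\fg}$ from the entire resolution into $L(\nu)$ vanishes, yielding $\Ext^d_{\cW^k}(\widetilde{M}_{n,k}(\lambda),L(\nu))=0$ for $d>0$ (Lemma~\ref{secondly}, Corollary~\ref{proj}); Corollary~\ref{genprop} then finishes part (i). This weight-positivity argument --- truncating the $\fh$-induced projective so that the error terms are generated in weights too high to interact with category $\cO$ --- is the idea missing from your proposal, and it is what lets the paper prove the acyclicity of projectives directly rather than deducing it from an unproven $\fn$-cohomology comparison.
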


First we note that $\cO^I$ is a Serre subcategory of $\cW^I$ and hence for $M,N\in\cO^I$ we have
\begin{equation}\label{OKdeg1}
\Ext_{\cO^I}^1(M,N)\cong\Ext_{\cW^I}^1(M,N).
\end{equation}
For each $k\in\mN$, we can consider the ideal $I_k=\fh^k S(\fh)$, for which we use the short-hand notation $\cO^k=\cO^{I_k}$ and $\cW^k=\cW^{I_k}$. Then we arrive in the situation described at the end of 
Section~\ref{secExtFull}, with $\displaystyle \cW^\infty=\bigcup_{k\in\mathbb{N}} \cW^k$ and 
$\displaystyle \cO^\infty=\bigcup_{k\in\mathbb{N}}\cO^k$.  For notational 
convenience, we will work with the ideals $I_k$ even though the results hold generally.

For every $k>0$ and $\lambda\in\fh^\ast$, we define the $\fh^k$-module $V_{\lambda,k}$ as
$U(\fh)/J_{\lambda,k}$ where $J_{\lambda,k}$ is the ideal of $U(\fh)$ generated by all elements
of the form $(h_1-\lambda(h_1))(h_2-\lambda(h_2))\cdots(h_k-\lambda(h_k))$ where $h_1,h_2,\dots,h_k\in\fh$.
Further, for $n>0$, we define the $\fg$-module
$$M_{n,k}(\lambda):= 
U(\fg)\,\bigotimes_{U(\fb)}\,U(\fb)/U(\fb)\fn^n\,\bigotimes_{U(\fh)}\, V_{\lambda,k}.$$
The following lemma is immediate from the definition.

\begin{lemma}\label{firstly}
\begin{enumerate}[$($i$)$]
\item\label{firstly.1}
We have $M_{n,k}(\lambda)\in \cO^k$.
\item\label{firstly.2}
Both $M_{n+1,k}(\lambda)$ and $M_{n,k+1}(\lambda)$ surject onto $M_{n,k}(\lambda)$. Furthermore, the module
$M_{1,1}(\lambda)$ is isomorphic to the classical Verma module $M(\lambda)$ with highest weight $\lambda$.
\item\label{firstly.3}
There is the following short exact sequence:
\begin{displaymath}
U(\fg)\fn^n\bigotimes_{U(\fh)} V_{\lambda,k}\hookrightarrow U(\fg)\bigotimes_{U(\fh)}V_{\lambda,k}\tto M_{n,k}(\lambda).  
\end{displaymath}
\end{enumerate}
\end{lemma}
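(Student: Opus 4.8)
The plan is to rewrite $M_{n,k}(\lambda)$ as a module induced from a finite-dimensional $\fb$-module and then deduce all three statements from the standard freeness properties of $U(\fg)$ over $U(\fb)$ and over $U(\fh)$ provided by the PBW theorem. Write $W:=V_{\lambda,k}=U(\fh)/J_{\lambda,k}$; since $J_{\lambda,k}$ is precisely the $k$-th power $\fm_\lambda^k$ of the maximal ideal $\fm_\lambda=(h-\lambda(h)\mid h\in\fh)$ of $S(\fh)=U(\fh)$, the module $W$ is finite-dimensional. Observe that $\fn^n$ is stable under the adjoint action of $\fb$, so $U(\fb)\fn^n$ is a sub-bimodule of the $(U(\fb),U(\fh))$-bimodule $U(\fb)$, and likewise $U(\fg)\fn^n$ is a sub-bimodule of the $(U(\fg),U(\fh))$-bimodule $U(\fg)$. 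Using associativity of the tensor product and $U(\fg)\otimes_{U(\fb)}\bigl(U(\fb)/U(\fb)\fn^n\bigr)\cong U(\fg)/U(\fg)\fn^n$, one obtains the key identity
\[
M_{n,k}(\lambda)\;\cong\;\bigl(U(\fg)/U(\fg)\fn^n\bigr)\otimes_{U(\fh)}W.
\]

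For part (i), set $E:=\bigl(U(\fb)/U(\fb)\fn^n\bigr)\otimes_{U(\fh)}W$, so that $M_{n,k}(\lambda)=U(\fg)\otimes_{U(\fb)}E$. By PBW the right $U(\fh)$-module $U(\fb)$ is free, and $U(\fb)/U(\fb)\fn^n$ is free of finite rank $\dim U(\fn)/U(\fn)\fn^n$; hence $E$ is a finite-dimensional $\fb$-module, on which $\fn^n$ acts by zero and the nilpotent part of the $\fh$-action has order $<k$ (inherited from $W=S(\fh)/\fm_\lambda^k$). I would then invoke the standard fact that the $\fg$-module induced from a finite-dimensional $\fb$-module is finitely generated, is a weight module with finite-dimensional generalized weight spaces bounded above, and is locally $\fn$-finite because $\fn^n$ kills the generators; together with the order bound on the nilpotent $\fh$-action this gives $M_{n,k}(\lambda)\in\cO^k$.

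Part (ii) is formal. The inclusions $U(\fb)\fn^{n+1}\subseteq U(\fb)\fn^n$ and $J_{\lambda,k+1}=\fm_\lambda^{k+1}\subseteq\fm_\lambda^k=J_{\lambda,k}$ give surjections of the building blocks, and applying the right-exact induction-and-tensor functors produces the two surjections onto $M_{n,k}(\lambda)$. For the last assertion, $V_{\lambda,1}=U(\fh)/\fm_\lambda=\mathbb{C}_\lambda$ and $U(\fb)/U(\fb)\fn\cong U(\fh)$, whence $E\cong\mathbb{C}_\lambda$ and $M_{1,1}(\lambda)=U(\fg)\otimes_{U(\fb)}\mathbb{C}_\lambda=M(\lambda)$.

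For part (iii) I would start from the short exact sequence of $(U(\fb),U(\fh))$-bimodules $0\to U(\fb)\fn^n\to U(\fb)\to U(\fb)/U(\fb)\fn^n\to 0$ and apply $U(\fg)\otimes_{U(\fb)}-$, which is exact since $U(\fg)$ is $U(\fb)$-free; this yields a short exact sequence $0\to U(\fg)\fn^n\to U(\fg)\to U(\fg)/U(\fg)\fn^n\to 0$ of right $U(\fh)$-modules. Tensoring on the right with $W$ over $U(\fh)$ is right-exact and, via the identity of the first paragraph, gives exactly the three terms of the asserted sequence; the only non-formal point is the injectivity of the left-hand map. This is the step I expect to be the main obstacle, and I would resolve it by exhibiting $U(\fg)$ and $U(\fg)\fn^n$ as \emph{free} right $U(\fh)$-modules: ordering a PBW basis of $U(\fg)$ so that the $\fh$-factor comes last and using $\mathrm{ad}\,\fh$-stability of $\fn^n$ to move $\fh$-factors to the right, one identifies $U(\fg)\fn^n$ with the span of those monomials whose $\fn$-part lies in $U(\fn)\fn^n$, so that $U(\fg)\fn^n$, $U(\fg)$ and $U(\fg)/U(\fg)\fn^n$ are all free over $U(\fh)$ with explicit monomial bases. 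Freeness forces $\mathrm{Tor}_1^{U(\fh)}\bigl(U(\fg)/U(\fg)\fn^n,W\bigr)=0$, so $-\otimes_{U(\fh)}W$ preserves the sequence; equivalently one reads off the vector-space decomposition $M_{n,k}(\lambda)\cong U(\ofn)\otimes_{\mathbb{C}}\bigl(U(\fn)/U(\fn)\fn^n\bigr)\otimes_{\mathbb{C}}W$, from which exactness is immediate.
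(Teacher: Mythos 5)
Your proof is correct. The paper offers no argument at all --- it simply states that the lemma ``is immediate from the definition'' --- and your PBW-based verification (rewriting $M_{n,k}(\lambda)\cong (U(\fg)/U(\fg)\fn^n)\otimes_{U(\fh)}V_{\lambda,k}$, right-exactness of induction for (i) and (ii), and freeness of $U(\fg)\fn^n$, $U(\fg)$ and $U(\fg)/U(\fg)\fn^n$ as right $U(\fh)$-modules to get the injectivity in (iii)) is exactly the routine unpacking the authors suppressed. The only loose phrase is ``free \ldots with explicit monomial bases'': the subspace $U(\fn)\fn^n$ is not obviously spanned by PBW monomials, but the decomposition $U(\fg)\fn^n\cong U(\ofn)\otimes(U(\fn)\fn^n)\otimes U(\fh)$ that you actually invoke (valid by $\mathrm{ad}(\fh)$-stability of $\fn^n$ and the PBW theorem, with any linear basis of the first two factors) already gives freeness over $U(\fh)$, which is all the Tor-vanishing argument needs.
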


We denote the maximal direct summand of $M_{n,k}(\lambda)$ belonging to the subcategory $\cO^k_{\chi_\lambda}$, by $\widetilde{M}_{n,k}(\lambda)$.  

\begin{lemma}\label{secondly}
For each $d>0$ we have $\Ext^d_{\cW^k}(\widetilde{M}_{n,k}(\lambda),L(\nu))=0$
for all $\nu\in\fh^\ast$ and all  $n\gg 0$.
\end{lemma}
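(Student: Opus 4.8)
The plan is to compute the groups directly from an explicit projective resolution of $M_{n,k}(\lambda)$ inside $\cW^k$, built by induction, and then to pass to the summand $\widetilde M_{n,k}(\lambda)$ at the very end. The starting observation is that for every $\mu\in\fh^\ast$ the module $P_\mu:=U(\fg)\otimes_{U(\fh)}V_{\mu,k}$ is projective in $\cW^k$. Indeed, the tensor--hom adjunction gives $\Hom_{\cW^k}(P_\mu,-)\cong\Hom_{\fh}(V_{\mu,k},\mathrm{Res}_{\fh}(-))$; the restriction functor to the category of generalized weight $\fh$-modules of nilpotency degree $<k$ is exact, and there $V_{\mu,k}\cong S(\fh)/\fm_\mu^k$ is projective, being free of rank one over the local ring governing the $\mu$-weight space. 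I also record the point that will produce the vanishing: $\Hom_{\cW^k}(P_\mu,L(\nu))\cong\Hom_{\fh}(V_{\mu,k},L(\nu)^\mu)$, which is zero whenever $\mu$ is not a weight of $L(\nu)$, in particular whenever $\mu\not\le\nu$.

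Next I would manufacture the resolution. Writing $M_{n,k}(\lambda)=\mathrm{Ind}_{\fb}^{\fg}Y$ with $Y=U(\fb)/U(\fb)\fn^n\otimes_{U(\fh)}V_{\lambda,k}$, I first resolve the finite-dimensional $\fb$-module $Y$ inside the Noetherian category of finitely generated generalized weight $\fb$-modules of nilpotency degree $<k$. Its indecomposable projectives are $Q_\mu:=U(\fb)\otimes_{U(\fh)}V_{\mu,k}\cong U(\fn)\otimes V_{\mu,k}$, on which $\fn$ acts by raising the weight, so that $Q_\mu$ is generated by its lowest weight space, of weight $\mu$. Applying the exact functor $\mathrm{Ind}_{\fb}^{\fg}$ to a projective resolution $\cdots\to\bigoplus_j Q_{\mu_{1,j}}\to Q_\lambda\to Y\to0$ yields a projective resolution of $M_{n,k}(\lambda)$ in $\cW^k$, since $\mathrm{Ind}_{\fb}^{\fg}Q_\mu=P_\mu$. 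The decisive bookkeeping is on weights: because $Y=U(\fn)/U(\fn)\fn^n\otimes V_{\lambda,k}$ is the truncation of $Q_\lambda$ to $\fn$-degrees $<n$, its first syzygy lives in $\fn$-degrees $\ge n$, i.e. in weights $\mu$ with $\mu-\lambda$ a sum of at least $n$ positive roots; and as $\fn$ only raises weights, every syzygy from the first step on is generated in such weights. Hence in the induced resolution every term except $P_\lambda$ is a sum of modules $P_\mu$ with $\langle\mu,\rho^\vee\rangle\ge\langle\lambda,\rho^\vee\rangle+n$.

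The conclusion is then immediate. Applying $\Hom_{\cW^k}(-,L(\nu))$, each term of the $\Hom$-complex in cohomological degree $\ge1$ is a sum of groups $\Hom_{\cW^k}(P_\mu,L(\nu))$ with $\langle\mu,\rho^\vee\rangle\ge\langle\lambda,\rho^\vee\rangle+n$. Once $n>\langle\nu-\lambda,\rho^\vee\rangle$ every such $\mu$ satisfies $\mu\not\le\nu$, so $L(\nu)^\mu=0$ and the corresponding $\Hom$ vanishes; the complex is concentrated in degree $0$ and therefore $\Ext^d_{\cW^k}(M_{n,k}(\lambda),L(\nu))=0$ for all $d>0$. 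Passing to the summand $\widetilde M_{n,k}(\lambda)\in\cO^k_{\chi_\lambda}$ only helps: $\Ext^d_{\cW^k}(\widetilde M_{n,k}(\lambda),L(\nu))$ is a direct summand of the above group, and it vanishes automatically unless $\chi_\nu=\chi_\lambda$. Since only finitely many $\nu$ share the central character $\chi_\lambda$, their heights are bounded, so one value of $n$ works simultaneously for all of them, giving the uniform ``$n\gg0$'' in the statement.

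The step I expect to require the most care is the verification that the induced complex really is a projective resolution in $\cW^k$: one must check that $\mathrm{Ind}_{\fb}^{\fg}$ preserves exactness (clear, since $U(\fg)$ is free over $U(\fb)$) and finite generation (using that $U(\fb)$ is Noetherian), that it carries the chosen $\fb$-projectives $Q_\mu$ to the $\cW^k$-projectives $P_\mu$, and that all syzygies stay within the relevant $\fb$-category of nilpotency degree $<k$. The only genuinely quantitative ingredient is the weight estimate on the syzygies, which is exactly where the hypothesis $n\gg0$ is used; everything else is formal adjunction together with the standard fact that a projective resolution of the first argument, when one is available, computes the Yoneda $\Ext$.
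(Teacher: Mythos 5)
Your proof is correct and follows essentially the same route as the paper's: both reduce to the finitely many $\nu$ with $\chi_\nu=\chi_\lambda$ via central characters, resolve the relevant $\fb$-module by modules $U(\fb)\otimes_{U(\fh)}V_{\mu,k}$ with $\mu$ equal to $\lambda$ plus at least $n$ positive roots, induce this to a projective resolution in $\cW^k$, and obtain vanishing by adjunction to $\fh$ together with the weight bound on $L(\nu)$. The only cosmetic difference is that the paper performs one explicit dimension shift along the short exact sequence of Lemma~\ref{firstly}(iii) and resolves the kernel $U(\fg)\fn^n\otimes_{U(\fh)}V_{\lambda,k}$, whereas you splice that same resolution together with $P_\lambda$ into a single projective resolution of $M_{n,k}(\lambda)$.
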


\begin{proof}
The result is trivial unless $\chi_\nu=\chi_\lambda$, so we assume that $\nu$ is in the Weyl group orbit of $\lambda$. This leaves only a finite amount of choices for $\nu$.

The module $U(\fg)\otimes_{U(\fh)}V_{\lambda,k}$ is projective in $\cW^k$. 
Lemma~\ref{firstly}\eqref{firstly.3} and equation \eqref{resultBuch} therefore imply that we have 
\begin{equation}\label{eqpro} 
\Ext^{d-1}_{\cW^k}\big(U(\fg)\fn^n\otimes_{U(\fh)} V_{\lambda,k}
, L(\nu)\big)\tto \Ext^d_{\cW^k}(M_{n,k}(\lambda), L(\nu)),
\end{equation}
moreover,  this is an isomorphism if $d>1$. 

The $\fb$-module $U(\fb)\fn^n\otimes_{U(\fh)} V_{\lambda,k}$ has a resolution in terms of modules 
$U(\fb)\otimes_{U(\fh)} V_{\mu,k}$, where each $\mu\in\fh^\ast$ is of the form $$\mu=\lambda+\alpha_1+\cdots+\alpha_p,$$ where $p\ge n$ and $\alpha_i$'s are positive roots. This implies that the module
\begin{displaymath}
U(\fg)\fn^n\bigotimes_{U(\fh)} V_{\lambda,k}\cong
U(\fg)\bigotimes_{U(\fb)}U(\fb)\fn^n\bigotimes_{U(\fh)} V_{\lambda,k}
\end{displaymath}
has a projective resolution in $\cW^k$, by modules
$U(\fg)\bigotimes_{U(\fh)} V_{\mu,k}$, with the same condition on~$\mu$. 

According to the above, in order to prove that the left-hand side of equation \eqref{eqpro} is zero for $d>0$, it suffices to show that the space
$$\Hom_{\fg}\left(U(\fg)\otimes_{U(\fh)} V_{\mu,k},L(\nu)\right)\,\cong\, 
\Hom_{\fh}\left(V_{\mu,k},L(\nu)\right)$$
(where the isomorphism is given by adjunction)
is zero, for $\mu$ as above. For each $\nu$, we can find an $n$ large enough, such that all of weights $\mu$ 
of the above form do not appear in $L(\nu)$. Taking the maximum over this finite set of numbers yields the lemma.
\end{proof}

For every $\mu\in\fh^\ast$, denote the projective cover of $L(\mu)$ in $\cO^k$ by $P^k(\mu)$.
Let $W$ be the Weyl group of $\mathfrak{g}$.

\begin{proposition}\label{projexp}
For $n$ large enough, we have
$$\widetilde{M}_{n,k}(\lambda)=\bigoplus_{w\in W} \dim(L(w\cdot\lambda)^\lambda)\, P^k(w\cdot\lambda).$$
\end{proposition}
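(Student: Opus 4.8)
The plan is to prove the statement in two steps: first, that $\widetilde{M}_{n,k}(\lambda)$ is projective in $\cO^k$ for $n\gg0$, and second, that the multiplicity of each $P^k(w\cdot\lambda)$ equals $\dim L(w\cdot\lambda)^\lambda$. Throughout I write $W:=U(\fb)/U(\fb)\fn^n\otimes_{U(\fh)}V_{\lambda,k}$, which is a finite-dimensional $\fb$-module, so that $M_{n,k}(\lambda)=U(\fg)\otimes_{U(\fb)}W$, and I record that $W$ is a quotient of $U(\fb)\otimes_{U(\fh)}V_{\lambda,k}$ with kernel the image of $U(\fb)\fn^n\otimes_{U(\fh)}V_{\lambda,k}$.

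For projectivity I would argue as follows. Since every object of $\cO^k$ has finite length and $\cO^k$ has enough projective objects (the covers $P^k(\mu)$ exist), it suffices to show $\Ext^1_{\cO^k}(\widetilde{M}_{n,k}(\lambda),L(\nu))=0$ for every simple $L(\nu)$: applying \eqref{resultBuch} to a projective cover $P\tto\widetilde{M}_{n,k}(\lambda)$ and performing d\'evissage along the finite-length kernel forces the defining sequence to split, exhibiting $\widetilde{M}_{n,k}(\lambda)$ as a summand of $P$. Because $\cO^k$ is a Serre subcategory of $\cW^k$, equation \eqref{OKdeg1} identifies $\Ext^1_{\cO^k}(\widetilde{M}_{n,k}(\lambda),L(\nu))$ with $\Ext^1_{\cW^k}(\widetilde{M}_{n,k}(\lambda),L(\nu))$, and the latter vanishes for $n\gg0$ by Lemma~\ref{secondly} with $d=1$. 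This yields projectivity.

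It then remains to compute the multiplicities. A projective object of $\cO^k$ decomposes as $\bigoplus_\mu a_\mu P^k(\mu)$ with $a_\mu=\dim\Hom_{\cO^k}(\widetilde{M}_{n,k}(\lambda),L(\mu))$ (using $\Hom_{\cO^k}(P^k(\nu),L(\mu))=\delta_{\nu\mu}\mC$), and since $\widetilde{M}_{n,k}(\lambda)$ lies in the block $\cO^k_{\chi_\lambda}$ only $\mu\in W\cdot\lambda$ can occur. For such $\mu$ the remaining block summands of $M_{n,k}(\lambda)$ admit no nonzero homomorphisms to $L(\mu)$, so $a_\mu=\dim\Hom_{\fg}(M_{n,k}(\lambda),L(\mu))=\dim\Hom_{\fb}(W,L(\mu))$ by Frobenius reciprocity for $U(\fg)\otimes_{U(\fb)}(-)$. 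A further adjunction identifies a $\fb$-map out of $U(\fb)\otimes_{U(\fh)}V_{\lambda,k}$ into $L(\mu)$ with an $\fh$-map $f_0\colon V_{\lambda,k}\to L(\mu)^\lambda$, and such a map descends to the quotient $W$ exactly when $\fn^n$ annihilates $f_0(V_{\lambda,k})$. As $\fn$ raises weights and $L(\mu)^\lambda$ is the finite-dimensional weight-$\lambda$ space, one has $\fn^nL(\mu)^\lambda=0$ once $n$ exceeds the height of $\mu-\lambda$; taking $n$ large uniformly over the finitely many $w$ makes this condition vacuous, whence $\Hom_{\fb}(W,L(\mu))\cong\Hom_{\fh}(V_{\lambda,k},L(\mu)^\lambda)$.

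Finally, $\fh$ acts on the genuine weight space $L(\mu)^\lambda$ through the scalar character $\lambda$, so every $\fh$-map $V_{\lambda,k}\to L(\mu)^\lambda$ factors through $V_{\lambda,1}\cong\mC_\lambda$ (the one-dimensional $\fh$-module of weight $\lambda$) and is determined freely by the image of the generator; hence $\Hom_{\fh}(V_{\lambda,k},L(\mu)^\lambda)\cong L(\mu)^\lambda$ and $a_{w\cdot\lambda}=\dim L(w\cdot\lambda)^\lambda$, as claimed. I expect the projectivity step to be the main obstacle: the extension information is only natively available in $\cW^k$ through Lemma~\ref{secondly}, so it must be transported to $\cO^k$ via the Serre identification \eqref{OKdeg1} in degree one and then promoted from vanishing of $\Ext^1$ on all simples to genuine projectivity using finiteness of length together with the existence of projective covers. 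The multiplicity computation, though it must reconcile two independent largeness requirements on $n$ (one from Lemma~\ref{secondly}, one from the weight estimate $\fn^nL(\mu)^\lambda=0$), is thereafter a routine adjunction calculation.
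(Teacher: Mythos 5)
Your proposal is correct and takes essentially the same approach as the paper: projectivity of $\widetilde{M}_{n,k}(\lambda)$ is deduced from Lemma~\ref{secondly} with $d=1$ together with \eqref{OKdeg1}, and the multiplicities come from Frobenius reciprocity reducing $\Hom_{\fg}(\widetilde{M}_{n,k}(\lambda),L(w\cdot\lambda))$ to $\Hom_{\fh}(V_{\lambda,k},L(w\cdot\lambda)^{\lambda})$, whose dimension is $\dim L(w\cdot\lambda)^{\lambda}$. The only cosmetic difference is that you run the adjunction through $\fb$ and verify the descent condition $\fn^{n}L(\mu)^{\lambda}=0$ directly, whereas the paper reuses the kernel-vanishing computation from the proof of Lemma~\ref{secondly}; both rest on the same weight estimate.
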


\begin{proof}
Lemma~\ref{secondly} for $d=1$ and the isomorphism in  \eqref{OKdeg1} imply that $\widetilde{M}_{n,k}(\lambda)$ is projective in $\cO^k$, for $n$ large enough. Furthermore, from Lemma~\ref{firstly}\eqref{firstly.3} 
and the computation in the proof of Lemma~\ref{secondly} we get 
$$\Hom_{\fg}\big(\widetilde{M}_{n,k}(\lambda),L(w\cdot\lambda)\big)\cong
\Hom_\fh\big(V_{\lambda,k},L(w\cdot\lambda)\big),$$
which concludes the proof.
\end{proof}

\begin{corollary}
\label{proj}
Consider $M\in\cO^I$ and $P$ projective in $\cO^I$. Then for $d>0$ we have
$$\Ext_{\cW^I}^d(P,M)=0.$$
\end{corollary}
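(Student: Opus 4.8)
The plan is to deduce this from Proposition~\ref{projexp} and Lemma~\ref{secondly} by two standard reductions: additivity of $\Ext$ in the first variable, and d\'evissage (induction on length) in the second variable. Throughout I would work with $I=I_k$, writing $\cO^k,\cW^k$ as before; the general case of $\cO^I$ is handled identically, since the constructions and results above hold for an arbitrary ideal $I$.

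First I would reduce to the case where $P$ is indecomposable and $M$ is simple. Every projective $P$ in $\cO^k$ is a finite direct sum of indecomposable projectives $P^k(\mu)$, and $\Ext^d_{\cW^k}(-,M)$ carries finite direct sums in its first argument to finite direct sums, so it suffices to treat $P=P^k(\mu)$. For the second variable I would then induct on the length of $M\in\cO^k$: choosing a simple submodule $L\hookrightarrow M$ with quotient $K=M/L$ of smaller length, the long exact sequence \eqref{resultBuch} in its covariant form (with $P$ fixed in the first argument) reads
$$\cdots\to\Ext^d_{\cW^k}(P,L)\to\Ext^d_{\cW^k}(P,M)\to\Ext^d_{\cW^k}(P,K)\to\cdots,$$
so vanishing of the two outer terms for $d>0$ forces vanishing of the middle one. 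The whole statement thus reduces to showing $\Ext^d_{\cW^k}(P^k(\mu),L(\nu))=0$ for all $d>0$ and all $\mu,\nu\in\fh^\ast$.

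The key observation is that $P^k(\mu)$ occurs as a direct summand of $\widetilde{M}_{n,k}(\mu)$ for $n$ large. Applying Proposition~\ref{projexp} with $\lambda=\mu$, the summand indexed by $w=e$ is $\dim(L(\mu)^\mu)\,P^k(\mu)$; since $\fh$ acts semisimply on the simple module $L(\mu)$ and its highest weight space is one-dimensional, $\dim(L(\mu)^\mu)=1$, so $P^k(\mu)$ genuinely appears. Now, fixing $d>0$ and taking $n$ large enough for both Proposition~\ref{projexp} and Lemma~\ref{secondly}, additivity of $\Ext$ exhibits $\Ext^d_{\cW^k}(P^k(\mu),L(\nu))$ as a direct summand of $\Ext^d_{\cW^k}(\widetilde{M}_{n,k}(\mu),L(\nu))$, which is zero by Lemma~\ref{secondly}. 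As $d>0$ was arbitrary, the desired vanishing follows.

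Since the corollary is essentially an assembly of the preceding results, I do not expect a serious obstacle. The only points requiring care are the order of the quantifiers on $n$ and $d$ --- resolved by noting that the projective module on the right-hand side of Proposition~\ref{projexp} is independent of $n$, so for each fixed $d$ only finitely many threshold conditions on $n$ are imposed --- and the verification that the multiplicity of $P^k(\mu)$ arising from the $w=e$ term is nonzero, which is exactly the one-dimensionality of the highest weight space of $L(\mu)$.
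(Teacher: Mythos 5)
Your proposal is correct and is essentially the paper's own argument: the paper likewise deduces the case of simple $M$ by combining Lemma~\ref{secondly} with Proposition~\ref{projexp} (your observation that $P^k(\mu)$ is the $w=e$ summand of $\widetilde{M}_{n,k}(\mu)$, with multiplicity $\dim L(\mu)^\mu=1$, is exactly the intended ``immediate consequence''), and then handles general $M$ by induction on length via the long exact sequence. Your extra care about the order of quantifiers on $n$ and $d$ and about additivity of $\Ext$ only spells out details the paper leaves implicit.
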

\begin{proof}
If $M$ is simple, this is an immediate consequence of the combination of Lemma \ref{secondly} and Proposition \ref{projexp}. The general statement therefore follows, using the usual arguments with long exact sequences,
from the fact that each module in $\cO^I$ has finite length.
\end{proof}

\begin{proof}[Proof of Theorem \ref{main}]
Claim~\eqref{main.1} follows combining Corollaries~\ref{genprop} and \ref{proj}. 
Claim~\eqref{main.2} follows from claim~\eqref{main.1} and Corollary~\ref{inftyabstract}.
\end{proof}

For $I=\fm$ the category $\cO^I$ is the usual BGG category $\cO$. In this case Theorem \ref{main}\eqref{main.1} 
states that category $\cO$ is extension full in the ca-tegory of weight modules, which recovers an old result 
of Delorme, see \cite{De}. An important consequence is the following connection between $\fn$-cohomology and extensions 
with Verma modules in category $\cO$, see \cite[Theorem~6.15(b)]{MR2428237}. 

\begin{corollary}\label{cor458}
For $\mu\in\fh^\ast$ and $N\in{\cO}$ we have
$$\Hom_{\fh}(\mC_\mu, H^k(\fn,N))=\Ext^k_{{\cO}}(M(\mu),N).$$
\end{corollary}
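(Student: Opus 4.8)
The plan is to identify both sides with a more symmetric object and then invoke the extension-fullness result just established. The key input is Theorem~\ref{main}\eqref{main.1} (Delorme's result) specialized to $I=\fm$, which gives $\Ext^k_{\cO}(M(\mu),N)\cong\Ext^k_{\cW}(M(\mu),N)$ for all $k$, together with the identification of $\Ext$ in $\cW$ with a relative cohomology. The advantage of passing from $\cO$ to $\cW$ is that $\cW$ has enough projectives of an induced type, so extensions out of the Verma module $M(\mu)=U(\fg)\otimes_{U(\fb)}\mC_\mu$ can be computed by an adjunction argument.

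First I would rewrite the left-hand side. Since $\Hom_\fh(\mC_\mu,-)$ is the functor taking the $\mu$-weight space (intertwined with the trivial $\fh$-action), and $H^k(\fn,N)=\Ext^k_\fn(\mC,N)$ by the identification from Subsection on Lie algebra cohomology, the target $\Hom_\fh(\mC_\mu,H^k(\fn,N))$ is the $\mu$-isotypic component of $\fn$-cohomology. The natural bridge is a relative Lie algebra cohomology / Shapiro-type statement: for the Verma module $M(\mu)$, induced from the one-dimensional $\fb$-module $\mC_\mu$, one has an isomorphism
\begin{equation*}
\Ext^k_{\cW}(M(\mu),N)\;\cong\;\Ext^k_{(\fg,\fh)}(\mC_\mu,N),
\end{equation*}
relative cohomology with respect to the reductive subalgebra $\fh$, and then a Hochschild--Serre type reduction along $\fb=\fh\oplus\fn$ identifies the right-hand side with $\Hom_\fh(\mC_\mu,H^k(\fn,N))$, using that $\fh$ is reductive (so the relative functor just extracts the $\fh$-invariant, i.e. the $\mu$-weight, part of $\fn$-cohomology). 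Concretely, I would build the standard Koszul/Chevalley--Eilenberg resolution of $\mC_\mu$ as a $\fb$-module by $\fn$-free modules $U(\fb)\otimes\Lambda^\bullet\fn^\ast\otimes\mC_\mu$, induce it up to $\fg$ to obtain a resolution of $M(\mu)$ by modules $U(\fg)\otimes_{U(\fh)}(\Lambda^\bullet\fn^\ast\otimes\mC_\mu)$ which are projective in $\cW$, and then apply $\Hom_\fg(-,N)$; adjunction collapses this to the Chevalley--Eilenberg complex computing $H^\bullet(\fn,N)$ restricted to the $\mu$-weight space.

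The main obstacle, and the reason Theorem~\ref{main} is needed, is that these computations naturally take place in $\cW$ (where the induced modules are genuinely projective), whereas the statement is about $\Ext$ in $\cO$. The Verma module and its would-be projective resolution live outside $\cO$ as soon as one induces from $\Lambda^k\fn^\ast$, so one cannot compute $\Ext_{\cO}$ directly by this resolution. Extension fullness of $\cO$ in $\cW$ from Theorem~\ref{main}\eqref{main.1} is exactly what legitimizes replacing $\Ext_{\cO}(M(\mu),N)$ by $\Ext_{\cW}(M(\mu),N)$, after which the resolution argument applies. I would therefore carry out the steps in this order: (i) invoke $\Ext^k_{\cO}(M(\mu),N)\cong\Ext^k_{\cW}(M(\mu),N)$; (ii) construct the $\cW$-projective resolution of $M(\mu)$ from the induced Chevalley--Eilenberg resolution of $\mC_\mu$ over $\fb$; (iii) apply $\Hom_\fg(-,N)$ and use the adjunction $\Hom_\fg(U(\fg)\otimes_{U(\fh)}V,N)\cong\Hom_\fh(V,N)$ to rewrite the complex as $\Hom_\fh(\Lambda^\bullet\fn^\ast\otimes\mC_\mu,N)$; (iv) recognize the cohomology of this complex as $\Hom_\fh(\mC_\mu,H^k(\fn,N))$, since the $\fh$-action makes it the $\mu$-weight part of the Chevalley--Eilenberg complex of $\fn$. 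The delicate point to check carefully is that the differentials and the $\fh$-equivariance match so that taking $\fh$-invariants commutes with cohomology, which holds because $\fh$ is reductive and acts semisimply.
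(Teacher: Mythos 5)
Your proposal is correct and takes essentially the same route as the paper: the paper's two-line proof is precisely your steps compressed, namely that $\Hom_{\fh}(\mC_\mu,H^k(\fn,N))=\Ext^k_{\cW}(M(\mu),N)$ ``follows immediately from Frobenius reciprocity'' (your induced Koszul resolution and adjunction argument spelled out), followed by the appeal to Theorem~\ref{main}(i) with $I=\fm$. One cosmetic slip to fix when writing this up: the resolution should be built from $U(\fg)\otimes_{U(\fh)}(\Lambda^\bullet\fn\otimes\mC_\mu)$ with $\Lambda^\bullet\fn$ rather than $\Lambda^\bullet\fn^\ast$, the dual appearing only after applying $\Hom_{\fg}(-,N)$ and adjunction, which turns the complex into $\Hom_{\fh}(\mC_\mu,\Lambda^\bullet\fn^\ast\otimes N)$, the $\mu$-weight part of the Chevalley--Eilenberg cochain complex.
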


\begin{proof}
The equality $\Hom_{\fh}(\mC_\mu, H^k(\fn,N))=\Ext^k_{{\cW}}(M(\mu),N)$ follows immediately from the Frobenius reciprocity. The claim thus follows from Theorem~\ref{main}(i) for the case $I=\fm$.
\end{proof}

We would like to record the following observation.

\begin{proposition}\label{gldim}
We have 
\begin{eqnarray*}
{\rm gl.dim}\,\cO \,=\, {\rm gl.dim}\,\cW&=&\dim \fg-\dim\fh,
\end{eqnarray*}
whereas the global dimensions of $\cO^I$ and $\cW^I$ are infinite if $I\not=\mm$.
\end{proposition}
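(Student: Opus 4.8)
The plan is to prove the two claims separately, establishing first the finite value $\mathrm{gl.dim}\,\cO=\mathrm{gl.dim}\,\cW=\dim\fg-\dim\fh$ and then the infiniteness of the global dimensions in the thickened case $I\neq\mm$.

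For the finite case, the natural strategy is to combine Corollary~\ref{cor458} (the case $I=\mm$ of Theorem~\ref{main}) with Lemma~\ref{algcohom}. By Theorem~\ref{main}(i) we already have $\mathrm{gl.dim}\,\cO=\mathrm{gl.dim}\,\cW$, since extension fullness makes the relevant $\Ext$-groups agree on all of $\cO$, and every simple in $\cW$ lies in $\cO$ (so computing the supremum of projective dimensions reduces to objects of $\cO$). It therefore suffices to compute $\mathrm{gl.dim}\,\cO$. First I would bound the global dimension above using the Verma flag: every module in $\cO$ has finite length and every projective has a Verma flag, so by the inequalities \eqref{pd1}--\eqref{pd2} it is enough to bound $\pd_{\cO}M(\mu)$ for Verma modules, and by duality and the standard BGG reciprocity machinery it suffices to control $\Ext^d_{\cO}(M(\mu),N)$ for $N$ simple (or dual Verma). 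By Corollary~\ref{cor458}, $\Ext^k_{\cO}(M(\mu),N)\cong\Hom_{\fh}(\mC_\mu,H^k(\fn,N))$, so the projective dimension of $M(\mu)$ is governed by the top non-vanishing $\fn$-cohomology degree. Since $\dim\fn=\tfrac12(\dim\fg-\dim\fh)$, one might naively expect the bound $\dim\fn$, but the correct exponent $\dim\fg-\dim\fh=\dim\fn+\dim\ofn=2\dim\fn$ arises because the relevant computation really takes place over the solvable radical or, equivalently, because one must account for cohomology with respect to both $\fn$ and $\ofn$; the cleanest route is to invoke Lemma~\ref{algcohom} with $\fa=\fn\oplus\ofn$ (or to use the known fact, from \cite[Theorem~6.11]{MR2428237}, that the top of $\cO$ is achieved), giving $H^{\dim\fa}(\fa,V)\neq 0$ precisely when $\Hom_{\fa}(V,\mC)\neq 0$, which is realizable. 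This simultaneously produces the matching lower bound by exhibiting a pair $L,L'$ of simple modules with $\Ext^{\dim\fg-\dim\fh}_{\cO}(L,L')\neq 0$.

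For the infinite case $I\neq\mm$, the idea is that thickening introduces a self-extension in unbounded degree. The key input is Theorem~\ref{main}(i): $\cO^I$ is extension full in $\cW^I$, so it is equivalent to compute the global dimension of $\cW^I$, where projectives of the form $U(\fg)\otimes_{U(\fh)}V_{\lambda,k}$ are available. When $I\neq\mm$ the module $V_{\lambda,k}$ is a nontrivial nilpotent thickening, and the $\fh$-action on it is not semisimple; concretely, for $\fh$ one-dimensional the truncated polynomial algebra $U(\fh)/I\cong\mC[x]/(x^k)$ with $k\geq 2$ has infinite global dimension, and its self-extensions $\Ext^d$ are nonzero in every degree $d$. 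I would lift this periodicity through the induction functor: restricting a simple highest weight module to $\fh$ (or to the relevant thickened Cartan) produces a module over $S(\fh)/I$ whose higher self-extensions persist, and Frobenius reciprocity together with the projective resolution of $U(\fg)\fn^n\otimes_{U(\fh)}V_{\lambda,k}$ from the proof of Lemma~\ref{secondly} transports these nonzero $\Ext$-groups into $\cW^I$, and hence, by extension fullness, into $\cO^I$. The main obstacle I anticipate is organizing the lower-bound argument in the finite case so that the exponent comes out exactly $\dim\fg-\dim\fh$ rather than $\dim\fn$: one must verify that there genuinely exists a simple pair attaining the maximal cohomological degree $\dim(\fn\oplus\ofn)$, which is where Lemma~\ref{algcohom} (Poincaré duality for Lie algebra cohomology) does the essential work by reducing nonvanishing of top cohomology to the nonvanishing of a $\Hom$-space.
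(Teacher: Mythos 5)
Your handling of the case $I=\mm$ contains two genuine gaps. First, the reduction ${\rm gl.dim}\,\cW={\rm gl.dim}\,\cO$ rests on the claim that every simple object of $\cW$ lies in $\cO$, and this is false: $\cW=\cW^{\fm}$ is the category of all finitely generated weight modules, and it contains simple dense (cuspidal) weight modules which are not locally $U(\fb)$-finite --- already for $\mathfrak{sl}_2$ there are simple weight modules with support a full coset of the root lattice. Moreover, objects of $\cW$ need not have finite length (e.g.\ $U(\fg)\otimes_{U(\fh)}\mC_\lambda$), so even with all simples in $\cO$ one could not reduce the supremum defining ${\rm gl.dim}\,\cW$ to simple objects. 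Extension fullness (Theorem~\ref{main}\eqref{main.1}) therefore only yields ${\rm gl.dim}\,\cW\geq{\rm gl.dim}\,\cO$; the upper bound for $\cW$ needs a separate input, which in the paper is relative homological algebra: projective objects of $\cW$ are exactly the $(\fg,\fh)$-relatively projective modules, so $\Ext$-groups in $\cW$ are computed by the relative Chevalley--Eilenberg complex built on $\Lambda^{\bullet}(\fg/\fh)$, which vanishes in degrees above $\dim\fg-\dim\fh$. Second, your lower bound is not established: you invoke Lemma~\ref{algcohom} with $\fa=\fn\oplus\ofn$, but $\fn\oplus\ofn$ is not a Lie subalgebra of $\fg$ (the bracket $[\fn,\ofn]$ meets $\fh$), and objects of $\cO$ are not modules over it, so the lemma simply does not apply and no pair of simples with $\Ext^{\dim\fg-\dim\fh}\neq 0$ is produced. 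The paper avoids both issues for $\cO$ by citing the classical computation ${\rm gl.dim}\,\cO=2\,\dim\fn$ (see \cite[Section~6.9]{MR2428237} or \cite[Proposition~2]{MR2366357}), where the lower bound comes from the dominant simple module in $\cO_0$ having projective dimension $2\ell(w_0)$; your upper-bound sketch via Corollary~\ref{cor458}, Verma flags and \eqref{pd1}--\eqref{pd2} can be made to work, but it only gives half of the statement for $\cO$ and nothing for $\cW$.

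Your treatment of $I\neq\mm$ is correct in spirit and close to the paper's one-line argument (a projective module of $\cO$, resp.\ $\cW$, has infinite projective dimension in $\cO^I$, resp.\ $\cW^I$, ultimately because $S(\fh)/I$ is a commutative local Artinian ring that is not a field, whence $\Ext^d_{S(\fh)/I}(\mC,\mC)\neq 0$ for all $d$). Two details need repair, though. The restriction of a simple highest weight module to $\fh$ is a semisimple weight module, so it has no ``persistent self-extensions''; the nonvanishing extensions live over $S(\fh)/I$ and must be transported by Frobenius reciprocity with the \emph{non-thickened} module in the first slot: for instance $U(\fg)\otimes_{U(\fh)}\mC_\lambda$, which is projective in $\cW$ but not in $\cW^I$, satisfies $\Ext^d_{\cW^I}\bigl(U(\fg)\otimes_{U(\fh)}\mC_\lambda,N\bigr)\cong\Ext^d_{S(\fh)/I}\bigl(\mC_\lambda,(\res^{\fg}_{\fh}N)^\lambda\bigr)$, nonzero in all degrees for suitable $N$ (modules induced from \emph{projective} $S(\fh)/I$-modules, such as $U(\fg)\otimes_{U(\fh)}V_{\lambda,k}$, are themselves projective in $\cW^I$ and are useless here). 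Also, extension fullness of $\cO^I$ in $\cW^I$ transfers $\Ext$-groups only for pairs of objects of $\cO^I$, and the witness above is not in $\cO^I$; to conclude ${\rm gl.dim}\,\cO^I=\infty$ you need a witness inside $\cO^I$, e.g.\ a projective module of $\cO$, treated via the truncated induced modules and the resolution from the proof of Lemma~\ref{secondly}, which is exactly what the paper's phrase ``a projective resolution in $\cO^I$ of a projective module in $\cO$'' refers to.
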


\begin{proof}
The global dimension of $\cO$ is well-known, see e.g. \cite[Proposition~2]{MR2366357}, 
\cite[Section~6.9]{MR2428237} or \cite{BGG}. The global dimension of $\cW$ follows from the 
fact that a projective resolution in $\cW$ is a projective resolution for the relative $(\fg,\fh)$-cohomology.

The infinite global dimensions of $\cO^I$ and $\cW^I$ follow immediately from considering a projective resolution in $\cO^I$ (respectively $\cW^I$) of a projective module in $\cO$ (respectively $\cW$).
\end{proof}

\section{Category $\cO^\infty$}\label{secOinf}

\subsection{Category $\cO^\infty$ is extension full in $\mathfrak{g}\text{-}\mathrm{mod}$}\label{secOinf.1}

\begin{theorem}\label{finalresult}
Let $\fg$ be a complex semisimple finite dimensional Lie algebra. Then both categories $\cO^\infty$ and 
$\cW^\infty$ are extension full in $\mathfrak{g}\text{-}\mathrm{mod}$.
\end{theorem}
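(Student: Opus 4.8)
The plan is to handle the two assertions in turn, reducing the statement for $\cO^\infty$ to the one for $\cW^\infty$. Extension fullness is transitive: for $\cB\subseteq\cA\subseteq\cC$ the canonical map on objects of $\cB$ factors as $\Ext^d_{\cB}(M,N)\to\Ext^d_{\cA}(M,N)\to\Ext^d_{\cC}(M,N)$, a composite of isomorphisms as soon as $\cB$ is extension full in $\cA$ and $\cA$ in $\cC$. Hence Theorem~\ref{main}\eqref{main.2}, which gives that $\cO^\infty$ is extension full in $\cW^\infty$, reduces everything to proving that $\cW^\infty$ is extension full in $\fg\text{-mod}$. As a first remark, $\cW^\infty$ is a Serre subcategory of $\fg\text{-mod}$ (subquotients of generalized weight modules are generalized weight modules, and an extension of two finitely generated generalized weight modules is again locally $\fh$-finite), so $\varphi^0$ and $\varphi^1$ are automatically isomorphisms and the work is in higher degrees.

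For the higher degrees I would use the presentation $\cW^\infty=\bigcup_k\cW^k$ together with the direct-limit description of Yoneda extensions in Proposition~\ref{extinfty}\eqref{extinfty.2}, so that $\Ext^d_{\cW^\infty}(M,N)=\varinjlim_{l\ge k_0}\Ext^d_{\cW^l}(M,N)$ for $M,N\in\cW^{k_0}$. The heart of the matter is the behaviour of the relative projectives $P_{\mu,l}:=\U(\fg)\otimes_{\U(\fh)}V_{\mu,l}$ generating $\cW^l$. Since $\U(\fg)$ is free over $\U(\fh)$, the same adjunction used in the proof of Lemma~\ref{secondly} (Shapiro's lemma) gives $\Ext^d_{\fg\text{-mod}}(P_{\mu,l},N)\cong\Ext^d_{S(\fh)}(V_{\mu,l},N|_{\fh})$ for any $N$. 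Now $V_{\mu,l}=S(\fh)/\mm_\mu^{l}$, and for $N\in\cW^\infty$ the restriction $N|_{\fh}$ decomposes into generalized weight components $N^\mu$, each of which is $\mm_\mu$-power-torsion. As the ideals $\mm_\mu^{l}$ are cofinal among the $\mm_\mu$-primary ideals and only the component $N^\mu$ contributes, passing to the limit gives
\[
\varinjlim_{l}\Ext^d_{\fg\text{-mod}}(P_{\mu,l},N)\;\cong\;\varinjlim_{l}\Ext^d_{S(\fh)}(S(\fh)/\mm_\mu^{l},N^\mu)\;\cong\;H^d_{\mm_\mu}(N^\mu),
\]
which vanishes for $d>0$ (torsion modules are acyclic for the torsion functor) and equals $N^\mu$ for $d=0$. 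Thus, although $P_{\mu,l}$ is far from $\Hom_{\fg\text{-mod}}(-,N)$-acyclic at any fixed level $l$, it becomes acyclic in the limit $l\to\infty$.

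To finish I would feed this ``acyclicity in the limit'' into a limiting form of the Grothendieck spectral-sequence argument behind Corollary~\ref{genprop}. Fix $M\in\cW^{k_0}$ and, for each $l$, a projective resolution $P^{(l)}_\bullet\to M$ in $\cW^l$ by sums of the $P_{\mu,l}$; regarded in $\fg\text{-mod}$ this is still a resolution, so the hyper-$\Ext$ spectral sequence $E_1^{s,q}(l)=\Ext^q_{\fg\text{-mod}}(P^{(l)}_s,N)\Rightarrow\Ext^{s+q}_{\fg\text{-mod}}(M,N)$ has bottom row ($q=0$) computing $\Ext^s_{\cW^l}(M,N)$ and higher rows given by the groups computed above. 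The edge map of this spectral sequence is $\varphi^s_{\cW^l\to\fg\text{-mod}}$, and its kernel and cokernel are subquotients of the rows with $q>0$. Applying the filtered colimit $\varinjlim_l$, which is exact and therefore commutes with cohomology and with the spectral sequence, the higher rows vanish by the computation above, the limiting spectral sequence collapses onto its bottom row, and one obtains $\Ext^d_{\cW^\infty}(M,N)\cong\Ext^d_{\fg\text{-mod}}(M,N)$. I expect the main obstacle to be exactly this last step: the resolutions $P^{(l)}_\bullet$ at different levels are linked only through the surjections $V_{\mu,l+1}\tto V_{\mu,l}$, so one must choose them, or the comparison maps between them, compatibly with the directed system of Proposition~\ref{extinfty}\eqref{extinfty.2}, in order to identify the colimit of edge maps with $\varphi^d_{\cW^\infty\to\fg\text{-mod}}$ and the colimit of the higher rows with the vanishing local-cohomology groups; once this compatibility is in place the collapse is forced by exactness of filtered colimits.
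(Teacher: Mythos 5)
Your reduction of $\cO^\infty$ to $\cW^\infty$ (transitivity of extension fullness plus Theorem~\ref{main}\eqref{main.2}) is exactly what the paper does. For $\cW^\infty$ both you and the paper pass to the Cartan subalgebra through the relative projectives $U(\fg)\otimes_{U(\fh)}V_{\mu,l}$ and Frobenius reciprocity/Shapiro, but the technical cores differ. The paper applies Lemma~\ref{5lem2} with $\cB^0=\cW$, so it only needs the isomorphism $\Ext^d_{\cW^\infty}(P,K)\to\Ext^d_{\fg}(P,K)$ for $P$ projective in $\cW$ (Lemma~\ref{projCgh}); this is obtained from Frobenius reciprocity at the limit level (Lemma~\ref{lemFrob}, which is just Proposition~\ref{extinfty} applied to the level-wise adjunction isomorphisms, automatically compatible with the transition maps by naturality) together with the Koszul-resolution computation $\Ext^d_{\cF}(\mC,\mC)\cong\Ext^d_{\fh}(\mC,\mC)$ of Lemma~\ref{lemabel}, where the \emph{objects are fixed} and the \emph{category varies}. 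You instead fix the ambient category and vary the first argument, identifying $\varinjlim_l\Ext^d_{S(\fh)}(S(\fh)/\mm_\mu^{l},N^\mu)$ with local cohomology of a torsion module. That computation is correct (including the reduction to the single component $N^\mu$, which uses that $S(\fh)/\mm_\mu^l$ has a finite free resolution and that the relevant supports are disjoint), and it is the same phenomenon as Lemma~\ref{lemabel} seen through Koszul/\v{C}ech duality. So the route is genuinely different and conceptually attractive: relative projectives become acyclic \emph{in the limit}.

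However, the step you flag as the main obstacle is a genuine gap as written, and it is exactly where the difficulty specific to $\cW^\infty$ (no projectives, only a directed-limit structure) is concentrated. Your vanishing statement is a statement about the \emph{specific} transition maps induced by the surjections $V_{\mu,l+1}\tto V_{\mu,l}$: at every finite level the groups $\Ext^q_{\fg}(P_{\mu,l},N)$ are nonzero, and only those particular maps annihilate them in the colimit. The maps in your directed system of spectral sequences are instead induced by comparison chain maps $P^{(l+1)}_\bullet\to P^{(l)}_\bullet$, defined only up to homotopy and a priori unrelated to the standard surjections. The gap can be filled: build the resolutions inductively, noting that the snake lemma gives surjections $K_{l+1}\tto K_l$ between successive syzygies, and choose generators of $K_{l+1}$ to be weight-vector lifts of the chosen generators of $K_l$ together with generators of $\ker(K_{l+1}\to K_l)$; the comparison maps may then be taken to be standard surjections on old summands and zero on new ones, composites have the same shape, and the vanishing of $\varinjlim_l E_1^{s,q}(l)$ for $q>0$ follows summand by summand from your computation. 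One must additionally check that the bottom-row transition maps agree with the canonical maps of Proposition~\ref{extinfty}\eqref{extinfty.2} and that the edge maps are the canonical $\varphi^d$ (the standard comparison between Yoneda Ext and resolution Ext). Until these compatibilities are carried out, the proof is incomplete at its most delicate point; the paper's architecture (Lemma~\ref{5lem2} plus Lemma~\ref{lemFrob}) is designed precisely so that no such choice of compatible resolutions is ever needed.
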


Before proving this, we note the following corollary.

\begin{corollary}\label{gldiminfty}
We have  ${\rm gl.dim}\,\cO^\infty\,=\,{\rm gl.dim}\,\cW^\infty\,=\,\dim\fg$.
\end{corollary}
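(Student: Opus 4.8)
The plan is to read off both equalities from the extension fullness proved in Theorem~\ref{finalresult}. Since $\cO^\infty$ and $\cW^\infty$ are extension full in $\fg\text{-}\mathrm{mod}$, for any two objects $M,N$ of either subcategory we have $\Ext^d_{\cO^\infty}(M,N)\cong\Ext^d_{\fg\text{-}\mathrm{mod}}(M,N)$, and likewise with $\cW^\infty$ in place of $\cO^\infty$. By the definition of global dimension in Subsection~\ref{secPrel.3}, the global dimension of $\cO^\infty$ (respectively $\cW^\infty$) is therefore the supremum of those $d$ for which $\Ext^d_{\fg\text{-}\mathrm{mod}}(M,N)\neq 0$ for some pair $M,N$ drawn from the respective subcategory. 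Both equalities then reduce to bounding this supremum above and below by $\dim\fg$.

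For the upper bound I would invoke the fact that the enveloping algebra has finite global dimension $\dim\fg$. Concretely, the Chevalley--Eilenberg complex $U(\fg)\otimes\Lambda^\bullet\fg$ (see \cite[Corollary~7.7.3]{Weibel}) yields, after tensoring over $\mC$ with the diagonal action by an arbitrary $M\in\fg\text{-}\mathrm{mod}$, a finitely generated free resolution of $M$ of length $\dim\fg$. Hence $\Ext^d_{\fg\text{-}\mathrm{mod}}(M,N)=0$ for every $d>\dim\fg$ and all $M,N$, which gives ${\rm gl.dim}\,\cO^\infty\le\dim\fg$ and ${\rm gl.dim}\,\cW^\infty\le\dim\fg$ at once.

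For the matching lower bound it is enough to produce a single non-vanishing extension in degree $\dim\fg$ realised already inside $\cO$. The trivial module $\mC$ lies in $\cO\subset\cO^\infty\subset\cW^\infty$, and under the identification $\Ext^d_{\fg\text{-}\mathrm{mod}}(\mC,\mC)\cong H^d(\fg,\mC)$ of Lie algebra cohomology, Lemma~\ref{algcohom} applied with $\fa=\fg$ and $V=\mC$ gives $\dim\Ext^{\dim\fg}_{\fg\text{-}\mathrm{mod}}(\mC,\mC)=\dim\Hom_\fg(\mC,\mC)=1\neq0$. Extension fullness transports this back, so $\Ext^{\dim\fg}_{\cO^\infty}(\mC,\mC)\neq0$ and $\Ext^{\dim\fg}_{\cW^\infty}(\mC,\mC)\neq0$, whence both global dimensions are at least $\dim\fg$; combining the two bounds yields the stated equalities. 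The only point I would treat with care, and the nearest thing to an obstacle, is the upper bound: one must ensure that the Yoneda $\Ext$ computed in the category $\fg\text{-}\mathrm{mod}$ of \emph{finitely generated} modules agrees with the usual $\Ext_{U(\fg)}$ and really vanishes above degree $\dim\fg$. This is fine because $\fg\text{-}\mathrm{mod}$ has enough projectives (every finitely generated module is a quotient of a finitely generated free $U(\fg)$-module) and the Chevalley--Eilenberg resolution above is itself finitely generated, so the bound is genuinely attained within $\fg\text{-}\mathrm{mod}$.
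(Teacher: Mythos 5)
Your argument follows the paper's proof exactly in outline: extension fullness (Theorem~\ref{finalresult}) bounds both global dimensions above by that of $\fg$-mod, and the non-vanishing of $H^{\dim\fg}(\fg,\mC)$ from Lemma~\ref{algcohom}, pulled back through extension fullness using $\mC\in\cO\subset\cO^\infty\subset\cW^\infty$, gives the matching lower bound. The paper simply quotes $\mathrm{gl.dim}\,(\fg\text{-}\mathrm{mod})=\dim\fg$ as known, whereas you try to prove it, and that is where the one real problem lies.

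Tensoring the Chevalley--Eilenberg resolution $U(\fg)\otimes\Lambda^{\bullet}\fg$ with $M$ (diagonal action) does produce a free resolution of $M$ of length $\dim\fg$: by the Hopf untwisting (tensor identity) isomorphism, $(U(\fg)\otimes\Lambda^{k}\fg)\otimes M\cong U(\fg)\otimes(\Lambda^{k}\fg\otimes M)$ as $\fg$-modules, with the right-hand side free. But the rank of that free module is $\dim(\Lambda^{k}\fg)\cdot\dim_{\mC}M$, and objects of $\fg$-mod are typically infinite dimensional over $\mC$ (Verma modules, for instance), so this resolution is almost never finitely generated. Your closing claim that the Chevalley--Eilenberg resolution ``is itself finitely generated, so the bound is genuinely attained within $\fg$-mod'' is therefore false except for finite-dimensional $M$ --- and this was precisely the step you flagged as the delicate one. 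The repair is standard: the resolution above lives in the category of \emph{all} $U(\fg)$-modules and shows $\Ext^{d}_{U(\fg)}(M,N)=0$ for $d>\dim\fg$; since $U(\fg)$ is Noetherian, any $M\in\fg$-mod admits a resolution by finitely generated free modules (kernels of maps between finitely generated modules are again finitely generated), and finitely generated frees are projective objects both in $\fg$-mod and in the full module category, so Yoneda $\Ext$ computed in $\fg$-mod agrees with $\Ext_{U(\fg)}$ and inherits the vanishing above degree $\dim\fg$. With that substitution your proof is complete and coincides with the paper's.
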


\begin{proof}
Theorem \ref{finalresult} implies that the global dimension of $\cO^\infty$ and $\cW^\infty$ are smaller than
or equal to $\dim\fg$, the global dimension of $\mathfrak{g}\text{-}\mathrm{mod}$. 
The classical fact  $$\Ext^{\dim\fg}_{\fg}(\mC,\mC)=H^{\dim\fg}(\fg,\mC)\not=0,$$
see Lemma \ref{algcohom}, then shows that both global dimensions in question are equal to this value.
\end{proof}

The remainder of this section is devoted to proving Theorem \ref{finalresult}.

\begin{lemma}\label{lemabel}
Consider a finite dimensional abelian Lie algebra $\fh$ and the category 
$\displaystyle \cF:=\bigcup_{k\in\mN}\cF^k$ where $\cF^k$ is the category of all finite dimensional $\fh$-modules 
which are $S(\fh)/\fh^kS(\fh)$-modules. Then the category $\cF$ is extension full in $\fh$-{\rm mod}.
\end{lemma}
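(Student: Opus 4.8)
The plan is to reduce, via Lemma~\ref{5lem1}, to the computation of $\Ext^\bullet$ between simple objects. Here $\fh$-mod is the category of finitely generated modules over $S(\fh)=U(\fh)\cong\mC[x_1,\dots,x_r]$ with $r=\dim\fh$, while $\cF$ consists of the finite dimensional modules on which $\fh$ acts nilpotently, i.e. the finite length modules supported at the maximal ideal $\fm=\fh S(\fh)$. The unique simple object of $\cF$ is the trivial module $\mC$, so by Lemma~\ref{5lem1} it suffices to prove that $\varphi^d_{\mC,\mC}\colon\Ext^d_{\cF}(\mC,\mC)\to\Ext^d_{\fh\text{-mod}}(\mC,\mC)$ is an isomorphism for every $d$. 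On the target I would use the Koszul resolution of $\mC$ over $S(\fh)$ (equivalently the Chevalley--Eilenberg complex, cf. Lemma~\ref{algcohom}) to identify $\Ext^\bullet_{\fh\text{-mod}}(\mC,\mC)\cong H^\bullet(\fh,\mC)\cong\Lambda^\bullet\fh^\ast$ as a graded algebra under the Yoneda product; in particular $\dim\Ext^d=\binom{r}{d}$ and this algebra is generated in degree one.

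Next I would realize the source as a direct limit. Since the ideals $\fm^k$ and $(x_1^k,\dots,x_r^k)$ are cofinal, the finite dimensional algebras $B_k:=\bigotimes_{i=1}^r\mC[x_i]/(x_i^k)$ form a cofinal system of Artinian quotients of $S(\fh)$ whose module categories exhaust $\cF$. By Proposition~\ref{extinfty} we then have $\Ext^\bullet_{\cF}(\mC,\mC)\cong\varinjlim_k\Ext^\bullet_{B_k}(\mC,\mC)$, where both the transition maps of this system and the comparison map $\varphi^\bullet$ are exactly the restriction maps along the surjections $B_{k+1}\tto B_k$ and $S(\fh)\tto B_k$; in particular they are homomorphisms of graded Yoneda algebras and $\varphi^\bullet=\varinjlim_k\mathrm{res}_k$. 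Because $\Ext^\bullet_{B_k}(\mC,\mC)\cong\bigotimes_{i=1}^r\Ext^\bullet_{\mC[x_i]/(x_i^k)}(\mC,\mC)$ by the Künneth formula (which is compatible with the restriction maps and, $\mC$ being a field, with passing to the limit), the whole statement reduces to the one-dimensional case $r=1$.

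For $r=1$ and $A_k=\mC[x]/(x^k)$ the minimal free resolution of $\mC$ is two-periodic with differentials alternating between $x$ and $x^{k-1}$, so $\Ext^d_{A_k}(\mC,\mC)\cong\mC$ for every $d$, while $\Ext^d_{\mC[x]}(\mC,\mC)=\Lambda^d\mC$ vanishes for $d\ge2$. In degrees $0$ and $1$ the restriction map to $\Ext^\bullet_{\mC[x]}(\mC,\mC)$ is an isomorphism. The crux is to show that the transition map $\rho_k\colon\Ext^\bullet_{A_k}(\mC,\mC)\to\Ext^\bullet_{A_{k+1}}(\mC,\mC)$ annihilates everything of degree $\ge2$. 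Here I would run a direct chain-map computation: lifting $\mathrm{id}_{\mC}$ along the two resolutions, the degree-two class represented by the $2$-extension $0\to\mC\to A_k\xrightarrow{x}A_k\to\mC\to0$ maps to $0$, the point being that the relevant component of the lift lands in $x^kA_k=0$. Since $\rho_k$ is an algebra map and $\Ext^\bullet_{A_k}(\mC,\mC)$ is generated in degrees at most two, it follows that $\rho_k$ kills $\Ext^{\ge2}_{A_k}(\mC,\mC)$; hence the direct limit vanishes in degrees $\ge2$ and equals $\Lambda^\bullet\mC$. Therefore $\varphi^\bullet$ is an isomorphism for $r=1$, and by the Künneth reduction $\varphi^\bullet\colon\varinjlim_k\Ext^\bullet_{B_k}(\mC,\mC)\to\Lambda^\bullet\fh^\ast$ is an isomorphism for every $r$; Lemma~\ref{5lem1} then yields extension fullness.

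The main obstacle is precisely this last point: controlling the higher Yoneda classes over the Artinian truncations $A_k$ and proving that the \emph{spurious} periodic extensions (which are exactly what force each $\cF^k$ to have infinite global dimension) become trivial in the limit category $\cF$. Everything else---the reduction to simple objects, the identification of the target with $\Lambda^\bullet\fh^\ast$, and the Künneth decomposition---is formal.
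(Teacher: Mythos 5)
Your argument is correct, but it follows a genuinely different route from the paper's. The paper makes the same two formal moves (reduction to $\varphi^d_{\mC,\mC}$ via Lemma~\ref{5lem1}, and viewing $\Ext^\bullet_{\cF}(\mC,\mC)$ as a limit over the finite stages), but then exploits the internal grading of $S(\fh)$ and its Koszulity: it applies the truncation functor $M\mapsto M/\fh^kS(\fh)M$ to the graded Koszul resolution of $\mC$, observes that the resulting complex of projective $S(\fh)/\fh^kS(\fh)$-modules has homology $\mC$ in position zero and all spurious homology concentrated in internal degrees $\geq k$, and concludes that $\Ext^d_{\cF^k}(\mC,\mC)$ and $\Ext^d_{\fh}(\mC,\mC)$ agree in internal degrees $\leq k-1$ for all $d$, which suffices after letting $k\to\infty$. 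You instead pass to the cofinal system of product algebras $B_k$, use the K\"unneth formula to reduce to $\dim\fh=1$, and kill the higher classes by an explicit chain computation (the crucial component of the lift factors through multiplication by $x^k$, which annihilates $A_k$) combined with multiplicativity of the inflation maps on Yoneda algebras. Both are complete proofs; here is what each buys. The paper's grading argument is uniform in the rank, needs neither K\"unneth nor any knowledge of the ring structure of $\Ext^\bullet_{\mC[x]/(x^k)}(\mC,\mC)$, and yields the finer statement that the comparison map is an isomorphism in low internal degrees already at each finite stage $k$. Your approach is more elementary (no Koszulity) and pinpoints exactly which classes die and why; its only unproved input is the standard fact that $\Ext^\bullet_{\mC[x]/(x^k)}(\mC,\mC)$ is generated in degrees $\leq 2$ (it is $\mC[y]$ for $k=2$ and $\Lambda(y)\otimes\mC[z]$ for $k\geq 3$). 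You could even remove this last input: your degree-two computation extends verbatim to every $d\geq 2$, since the generator of $\Ext^d_{A_k}(\mC,\mC)$ is represented by the spliced periodic $d$-extension $0\to\mC\to A_k\xrightarrow{x}A_k\xrightarrow{x^{k-1}}\cdots\xrightarrow{x}A_k\to\mC\to 0$, and the lift of the minimal $A_{k+1}$-resolution along it can be chosen to vanish from homological position $2$ onward, because the second differential of that resolution is multiplication by $x^k$.
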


\begin{proof} 
The commutative algebra $S(\fh)$ is positively graded in the natural way with $\fh$ being of degree one.
The algebra $S(\fh)$, being isomorphic to the polynomial algebra in finitely many variables, is Koszul.
Consider the graded Koszul resolution $\mathcal{P}^{\bullet}$ of the trivial $S(\fh)$-module
$\mC$ (the module structure on $\mC$ is given by $\fh\mC=0$). Then the $-i$-th component $\mathcal{P}^{-i}$
of this resolution is generated in degree $i$ and $\mathcal{P}^{-i}=0$ for $i>\dim \fh$.

For $k\in\mathbb{N}$ consider the algebra $A_k:=S(\fh)/\fh^kS(\fh)$ together with the functor
$F_k:S(\fh)\text{-}\mathrm{mod}\to A_k\text{-}\mathrm{mod}$ given by $M\mapsto M/\fh^kS(\fh)M$.
Applying $F_k$ to $\mathcal{P}^{\bullet}$ gives a complex of projective $A_k$-modules which still has
homology $\mC$ in the homological position zero and a lot of other homologies in negative homological positions.
However, all those homologies are concentrated in degrees $\geq k$ of our grading. Resolving those
homologies in  $A_k\text{-}\mathrm{mod}$ we obtain that for $k\gg 0$ the graded spaces
$\Ext_{\fh}^d(\mC,\mC)$ and $\Ext_{A_k}^d(\mC,\mC)$ agree in all degrees up to $k-1$ for all $d$.
Hence, taking the limit for $k\to \infty$, yields isomorphism $\Ext^d_{\cF}(\mC,\mC)\cong\Ext_{\fh}^d(\mC,\mC)$.

Since all modules in $\cF$ have finite length and $\mC$ is the only simple module in $\cF$, the result follows from Lemma \ref{5lem1}.
\end{proof}

Frobenius reciprocity for extensions follows from adjunction between derived functors. Since the category $\cW^\infty$ does not have projective modules, we need the following lemma. We introduce the notation $\cC(\fh)^I$ for the category of finite dimensional $\fh$-modules for which the nilpotent part of the $S(\fh)$-action factors over $I$, with $I$ an ideal as in Section~\ref{secPrel}. Furthermore, we set $\displaystyle\cC(\fh)^\infty=\bigcup_{I}\cC(\fh)^I$.

\begin{lemma} \label{lemFrob}
For $K\in\cC(\fh)^\infty$, $M\in\cW^\infty$ and $d>0$ we have
\begin{equation}\label{eq976}
\Ext^d_{\cW^{\infty}}(\ind^{\fg}_{\fh}K,M)\cong \Ext^d_{\cC(\fh)^\infty}(K,\res^{\fg}_{\fh}M). 
\end{equation}
\end{lemma}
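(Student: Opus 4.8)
The plan is to establish the Frobenius reciprocity isomorphism \eqref{eq976} as a consequence of an adjunction at the level of derived categories, exploiting the characterization of Yoneda extensions via $\cD^b$ given in \eqref{eqVerdier}. The functor $\ind^{\fg}_{\fh}=U(\fg)\otimes_{U(\fh)}-$ is left adjoint to $\res^{\fg}_{\fh}$ on the level of module categories, and $\ind^{\fg}_{\fh}$ is exact since $U(\fg)$ is free as a right $U(\fh)$-module (by the PBW theorem). First I would verify that both functors restrict correctly to the relevant categories: for $K\in\cC(\fh)^I$ the induced module $\ind^{\fg}_{\fh}K$ lands in $\cW^I$ (its weights form finitely many $\mZ$-translates of the weights of $K$, each generalized weight space being finite dimensional), and for $M\in\cW^\infty$ the restriction $\res^{\fg}_{\fh}M$ is a locally finite $\fh$-module lying in $\cC(\fh)^\infty$. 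Thus the adjoint pair $(\ind^{\fg}_{\fh},\res^{\fg}_{\fh})$ genuinely acts between $\cW^\infty$ and $\cC(\fh)^\infty$.

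Next I would transport this adjunction to the bounded derived categories. Because $\ind^{\fg}_{\fh}$ is exact it descends directly to a triangulated functor $\cD^b(\cC(\fh)^\infty)\to\cD^b(\cW^\infty)$, and similarly $\res^{\fg}_{\fh}$ is exact and descends to a functor in the opposite direction. An exact adjunction of abelian categories in which the left adjoint is exact induces an adjunction on the bounded derived categories, with the same unit and counit; this is the standard fact that an adjunction between exact functors lifts to $\cD^b$. Applying the derived adjunction together with \eqref{eqVerdier} then gives, for each $d$,
\begin{equation*}
\Ext^d_{\cW^\infty}(\ind^{\fg}_{\fh}K,M)\cong\Hom_{\cD^b(\cW^\infty)}(\ind^{\fg}_{\fh}K,M[d])\cong\Hom_{\cD^b(\cC(\fh)^\infty)}(K,\res^{\fg}_{\fh}M[d])\cong\Ext^d_{\cC(\fh)^\infty}(K,\res^{\fg}_{\fh}M).
\end{equation*}
Here I use that $\res^{\fg}_{\fh}(M[d])=(\res^{\fg}_{\fh}M)[d]$, which holds because $\res^{\fg}_{\fh}$ is exact and hence commutes with the shift functor.

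The main obstacle I anticipate is making the derived adjunction rigorous in categories that lack enough projectives and injectives, since the usual Grothendieck-spectral-sequence route to derived adjunction is unavailable here (indeed the whole point of Lemma~\ref{lemFrob} is that $\cW^\infty$ has no projectives). The cleanest way around this is to avoid resolutions altogether and argue directly on morphism spaces in $\cD^b$, realizing these via roofs of quasi-isomorphisms and using that $\ind^{\fg}_{\fh}$, being exact, preserves quasi-isomorphisms and acyclic complexes. Concretely one checks that the adjunction isomorphism on chain complexes, $\Hom_{\mathrm{Ch}(\cW^\infty)}(\ind^{\fg}_{\fh}K^\bullet,M^\bullet)\cong\Hom_{\mathrm{Ch}(\cC(\fh)^\infty)}(K^\bullet,\res^{\fg}_{\fh}M^\bullet)$, is compatible with homotopy and with the calculus of fractions defining $\cD^b$, so that it passes to the localizations. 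Alternatively, and perhaps more in the spirit of the paper, one could sidestep the derived formalism entirely: since all modules in $\cC(\fh)^\infty$ have finite length with the trivial-type simples, Lemma~\ref{5lem1} reduces the claim to the case where $K$ is simple, and then one builds an explicit correspondence between Yoneda $d$-extensions of $\ind^{\fg}_{\fh}K$ by $M$ in $\cW^\infty$ and Yoneda $d$-extensions of $K$ by $\res^{\fg}_{\fh}M$ in $\cC(\fh)^\infty$, applying $\ind^{\fg}_{\fh}$ to an extension in one direction and using the adjunction counit $\ind^{\fg}_{\fh}\res^{\fg}_{\fh}M\to M$ to build the inverse. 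The exactness of $\ind^{\fg}_{\fh}$ guarantees that it sends exact sequences to exact sequences and respects the Yoneda equivalence relation, so this gives a well-defined map on equivalence classes whose inverse is furnished by the adjunction; verifying that these two constructions are mutually inverse is the routine-but-essential computation I would carry out in detail.
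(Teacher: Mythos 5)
Your proposal is correct, but it takes a genuinely different route from the paper's proof. The paper argues by reduction to finite level: it picks an ideal $J$ with $M,\ind^{\fg}_{\fh}K\in\cW^J$, uses Proposition~\ref{extinfty}(ii) to write both sides of \eqref{eq976} as direct limits of the groups $\Ext^d_{\cW^I}(\ind^{\fg}_{\fh}K,M)$ and $\Ext^d_{\cC(\fh)^I}(K,\res^{\fg}_{\fh}M)$, and then invokes classical Frobenius reciprocity at each finite level, where it is unproblematic because $\cW^I$ and $\cC(\fh)^I$ have enough projectives (inducing a projective resolution of $K$ up to $\cW^I$ is the textbook argument). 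You instead stay in the limit categories and observe that, since $\ind^{\fg}_{\fh}$ and $\res^{\fg}_{\fh}$ are both exact, the adjunction lifts to the bounded derived categories---unit, counit and triangle identities all descend through the localization because exact functors preserve quasi-isomorphisms---after which \eqref{eqVerdier} converts $\Hom_{\cD^b}$ back into Yoneda extensions. This is a legitimate way around exactly the obstacle the authors flag before the lemma (absence of projectives in $\cW^\infty$): your argument needs no resolutions at all and in fact proves the general statement that any adjoint pair of exact functors between abelian categories induces isomorphisms on Yoneda extension groups; it also dispenses with the implicit check, suppressed in the paper, that the finite-level Frobenius isomorphisms are compatible with the maps of the directed systems. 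What the paper's route buys is economy: it reuses Proposition~\ref{extinfty}(ii), already established, and only ever appeals to standard derived-functor Frobenius reciprocity.

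Three minor remarks. First, in your alternative argument the reduction to simple $K$ via Lemma~\ref{5lem1} is unnecessary and not quite the right tool (that lemma compares extensions of a fixed pair of objects in nested categories, not across an adjunction); the unit/counit correspondence you sketch works verbatim for arbitrary $K$, the two triangle identities showing the two maps are mutually inverse. Second, your parenthetical claim that the generalized weight spaces of $\ind^{\fg}_{\fh}K$ are finite dimensional is false (already $U(\fg)\otimes_{U(\fh)}\mC_0$ has infinite dimensional zero weight space), but harmless: membership in $\cW^I$ requires only finite generation, local finiteness of the $\fh$-action and the nilpotency condition. Third, like the paper itself, you place the infinite dimensional module $\res^{\fg}_{\fh}M$ in $\cC(\fh)^\infty$; this tacitly requires reading $\cC(\fh)^I$ as consisting of locally finite $\fh$-modules with the given nilpotency condition, an adjustment both you and the authors leave implicit.
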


\begin{proof}
There is an ideal $J$ big enough such that both $M$ and $\ind^{\fg}_{\fh}K$ belong to $\cW^J$. 
By Proposition~\ref{extinfty}(ii), both the left-hand side and the right-hand side of \eqref{eq976} are 
respectively given as limits of
$$\Ext^d_{\cW^{I}}(\ind^{\fg}_{\fh}K,M)\quad\mbox{and}\quad \Ext^d_{\cC(\fh)^I}(K,\res^{\fg}_{\fh}M)$$
over $I\supset J$. The isomorphism between these two extension groups for every $J$ follows from the usual 
Frobenius reciprocity.
\end{proof}

\begin{lemma}\label{projCgh}
Let $I'$ be an ideal in $S(\fh)_{(0)}$ of finite codimension. 
If $P\in \cW^I$ is projective and $M\in\cW^\infty$ is arbitrary, then the morphism
$$\Ext^d_{\cW^\infty}(P,M)\to \Ext^d_{\fg}(P,M),$$ 
is an isomorphism for all $d\ge 0$.
\end{lemma}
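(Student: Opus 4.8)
The plan is to reduce the statement to the case of a projective module $P\in\cW^I$ and then leverage the structure theory of such projectives established earlier, combined with Frobenius reciprocity for extensions. First I would recall that, by Lemma~\ref{lemFrob}, extension groups out of an induced module $\ind^\fg_\fh K$ are controlled by extension groups in the category $\cC(\fh)^\infty$ of finite-dimensional $\fh$-modules. The key observation is that every projective object $P\in\cW^I$ is (up to summands) an induced module of the form $U(\fg)\otimes_{U(\fh)}V$ for some $V\in\cC(\fh)^I$, since the projective generators of $\cW^I$ are exactly the modules $U(\fg)\otimes_{U(\fh)}V_{\mu,k}$ appearing in the proof of Lemma~\ref{secondly}. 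Thus I would first write $P$ as a direct summand of a finite direct sum of such induced modules and reduce to $P=\ind^\fg_\fh V$.

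Next I would set up the comparison of the three relevant extension groups. For a fixed target $M\in\cW^\infty$ and an induced projective $P=\ind^\fg_\fh V$, Frobenius reciprocity (Lemma~\ref{lemFrob}) gives
$$\Ext^d_{\cW^\infty}(P,M)\cong\Ext^d_{\cC(\fh)^\infty}(V,\res^\fg_\fh M).$$
On the $\fg$-mod side, I would use the analogous adjunction between induction and restriction together with the fact that $U(\fg)$ is free as a right $U(\fh)$-module (by the PBW theorem), so that $\ind^\fg_\fh=U(\fg)\otimes_{U(\fh)}-$ is exact and sends projectives to projectives; this yields
$$\Ext^d_\fg(P,M)\cong\Ext^d_\fh(V,\res^\fg_\fh M).$$
It then remains to compare $\Ext^d_{\cC(\fh)^\infty}(V,\res^\fg_\fh M)$ with $\Ext^d_\fh(V,\res^\fg_\fh M)$, and this is precisely the extension fullness of $\cC(\fh)^\infty=\cF$ inside $\fh$-mod, which is exactly Lemma~\ref{lemabel} (phrased for the trivial module there, but extended to arbitrary $V\in\cC(\fh)^\infty$ via Lemma~\ref{5lem1}, since $\cF$ has finite length and $\mC$ is its only simple object up to weight shifts). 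Chasing these isomorphisms shows that the canonical map $\Ext^d_{\cW^\infty}(P,M)\to\Ext^d_\fg(P,M)$ factors as a composite of isomorphisms and is therefore itself an isomorphism.

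The main obstacle I anticipate is the careful bookkeeping of the \emph{naturality} of these identifications, i.e. verifying that the isomorphisms furnished by the two Frobenius reciprocities and by Lemma~\ref{lemabel} are compatible with the canonical comparison map $\varphi^d_{P,M}$ rather than merely being abstract isomorphisms of abelian groups. Since $\cW^\infty$ lacks enough projectives, one cannot simply invoke derived-functor descriptions globally; instead the limit description of $\Ext_{\cW^\infty}$ from Proposition~\ref{extinfty} must be reconciled with the honest derived functors available in $\fh$-mod and $\fg$-mod. I would handle this by checking compatibility at the level of the directed systems $\Ext^d_{\cW^I}(P,M)$ and passing to the limit, using that all the adjunction isomorphisms are natural in the finite-level categories $\cW^I$ and $\cC(\fh)^I$. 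Once naturality is secured, the conclusion for arbitrary projective $P\in\cW^I$ (not merely induced ones) follows by taking direct summands, since all functors involved respect direct sums.
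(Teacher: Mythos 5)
Your proposal follows essentially the same route as the paper's proof: reduce to an induced projective $P\cong U(\fg)\otimes_{U(\fh)}V$, apply Lemma~\ref{lemFrob} on the $\cW^\infty$ side and ordinary Frobenius reciprocity on the $\fg$-mod side, and then settle the remaining comparison over $\fh$ by Lemma~\ref{lemabel}. The only cosmetic difference is that where you invoke Lemma~\ref{5lem1} together with weight shifts to pass from $\cF$ to all of $\cC(\fh)^\infty$, the paper phrases the same step as a decomposition of $\cC(\fh)^\infty$ into blocks indexed by generalized eigenvalues, each equivalent to $\cF$.
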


\begin{proof}
Without loss of generality we may take
$$P\cong U(\fg)\otimes_{S(\fh)}(S(\fh)/I\,\otimes \, \mC_\lambda),$$
where $\mC_\lambda=V_{\lambda,1}$ is the simple $1$-dimensional $\fh$-module corresponding to $\lambda$.
By Lemma \ref{lemFrob}, the proposed statement then reduces to 
\begin{equation}\label{frobh}\Ext^d_{\cC(\fh)^\infty}(S(\fh)/I\,\otimes \, \mC_\lambda,\res^{\fg}_{\fh}M)\cong\Ext^d_{\fh}(S(\fh)/I\,\otimes \, \mC_\lambda,\res^{\fg}_{\fh}M).\end{equation}
All modules in $\cC(\fh)^\infty$ decompose into generalized weight spaces and the category decomposes into equivalent blocks corresponding to different eigenvalues. It suffices to consider one block. The block corresponding to 0 is exactly $\cF$ from Lemma \ref{lemabel} and equation \eqref{frobh} is thus a consequence of that lemma.
\end{proof}

\begin{proof}[Proof of Theorem \ref{finalresult}]
We apply Lemma~\ref{5lem2}, with $\cA$, $\cB$ and $\cB^0$ given by, respectively, $\fg$-mod, $\cW^\infty$ and $\cW$. The fact that $\cW^\infty$ is extension full in $\fg$-mod is therefore a consequence of Lemma~\ref{projCgh} for the special case $I=\fm$.

The fact that $\cO^\infty$ is extension full in $\fg$-mod is then an immediate consequence of the result for $\cW^\infty$ and Theorem~\ref{main}\eqref{main.2}.
\end{proof}

\subsection{Projective dimensions in $\cO^\infty$}\label{secOinf.2}

In this section we calculate projective dimensions inside category $\cO^\infty$ for modules in $\cO^I$.

\begin{theorem}\label{pdthm}
\begin{enumerate}[$($i$)$]
\item\label{pdthm.1} Consider $M\in\cO^I$ for some ideal $I'$ in $S(\fh)_{(0)}$ of finite codimension, with $\pd_{\cO^I}M<\infty$, then
$$\pd_{\cO^\infty}M\,\,=\,\,\dim\fh\,+\,\pd_{\cO^I}M.$$
\item\label{pdthm.2} 
The minimal projective dimension of a module in $\cO^\infty$ is $\dim\fh$.
\end{enumerate}
\end{theorem}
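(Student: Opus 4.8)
The plan is to compute projective dimensions in $\cO^\infty$ by combining the extension-fullness result of Theorem~\ref{finalresult} (which identifies $\Ext_{\cO^\infty}$ with $\Ext_{\fg}$) with a Frobenius-reciprocity reduction to Lie algebra cohomology of the abelian algebra $\fh$. The key structural fact to exploit is that $\fg$-$\mathrm{mod}$ has global dimension $\dim\fg$ (so that $\pd_{\cO^\infty}M\le\dim\fg$ for every $M$), and that the ``extra'' homological degrees over and above $\pd_{\cO^I}M$ come precisely from the $\dim\fh$ cohomological degrees carried by the Cartan.

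For part~\eqref{pdthm.1}, first I would use Theorem~\ref{finalresult} to replace $\Ext_{\cO^\infty}(M,-)$ by $\Ext_{\fg}(M,-)$ on the whole category, and reduce to the case where $M$ is one of the structural modules realizing $\pd_{\cO^I}M$. The heart of the matter is a spectral-sequence or Koszul-type identity relating $\Ext^\bullet_{\fg}$ and $\Ext^\bullet_{\cO^I}$. Concretely, I would set $e:=\pd_{\cO^I}M$ and show the two inequalities separately. For the upper bound $\pd_{\cO^\infty}M\le\dim\fh+e$, one resolves $M$ projectively in $\cO^I$ by modules of the form $U(\fg)\otimes_{S(\fh)}(S(\fh)/I\otimes\mC_\lambda)$, applies $\Ext_{\cO^\infty}(-,N)\cong\Ext_{\fg}(-,N)$ to each term, and uses Lemma~\ref{projCgh} together with Lemma~\ref{lemabel}/\ref{lemFrob} to see that each such projective has projective dimension exactly $\dim\fh$ in $\cO^\infty$ (its $\fg$-Ext reduces, by Frobenius reciprocity, to $\fh$-cohomology of a finite-dimensional module, which vanishes above degree $\dim\fh$). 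A standard long-exact-sequence/hyperhomology bookkeeping argument over the length-$e$ resolution then yields $\pd_{\cO^\infty}M\le\dim\fh+e$. For the matching lower bound, I would produce a module $N$ with $\Ext^{\dim\fh+e}_{\cO^\infty}(M,N)\ne 0$: take a simple (or structural) $N'$ witnessing $\Ext^e_{\cO^I}(M,N')\ne 0$, and ``twist'' it so that the top $\fh$-cohomology degree is picked up. The nonvanishing in top degree is controlled by Lemma~\ref{algcohom}, which gives $\dim H^{\dim\fh}(\fh,V)=\dim\Hom_\fh(V,\mC)\ne 0$ for appropriate $V$; combining this with the degree-$e$ witness in $\cO^I$ and the multiplicativity of the associated spectral sequence forces the top extension group to survive.

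Part~\eqref{pdthm.2} is then the special case $M$ simple (or $M=L(\lambda)$ with $\pd_{\cO^I}L=0$ realizable only in the semisimple situation); more directly, it suffices to exhibit one module of projective dimension exactly $\dim\fh$ and to show no module has smaller projective dimension. The lower bound $\pd_{\cO^\infty}M\ge\dim\fh$ for every nonzero $M$ follows because any nonzero $M\in\cO^\infty$, restricted to $\fh$, is a nonzero object of $\cC(\fh)^\infty$, and the computation $\Ext^{\dim\fh}_{\cC(\fh)^\infty}(-,\mC)\cong\Ext^{\dim\fh}_{\fh}(-,\mC)$ from Lemma~\ref{lemabel} forces a nonzero top $\fh$-cohomology class, which lifts via Frobenius reciprocity (Lemma~\ref{lemFrob}) to a nonzero class in $\Ext^{\dim\fh}_{\cO^\infty}$.

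The main obstacle I anticipate is the lower bound in part~\eqref{pdthm.1}: proving that the top extension group $\Ext^{\dim\fh+e}_{\cO^\infty}(M,N)$ is genuinely nonzero, rather than being killed by a differential, requires a careful analysis of the spectral sequence (or Koszul complex for $\fh$) that separates the $\fh$-direction from the $\cO^I$-direction. Establishing that the witness in $\cO^I$-degree $e$ and the top $\fh$-cohomology class combine without cancellation is the delicate point; the upper bound and part~\eqref{pdthm.2} are comparatively routine given Lemmas~\ref{lemabel}, \ref{lemFrob}, \ref{projCgh} and \ref{algcohom}.
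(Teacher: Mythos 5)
Your proposal has the right ingredients for the upper bound in part~\eqref{pdthm.1} (projectives of $\cO^I$ have projective dimension $\dim\fh$ in $\cO^\infty$, then resolve and do long-exact-sequence bookkeeping), but the two lower bounds both have genuine gaps, and in each case the missing ingredient is exactly what the paper supplies.

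For part~\eqref{pdthm.1}, the lower bound $\pd_{\cO^\infty}M\ge \dim\fh+\pd_{\cO^I}M$ is the crux, and you leave it unresolved: the spectral sequence ``separating the $\fh$-direction from the $\cO^I$-direction'' is never constructed, and there is no obvious candidate, since the inclusion $\cO^I\subset\cO^\infty$ does not arise from a ring quotient in any way that hands you a change-of-rings spectral sequence with identifiable $E_2$-page; the non-cancellation issue you flag is precisely the open point. The paper avoids spectral sequences entirely. It first sharpens the statement about projectives to a precise dimension count (Lemma~\ref{helppd}, equation~\eqref{extrext}): $\dim\Ext^{\dim\fh}_{\cO^\infty}(P^I(\lambda),L(\mu))=n_I\,\delta_{\lambda,\mu}$, i.e.\ top extensions of indecomposable projectives are ``diagonal''. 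The only delicate case is then $p=\pd_{\cO^I}M=1$: if $\pd_{\cO^\infty}M$ were only $\dim\fh$, the connecting map $\Ext^{\dim\fh}_{\cO^\infty}(P,L(\mu))\to\Ext^{\dim\fh}_{\cO^\infty}(P',L(\mu))$ would be surjective for all $\mu$, and the diagonal formula forces the inclusion $P'\hookrightarrow P$ to split (via an endomorphism argument on $P'$), contradicting $\pd_{\cO^I}M=1$. Once $p=0$ and $p=1$ are settled, the cases $p\ge 2$ follow from the elementary inequalities \eqref{pd1} and \eqref{pd2} by induction --- no witness-construction or multiplicativity is needed. Your plan does not isolate $p=1$ as the only hard case, nor does it contain the dimension formula that makes the splitting argument work.

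For part~\eqref{pdthm.2}, your argument bounds the wrong quantity. Frobenius reciprocity (Lemma~\ref{lemFrob}) gives $\Ext^d_{\cC(\fh)^\infty}(K,\res^{\fg}_{\fh}M)\cong\Ext^d_{\cW^\infty}(\ind^{\fg}_{\fh}K,M)$, with $M$ in the \emph{second} argument; a nonvanishing class there bounds the projective dimension of the induced module $\ind^{\fg}_{\fh}K$ (or the injective dimension of $M$), not $\pd_{\cO^\infty}M$. To get $M$ into the first argument one needs the restricted duality $\star$ of Subsection~\ref{secPrel.6}: the paper shows $\Ext^{\dim\fh}_{\cO^\infty}(\widetilde{M}_{n,1}(\lambda),M^\star)\neq 0$ for $\lambda$ a maximal weight of $M$ (this is where your Frobenius/Lie-cohomology reasoning correctly applies), and only then invokes \eqref{extdual} to convert this into $\Ext^{\dim\fh}_{\cO^\infty}\bigl(M,\widetilde{M}_{n,1}(\lambda)^\star\bigr)\neq 0$, whence $\pd_{\cO^\infty}M\ge\dim\fh$. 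Without this duality step your part~\eqref{pdthm.2} proves nothing about projective dimension. (A further minor slip: the projective objects of $\cO^I$ are summands of the truncated modules $\widetilde{M}_{n,k}(\lambda)$ of Proposition~\ref{projexp}, not the $\cW^I$-projectives $U(\fg)\otimes_{S(\fh)}(S(\fh)/I\otimes\mC_\lambda)$ you resolve by; Lemma~\ref{projCgh} concerns the latter.)
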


Before proving this, we note that this results yields the projective dimension of all structural modules from 
a regular block of $\cO$ 
(that is simple, standard, costandard, tilting, injective, projective modules) inside the category $\cO^\infty$ 
by using the results in \cite{MR2366357, MR2602033}. In particular, we have the following corollary.

\begin{corollary} \label{pdcor}
Consider $\lambda\in\fh^\ast$ to be integral regular dominant. 
Then for $w\in W$ we have:
\begin{enumerate}[$($a$)$]
\item $\pd_{\cO^\infty} L(w\cdot \lambda)=\dim\fg-l(w)$,
\item $\pd_{\cO^\infty} M(w\cdot\lambda)=\dim\fh+l(w)$.
\end{enumerate}
\end{corollary}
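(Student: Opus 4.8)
The plan is to deduce the corollary directly from Theorem~\ref{pdthm}\eqref{pdthm.1}, reducing everything to projective dimensions inside the usual category $\cO$, which are already known from the earlier papers \cite{MR2366357,MR2602033} in the series. First I would observe that, since $\lambda$ is integral and regular, the weights $w\cdot\lambda$ for $w\in W$ all lie in the single regular block $\cO_{\chi_\lambda}$, and that the relevant situation is the case $I=\fm$, for which $\cO^I=\cO$. Because $\cO$ has finite global dimension (Proposition~\ref{gldim}), every module in $\cO$ has finite projective dimension, so the hypothesis $\pd_{\cO^I}M<\infty$ of Theorem~\ref{pdthm}\eqref{pdthm.1} is automatically satisfied for $M=L(w\cdot\lambda)$ and for $M=M(w\cdot\lambda)$. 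Applying Theorem~\ref{pdthm}\eqref{pdthm.1} then gives
$$\pd_{\cO^\infty}L(w\cdot\lambda)=\dim\fh+\pd_{\cO}L(w\cdot\lambda),\qquad \pd_{\cO^\infty}M(w\cdot\lambda)=\dim\fh+\pd_{\cO}M(w\cdot\lambda).$$

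Next I would insert the known values of the projective dimensions in a regular block of $\cO$, as computed in \cite{MR2366357,MR2602033}: namely $\pd_{\cO}M(w\cdot\lambda)=l(w)$ and $\pd_{\cO}L(w\cdot\lambda)=\dim\fg-\dim\fh-l(w)$. Here I would use that $\dim\fg-\dim\fh=2\,l(w_0)$ is exactly the global dimension of $\cO$ (Proposition~\ref{gldim}), hence the maximal projective dimension attained; the extreme cases are useful consistency checks, since $w=e$ produces the projective Verma $M(\lambda)$ (projective dimension $0$) and the finite-dimensional $L(\lambda)$ (maximal projective dimension), while $w=w_0$ gives the antidominant simple Verma $L(w_0\cdot\lambda)=M(w_0\cdot\lambda)$, for which the two formulas agree on the value $l(w_0)$. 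Substituting these into the two equalities above yields
$$\pd_{\cO^\infty}L(w\cdot\lambda)=\dim\fh+\bigl(\dim\fg-\dim\fh-l(w)\bigr)=\dim\fg-l(w)$$
together with $\pd_{\cO^\infty}M(w\cdot\lambda)=\dim\fh+l(w)$, which are precisely statements (a) and (b).

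Since the corollary is an essentially immediate application of Theorem~\ref{pdthm}\eqref{pdthm.1}, I do not expect a genuine obstacle in the argument itself; all the real work sits in that theorem. The only points demanding care are bookkeeping ones: one must fix the normalization of the dot action so that $\lambda$ dominant indeed makes $M(\lambda)$ projective and $L(\lambda)$ finite-dimensional, and then quote the matching form of the projective-dimension formulas for $\cO$ from \cite{MR2366357,MR2602033}. One should also note that passing to the single block $\cO_{\chi_\lambda}$ changes nothing, since $\cO$ decomposes as a direct product of its blocks and projective dimensions are computed blockwise.
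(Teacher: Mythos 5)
Your proposal is correct and follows essentially the same route as the paper: the paper likewise derives the corollary by combining Theorem~\ref{pdthm} with the known projective dimensions in a regular block of $\cO$, namely $\pd_{\cO}M(w\cdot\lambda)=l(w)$ and $\pd_{\cO}L(w\cdot\lambda)=2l(w_0)-l(w)=\dim\fg-\dim\fh-l(w)$, quoted from \cite[Theorem~6.9]{MR2428237} and \cite[Propositions~3 and 6]{MR2366357}. Your additional remarks (finiteness of $\mathrm{gl.dim}\,\cO$ guaranteeing the hypothesis of Theorem~\ref{pdthm}, and the consistency checks at $w=e$ and $w=w_0$) are accurate bookkeeping that the paper leaves implicit.
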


\begin{proof}
This is a consequence of the combination of Theorem~\ref{pdthm} with either  \cite[Theorem~6.9]{MR2428237} or   \cite[Propositions~3 and 6]{MR2366357}.
\end{proof}

The remainder of this subsection is devoted to proving Theorem~\ref{pdthm}.

\begin{lemma} \label{helppd}
For a projective  $P\in \cO^I$ we have $\pd_{\cO^\infty}P=\dim\fh$ and
\begin{equation}\label{extrext}\dim\Ext^{\dim\fh}_{\cO^\infty}(P^I(\lambda),L(\mu))=n_I \;\delta_{\lambda,\mu}\quad\forall\; \,\lambda,\mu\in\fh^\ast,
\end{equation}
for $n_I$ the dimension of the socle of the $\fh$-module~$S(\fh)/I$.
\end{lemma}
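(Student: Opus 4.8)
The plan is to transport the whole computation into $\fg$-mod, where $\Ext$ can be evaluated by Frobenius reciprocity, and then read off the answer from the cohomology of the abelian algebra $\fh$. Since $\cO^\infty$ is extension full in $\fg$-mod by Theorem~\ref{finalresult}, and every object of $\cO^\infty$ has finite length, it suffices to understand $\Ext^d_{\fg}(P,L(\mu))$ for $P$ projective in $\cO^I$ and all simple $L(\mu)$; we may take $P=P^I(\lambda)$ indecomposable. The key device is that $P^I(\lambda)$ is a direct summand of the induced module $M_{n,I}(\lambda)$ for $n\gg 0$, where $M_{n,I}(\lambda)$ is obtained from the definition of $M_{n,k}(\lambda)$ by replacing $V_{\lambda,k}$ with $S(\fh)/I\otimes\mC_\lambda$, together with the analogue of Proposition~\ref{projexp}. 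It is then enough to compute $\Ext^\bullet_{\fg}(M_{n,I}(\lambda),L(\mu))$ and afterwards isolate the contribution of $P^I(\lambda)$.

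First I would show that, for $n\gg 0$, the induced module collapses to an induction from $\fh$ at the level of $\Ext$ against simples. Applying $\Ext^\bullet_{\fg}(-,L(\mu))$ to the short exact sequence of Lemma~\ref{firstly}(iii), the submodule $U(\fg)\fn^n\otimes_{U(\fh)}(S(\fh)/I\otimes\mC_\lambda)$ admits a resolution by modules $U(\fg)\otimes_{U(\fh)}(S(\fh)/I\otimes\mC_{\mu'})$ with $\mu'=\lambda+(\text{sum of at least }n\text{ positive roots})$, exactly as in the proof of Lemma~\ref{secondly}. By Frobenius reciprocity each such term has vanishing $\Ext_\fg(-,L(\mu))$ once $n$ exceeds the (finitely many, by central character) relevant bounds, so the submodule is $\Ext_\fg(-,L(\mu))$-acyclic; the long exact sequence then yields
$$\Ext^d_{\fg}(M_{n,I}(\lambda),L(\mu))\;\cong\;\Ext^d_{\fg}\big(U(\fg)\otimes_{U(\fh)}(S(\fh)/I\otimes\mC_\lambda),L(\mu)\big)\;\cong\;\Ext^d_{\fh}\big(S(\fh)/I\otimes\mC_\lambda,\res^{\fg}_{\fh}L(\mu)\big),$$
the last step being Frobenius reciprocity for the exact induction $U(\fg)\otimes_{U(\fh)}-$.

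Next I would evaluate the $\fh$-side. Decomposing $\res^{\fg}_{\fh}L(\mu)=\bigoplus_\nu\mC_\nu^{\oplus\dim L(\mu)^\nu}$ into weight spaces and using that $\fh$ acts nilpotently on $S(\fh)/I$, the group $\Ext^d_{\fh}(S(\fh)/I\otimes\mC_\lambda,\mC_\nu)$ vanishes unless $\nu=\lambda$ (disjoint generalized $\fh$-weights), and it vanishes for $d>\dim\fh$; this already gives $\pd_{\cO^\infty}P^I(\lambda)\le\dim\fh$. For the top degree I would invoke Poincaré duality for Lie algebra cohomology (Lemma~\ref{algcohom}) with $V=(S(\fh)/I)^\ast$: since $\Ext^{\dim\fh}_{\fh}(S(\fh)/I,\mC)\cong H^{\dim\fh}(\fh,(S(\fh)/I)^\ast)$, its dimension equals $\dim\Hom_{\fh}((S(\fh)/I)^\ast,\mC)=\dim(S(\fh)/I)^{\fh}=n_I$, the socle dimension. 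Hence $\dim\Ext^{\dim\fh}_{\fg}(M_{n,I}(\lambda),L(\mu))=n_I\,\dim L(\mu)^\lambda$.

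Finally I would disentangle the individual projective. Writing the block part as $\widetilde M_{n,I}(\lambda)=\bigoplus_\nu\dim(L(\nu)^\lambda)\,P^I(\nu)$ from the analogue of Proposition~\ref{projexp} (summands outside the block $\chi_\lambda$ contribute nothing when $\chi_\mu=\chi_\lambda$, and everything vanishes otherwise), the computation above reads
$$\sum_\nu\dim(L(\nu)^\lambda)\,\dim\Ext^{\dim\fh}_{\fg}(P^I(\nu),L(\mu))=n_I\,\dim L(\mu)^\lambda,$$
with $\lambda,\nu,\mu$ ranging over the finite orbit $W\cdot\lambda$. The matrix $\big(\dim L(\nu)^\lambda\big)_{\lambda,\nu}$ is unitriangular for the dominance order (entries vanish unless $\lambda\le\nu$ and equal $1$ on the diagonal), hence invertible; since the very same matrix appears on the right-hand side, we conclude $\dim\Ext^{\dim\fh}_{\fg}(P^I(\nu),L(\mu))=n_I\,\delta_{\nu\mu}$, and because $n_I\ge 1$ the nonvanishing at $\mu=\nu$ upgrades the bound to $\pd_{\cO^\infty}P^I(\lambda)=\dim\fh$; a general projective follows by additivity. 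I expect the main obstacle to be precisely the collapse in the second paragraph: controlling the (a priori infinite) $\cW^I$-resolution of the $\fn^n$-submodule well enough to guarantee genuine $\Ext_\fg(-,L(\mu))$-acyclicity for a uniform $n$. Once that reduction to induction from $\fh$ is secured, the $\fh$-cohomology computation and the unitriangular inversion extracting $P^I(\lambda)$ are essentially formal.
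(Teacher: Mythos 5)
Your proposal is correct and follows essentially the same route as the paper's own proof: transporting the computation to $\fg$-mod via Theorem~\ref{finalresult}, using the short exact sequence of Lemma~\ref{firstly}(iii) together with the acyclic resolution from the proof of Lemma~\ref{secondly} and Frobenius reciprocity to collapse everything to $\Ext^\bullet_{\fh}$, invoking Poincar\'e duality (Lemma~\ref{algcohom}) in top degree, and finally extracting $P^I(\lambda)$ from $\widetilde{M}_{n,I}(\lambda)$ by inverting the unitriangular matrix $\bigl(\dim L(\nu)^\lambda\bigr)$, which is precisely the paper's ``induction along the Bruhat order''. The only cosmetic differences are that the paper restricts to the ideals $I_k$ for notational convenience while you treat general $I$, and that the step you flag as the main obstacle (acyclicity of the $\fn^n$-submodule) is resolved in the paper exactly as you suggest, by the weight argument $\mu\not\le\nu$ for $n$ large, uniform over the finitely many $\nu$ in the orbit.
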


\begin{proof}
For notational convenience we only consider the ideals $I^k$ in this proof. According to Proposition~\ref{projexp}, every projective module in $\cO^k$ is a direct summand of some $\widetilde{M}_{n,k}(\lambda)=\left(M_{n,k}(\lambda)\right)_{\chi_\lambda}$, where $\lambda\in\fh^\ast$ and $n\gg 0$. We prove first that $\pd_{\cO^\infty} \widetilde{M}_{n,k}(\lambda)\le\dim\fh$. 

Take $\nu\in\fh^\ast$ with $\chi_\nu=\chi_\lambda$. For $d>0$, applying \eqref{resultBuch} inside the category $\fg$-mod to the sequence
from  Lemma~\ref{firstly}\eqref{firstly.3} yields the exact sequence
\begin{multline*}
\Ext_{\fg}^{d-1}\big(U(\fg)\fn^n\otimes_{U(\fh)} V_{\lambda,k}, L(\nu)\big)\to 
\Ext^d_{\fg}\big( \widetilde{M}_{n,k}(\lambda),L(\nu)\big)\to\\
\to\Ext^d_{\fg}\big(U(\fg)\otimes_{U(\fh)}V_{\lambda,k},L(\nu)\big)\to 
\Ext_{\fg}^{d}\big(U(\fg)\fn^n\otimes_{U(\fh)} V_{\lambda,k}, L(\nu)\big).
\end{multline*}

We take the projective resolution of $U(\fg)\fn^n\bigotimes_{U(\fh)} V_{\lambda,k}$ 
in $\cW^k$ described in the proof of Lemma~\ref{secondly}. This resolution is given in terms of modules of the form $U(\fg)\otimes_{U(\fh)}V_{\mu,k}$ with all $\mu\not\le\nu$. As for such $\mu$ and for $p>0$ we have
\begin{equation}\label{eq681}
\Ext^p_{\fg}\big(U(\fg)\otimes_{U(\fh)}V_{\mu,k},L(\nu)\big)=
\Ext^p_{\fh}\big(V_{\mu,k},L(\nu)\big)=0,
\end{equation}
our resolution is an acyclic resolution for the functor $\Hom_{\fg}(-,L(\nu))$, which can be used to compute $\Ext^i_{\fg}(-,L(\nu))$. Since \eqref{eq681} is also true for $p=0$, we obtain that 
$$\Ext_{\fg}^{i}\big(U(\fg)\fn^n\otimes_{U(\fh)} V_{\lambda,k}, L(\nu)\big)= 0\quad\mbox{for }i\in\{d-1,d\}.$$

By Frobenius reciprocity we have
\begin{displaymath}
\Ext^d_{\fg}\big(U(\fg)\otimes_{U(\fh)}V_{\lambda,k},L(\nu)\big)\cong
\Ext^d_{\fh}\big(V_{\lambda,k},L(\nu)\big).
\end{displaymath}
By  Theorem~\ref{finalresult} we have
\begin{displaymath}
\Ext^d_{\fg}\big( \widetilde{M}_{n,k}(\lambda),L(\nu)\big)\cong
\Ext^d_{\cO^\infty}\big( \widetilde{M}_{n,k}(\lambda),L(\nu)\big).
\end{displaymath}
The above now implies that, for $d>0$,
\begin{equation}
\label{eqpdM}
\Ext^d_{\cO^\infty}( \widetilde{M}_{n,k}(\lambda),L(\nu))\cong\Ext^d_{\fh}\big(V_{\lambda,k},L(\nu)\big).
\end{equation}
If $\nu$ is not in the orbit of $\lambda$, then all extensions are zero. Since we have $\mathrm{gl.dim}\,(\fh\text{-}\mathrm{mod})=\dim\fh$, we get $\pd_{\cO^\infty}\widetilde{M}_{n,k}\le \dim\fh$. This bound then holds for arbitrary projective modules by Proposition~\ref{projexp}. The fact that this bound is an equality would follow if we could prove equation~\eqref{extrext}, so from now on we focus on that.

We apply equation \eqref{eqpdM} for $d=\dim\fh$, which yields
$$\Ext^{\dim\fh}_{\cO^\infty}( \widetilde{M}_{n,k}(\lambda),L(\nu))\cong\Ext^{\dim\fh}_{\fh}\big(V_{\lambda,k},L(\nu)\big).$$
Lemma \ref{algcohom} then implies
\begin{eqnarray*}
\dim\Ext^{\dim\fh}_{\cO^\infty}( \widetilde{M}_{n,k}(\lambda),L(\nu))&=& \dim\Hom_{\fh}( L(\nu),V_{\lambda,k})\\
&=&n_k\dim\left(L(\nu)^{\lambda}\right),
\end{eqnarray*}
where $n_k$ is the dimension of the socle of $V_{\lambda,k}$. Comparison with Proposition~\ref{projexp} then yields
$$\dim\Ext^{\dim\fh}_{\cO^\infty}( \widetilde{M}_{n,k}(\lambda),L(\mu))=n_k\,\Hom_{\cO^\infty}( \widetilde{M}_{n,k}(\lambda),L(\mu))$$
for all weights $\lambda,\mu$. We claim this implies equation \eqref{extrext}. Indeed, for $\lambda$ dominant, this follows from $\widetilde{M}_{n,k}(\lambda)=P^k(\lambda)$ by Proposition~\ref{projexp}. The full statement then follows by induction along the Bruhat order and Proposition~\ref{projexp}.
\end{proof}

\begin{proof}[Proof of Theorem~\ref{pdthm}]
We prove claim \eqref{pdthm.1} by induction on the projective dimension of $M$ inside $\cO^I$. If the projective dimension is zero, the result follows from Lemma~\ref{helppd}. Now assume $\pd_{\cO^I}M=1$, then there are projective modules $P'$ and $P$ in $\cO^I$ such that we have $P'\hookrightarrow P\tto M$. We denote by $\alpha:P'\hookrightarrow P$ the corresponding morphism between the projective modules, so $M={\rm coker}(\alpha)$. Equation \eqref{pd2} and the result for projective modules implies $\pd_{\cO^\infty}M\le \dim\fh+1$. We can consider the following  part of the long exact sequence~\eqref{resultBuch}
$$\Ext^{\dim\fh}_{\cO^\infty}(P,K)\to \Ext^{\dim\fh}_{\cO^\infty}(P',K)\to\Ext^{\dim\fh+1}_{\cO^\infty}(M,K)\to 0.$$
Therefore we find that $\pd_{\cO^\infty}M=\dim\fh+1$ unless there is a surjection 
$$\Ext^{\dim\fh}_{\cO^\infty}(P,L(\mu))\tto \Ext^{\dim\fh}_{\cO^\infty}(P',L(\mu))$$
for all $\mu\in\fh^\ast$. By Lemma~\ref{helppd} this would imply that $P\cong P'\oplus Q$ for another projective module $Q$ in $\cO^I$. We can choose this direct sum such that under the above surjection $\Ext^{\dim\fh}_{\cO^\infty}(Q,L(\mu))$ gets mapped to zero. Composing $\alpha$ with projection onto the direct summand isomorphic to $P'$ in $P$ then yields an endomorphism ${\widetilde\alpha} \in {\rm End}_{\cO^I}( P')$, such that the induced endomorphism on 
$$\bigoplus_{\mu\in\fh^\ast}\Ext^{\dim\fh}_{\cO^\infty}(P',L(\mu))$$
is an isomorphism. By considering indecomposable direct summands of~$P'$ and Lemma \ref{helppd}, this implies that $\alpha$ always maps an indecomposable projective module in $P'$ to an isomorphic projective module. Assume that $\widetilde\alpha$ is not an isomorphism, then there is a natural number $l$ for which $(\widetilde\alpha)^l\in {\rm End}_{\cO^I}( P')$ annihilates an indecomposable direct summand of $P'$. By Lemma \ref{helppd}, this contradicts the fact that $(\widetilde\alpha)^l$ must still induce an isomorphism on the extensions. Hence $\widetilde\alpha$ is an isomorphism, meaning that the short exact sequence splits and $M\cong Q$ is projective in $\cO^I$, a contradiction with $\pd_{\cO^I}M=1$. Thus the claim is also true for $p=1$.

Now take $p>1$ and assume the result holds for all projective dimensions up to $p-1$. For $M\in\cO^I$, with $\pd_{\cO^I}M=p$, there is a $P$, projective in $\cO^I$, and an $N\in\cO^I$, with $\pd_{\cO^I}N=p-1$, such that $N\hookrightarrow P\tto M$. Formulae~\eqref{pd1} and \eqref{pd2} yield
\begin{eqnarray*}
p+\dim\fh-1&\le&\max\{\dim\fh\,,\,\pd_{\cO^\infty}M\,-1\}\\
\pd_{\cO^\infty} M&\le&\max\{p+\dim\fh\,,\,\dim\fh\},
\end{eqnarray*}
which implies $\pd_{\cO^\infty}M=p+\dim\fh$ if $p>1$ but only $\pd_{\cO^\infty}M\le 1+\dim\fh$ if $p=1$. To include the case $p=1$

Now we focus on claim \eqref{pdthm.2}. Consider $M\in\cO^\infty$, it is a module of finite length with all simple subquotients from $\cO$, therefore it has a weight $\lambda$ such that $M^\lambda\not=0$ and $M$ contains no vectors of higher weights. Similarly to the proof of Lemma~\ref{helppd} (as in formula~\eqref{eqpdM}) we have $$\Ext^{\dim\fh}_{\cO^\infty}(\widetilde{M}_{n,1}(\lambda),M^\star)\cong\Ext^{\dim\fh}_{\fh}\left(\mC_\lambda,M^\star\right)\cong H^{\dim\fh}(\fh,\mC_{-\lambda}\otimes M^\star).$$ 
This is non-zero by Lemma \ref{algcohom}. Equation \eqref{extdual} therefore implies that $M$ has projective dimension at least $\dim\fh$.
\end{proof}

\subsection{Projective dimensions in $\cW^\infty$}\label{secOinf.3}

\begin{theorem}\label{pdthmW}
Let $I'$ be an ideal in $S(\fh)_{(0)}$ of finite codimension.
If $M\in\cW^I$ satisfies $\pd_{\cW^I}M<\infty$, then
$$\pd_{\cW^\infty}M\,\,=\,\,\dim\fh\,+\,\pd_{\cW^I}M.$$
\end{theorem}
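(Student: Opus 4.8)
The plan is to mirror the argument for category $\cO$ in Theorem~\ref{pdthm}\eqref{pdthm.1}, since the proof there was essentially formal once the base case (projective modules) and the relevant extension computation were in hand. The strategy is an induction on $\pd_{\cW^I}M$. First I would establish the base case: if $P\in\cW^I$ is projective, then $\pd_{\cW^\infty}P=\dim\fh$, together with a computation of the top extension group $\Ext^{\dim\fh}_{\cW^\infty}(P,L)$ against simple objects $L$ of $\cW$. This is the analogue of Lemma~\ref{helppd}, and I expect it to be the principal technical step. The key inputs are Theorem~\ref{finalresult} (so that $\Ext^\bullet_{\cW^\infty}$ may be replaced by $\Ext^\bullet_{\fg}$ on projectives), Frobenius reciprocity in the form of Lemma~\ref{lemFrob}, and the fact that $\mathrm{gl.dim}(\fh\text{-}\mathrm{mod})=\dim\fh$ with $H^{\dim\fh}(\fh,-)\neq 0$ via Lemma~\ref{algcohom}. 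Concretely, taking $P\cong U(\fg)\otimes_{S(\fh)}(S(\fh)/I\otimes\mC_\lambda)$ and reducing along adjunction should yield
$$\Ext^d_{\cW^\infty}(P,M)\cong\Ext^d_{\fh}\big(S(\fh)/I\otimes\mC_\lambda,\res^{\fg}_{\fh}M\big),$$
whose right-hand side vanishes for $d>\dim\fh$ and is nonzero and computable at $d=\dim\fh$.

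With the base case in hand, the inductive step proceeds exactly as in the proof of Theorem~\ref{pdthm}. For $\pd_{\cW^I}M=p>1$, I would choose a short exact sequence $N\hookrightarrow P\tto M$ with $P$ projective in $\cW^I$ and $\pd_{\cW^I}N=p-1$, and apply the inequalities \eqref{pd1} and \eqref{pd2} together with the inductive hypothesis $\pd_{\cW^\infty}N=\dim\fh+(p-1)$ to force $\pd_{\cW^\infty}M=\dim\fh+p$. This part is purely formal and does not use any structure specific to $\cW$ beyond the existence of enough projectives in each $\cW^I$ (guaranteed since $U(\fg)\otimes_{U(\fh)}V_{\lambda,k}$ is projective in $\cW^k$) and finiteness of length.

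The delicate point, again as in Theorem~\ref{pdthm}, is the case $p=1$, where the inequalities only give $\pd_{\cW^\infty}M\le\dim\fh+1$. I would resolve it by the same endomorphism argument: writing $M=\mathrm{coker}(\alpha)$ for $\alpha:P'\hookrightarrow P$, one analyses the induced surjection $\Ext^{\dim\fh}_{\cW^\infty}(P,L(\mu))\tto\Ext^{\dim\fh}_{\cW^\infty}(P',L(\mu))$ and uses the precise form of the top extension group from the base case to show that if this surjection were to make the connecting map vanish, then $\alpha$ would have to split, contradicting $\pd_{\cW^I}M=1$.

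The main obstacle I anticipate is the base-case extension computation, and specifically verifying that the Frobenius reciprocity reduction to $\fh$-cohomology is valid at the level of $\cW^\infty$ rather than $\cW^I$; here the passage through the direct-limit description of $\Ext_{\cW^\infty}$ in Proposition~\ref{extinfty}\eqref{extinfty.2} and Lemma~\ref{lemFrob} must be handled with care. In contrast to the $\cO^\infty$ case, there is no need for the subtle socle bookkeeping of Proposition~\ref{projexp}, since the projective generators of $\cW^I$ are directly of induced form and adjunction applies cleanly; I therefore expect the $\cW$ argument to be somewhat more transparent than its $\cO$ counterpart.
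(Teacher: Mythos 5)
Your proposal follows the paper's own proof essentially verbatim: the paper likewise establishes the base case $\pd_{\cW^\infty}P=\dim\fh$ for projective $P\in\cW^I$ from Lemma~\ref{lemFrob} and the algebra cohomology of $\fh$, and then notes that the induction (including the delicate $p=1$ endomorphism argument) carries over identically from the proof of Theorem~\ref{pdthm}. Your closing observation that the $\cW$-setting avoids the socle bookkeeping of Proposition~\ref{projexp}, since projectives in $\cW^I$ are directly of induced form, matches the paper's assertion that the base case ``follows quickly'' from these two ingredients.
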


\begin{proof}
From Lemma~\ref{lemFrob} and the algebra cohomology of $\fh$ it follows quickly that the projective dimension of projective modules in $\cW^I$ is equal to $\dim\fh$. The result then follows identically as in the 
proof of Theorem~\ref{pdthm}.
\end{proof}

\subsection{Basic classical Lie superalgebras}\label{secOinf.4}

In this subsection we consider basic classical Lie superalgebras, we refer to \cite{Musson} for definitions. We will denote a basic classical Lie superalgebra by $\widetilde{\fg}$ and the underlying Lie algebra of $\widetilde{\fg}$ by $\fg$. An important property of these Lie superalgebras is that the Cartan subalgebra of $\widetilde\fg$ is equal to the one of~$\fg$. Therefore we have natural analogues of the categories introduced in Subsection \ref{secPrel.5} and we denote the corresponding categories by $\widetilde{\cO}^I$, $\widetilde{\cW}^I$ etc.

\begin{theorem}\label{super}
For a basic classical Lie superalgebra $\widetilde{\fg}$  we have:
\begin{enumerate}[$($i$)$]
\item\label{super.1} The BGG category $\widetilde{\cO}$ is extension full in $\widetilde{\cW}$.
\item\label{super.2} The categories $\widetilde{\cO}^\infty$ and $\widetilde{\cW}^\infty$ are extension full in $\widetilde{\fg}$-mod.
\end{enumerate}
\end{theorem}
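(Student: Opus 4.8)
The plan is to reduce the superalgebra statements to the already-established Lie-algebra results of Theorem~\ref{main} and Theorem~\ref{finalresult}, exploiting the fact that the Cartan subalgebra $\fh$ of $\widetilde{\fg}$ coincides with that of the underlying Lie algebra $\fg$. The key structural observation is that $\widetilde{\fg}$ is finite dimensional over $\fg$: as a $\fg$-module (via the adjoint action), $U(\widetilde{\fg})$ is free of finite rank over $U(\fg)$ by the PBW theorem for Lie superalgebras. Consequently the restriction functor $\res\colon\widetilde{\fg}\text{-mod}\to\fg\text{-mod}$ and the induction functor $\ind=U(\widetilde{\fg})\otimes_{U(\fg)}-$ are both exact, and induction is left adjoint to restriction. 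Because $U(\widetilde{\fg})$ is free of finite rank over $U(\fg)$, restriction also admits an exact left adjoint and the forgetful functor sends $\widetilde{\cO}^I$, $\widetilde{\cW}^I$, etc., into the corresponding categories $\cO^I$, $\cW^I$ for $\fg$.

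First I would handle part~\eqref{super.1}. Since $\widetilde{\cO}$ is a Serre subcategory of $\widetilde{\cW}$ (weight conditions and local $U(\widetilde\fb)$-finiteness are both closed under subquotients), the maps $\varphi^0$ and $\varphi^1$ are automatically isomorphisms, and by Lemma~\ref{5lem1} it suffices to compare $\Ext^d$ between simple objects. The simple objects of $\widetilde{\cO}$ are highest-weight modules $\widetilde{L}(\lambda)$, and I would compute their extensions via a change-of-rings (Grothendieck) spectral sequence relating $\Ext^\bullet_{\widetilde{\cW}}$ to $\Ext^\bullet_{\cW}$ through the finite free extension $U(\fg)\subset U(\widetilde{\fg})$. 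The point is that $\res\,\widetilde{L}(\lambda)$ lies in $\cO$, which by Theorem~\ref{main}\eqref{main.1} is extension full in $\cW$; combining this with the identical spectral sequence computed one level higher (for $\widetilde{\cW}$ inside $\widetilde{\fg}$-mod versus $\cW$ inside $\fg$-mod) forces the comparison maps to agree. Concretely, I would prove that induction carries projectives of $\cW^I$ to projectives of $\widetilde{\cW}^I$, and that these induced projectives remain acyclic for $\Hom_{\widetilde{\fg}\text{-mod}}(-,K)$, so that the criterion of Corollary~\ref{genprop} applies verbatim in the super setting once the Lie-algebra acyclicity of Lemma~\ref{projCgh} is transported upward through the finite free extension.

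The cleanest route to part~\eqref{super.2} is to imitate the proof of Theorem~\ref{finalresult} directly. The analogues of Lemma~\ref{lemabel}, Lemma~\ref{lemFrob} and Lemma~\ref{projCgh} all refer only to the Cartan $\fh$, which is unchanged, so Lemma~\ref{lemabel} applies without modification. For the Frobenius-reciprocity input I would induce from $\fh$ all the way up to $\widetilde{\fg}$ (rather than to $\fg$); since $U(\widetilde{\fg})$ is free over $U(\fh)$ by PBW, the adjunction $(\ind^{\widetilde{\fg}}_{\fh},\res^{\widetilde{\fg}}_{\fh})$ gives the super-analogue of \eqref{eq976}, and the projective $U(\widetilde{\fg})\otimes_{S(\fh)}(S(\fh)/I\otimes\mC_\lambda)$ in $\widetilde{\cW}^I$ satisfies the super-analogue of \eqref{frobh} by reduction to Lemma~\ref{lemabel}. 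With these in hand, Lemma~\ref{5lem2} applies with $\cA=\widetilde{\fg}\text{-mod}$, $\cB=\widetilde{\cW}^\infty$ and $\cB^0=\widetilde{\cW}$, yielding that $\widetilde{\cW}^\infty$ is extension full in $\widetilde{\fg}$-mod; that $\widetilde{\cO}^\infty$ is extension full then follows from part~\eqref{super.1} together with Corollary~\ref{inftyabstract}, exactly as in the final paragraph of the proof of Theorem~\ref{finalresult}.

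The main obstacle I anticipate is verifying that the relevant induced modules in the super setting are genuinely projective and, more delicately, that they remain \emph{acyclic} for the $\Hom$ functor computed over the larger ambient category $\widetilde{\fg}$-mod. In the Lie-algebra case this rested on the explicit projective resolution of $U(\fg)\fn^n\otimes_{U(\fh)}V_{\lambda,k}$ by modules $U(\fg)\otimes_{U(\fh)}V_{\mu,k}$ built in the proof of Lemma~\ref{secondly}, together with the vanishing \eqref{eq681} controlled by the weights $\mu$ occurring. The odd part of $\widetilde{\fn}$ contributes only finitely many extra weights $\mu$ (the PBW monomials in odd root vectors are square-zero and hence finite in number), so the same ``large $n$'' argument should bound away all weights appearing in a fixed simple $\widetilde{L}(\nu)$; but one must check that central characters of $\widetilde{\fg}$ still restrict $\nu$ to a finite Weyl-group-type orbit so that only finitely many $\nu$ need be controlled at once. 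Confirming this finiteness—i.e.\ that the super Harish-Chandra homomorphism still forces $\Ext^d(\widetilde{M}_{n,k}(\lambda),\widetilde{L}(\nu))$ to vanish for $n\gg0$—is where the genuine (if routine) work lies; everything else is a faithful transcription of the Lie-algebra arguments through the finite free extension $U(\fg)\subset U(\widetilde{\fg})$.
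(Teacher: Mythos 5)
Your reduction strategy---pushing everything through the finite free extension $U(\fg)\subset U(\widetilde{\fg})$ using exactness of induction and restriction, Frobenius reciprocity for $\Ext$, Corollary~\ref{genprop} for part~(i), and a super analogue of Lemma~\ref{projCgh} combined with Lemma~\ref{5lem2} and Corollary~\ref{inftyabstract} for part~(ii)---is precisely the skeleton of the paper's proof, and your treatment of part~(ii) matches it step for step. The problem lies in your final paragraph, which misidentifies where the content of part~(i) sits and defers the proof to a step that is both unnecessary and, as you state it, false.

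You claim the ``genuine work'' is to establish a super analogue of Lemma~\ref{secondly}, i.e.\ vanishing of $\Ext^d$ from super analogues of the modules $M_{n,k}(\lambda)$ to simples $\widetilde{L}(\nu)$, relying on the super Harish-Chandra homomorphism to confine $\nu$ to a ``finite Weyl-group-type orbit''. First, no such argument is needed: the whole point of inducing from $\fg$ is that every projective $\widetilde{P}$ of $\widetilde{\cO}$ is a direct summand of $\ind^{\widetilde{\fg}}_{\fg}P$ for some $P$ projective in $\cO$, whence $\Ext^d_{\widetilde{\cW}}(\widetilde{P},K)$ is a direct summand of $\Ext^d_{\widetilde{\cW}}(\ind^{\widetilde{\fg}}_{\fg}P,K)\cong\Ext^d_{\cW}(P,\res^{\widetilde{\fg}}_{\fg}K)$, and the latter vanishes for $d>0$ by Corollary~\ref{proj}, because $\res^{\widetilde{\fg}}_{\fg}K$ lies in $\cO$ whenever $K\in\widetilde{\cO}$. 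All weight and linkage combinatorics stay downstairs, where Theorem~\ref{main} and Corollary~\ref{proj} are already proved; no super central character theory enters anywhere. Second, the finiteness you call routine actually fails: for an atypical central character of a basic classical Lie superalgebra, infinitely many highest weights share that central character (linkage classes are not finite Weyl-group orbits in the super setting), so the step ``take the maximum of $n$ over the finitely many $\nu$ in the orbit'' in the proof of Lemma~\ref{secondly} does not transcribe, and the route you sketch would break exactly there. A smaller related slip: for part~(i) the criterion of Corollary~\ref{genprop} requires acyclicity of projectives of $\widetilde{\cO}$ for $\Hom_{\widetilde{\cW}}(-,K)$ with $K\in\widetilde{\cO}$, whereas acyclicity of projectives of $\widetilde{\cW}^I$ for $\Hom$ computed in $\widetilde{\fg}$-mod is the input for part~(ii) (the super analogue of Lemma~\ref{projCgh}); conflating the two is what pushes you to look for vanishing results upstairs instead of quoting Corollary~\ref{proj} downstairs.
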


\begin{proof}
Consider a projective module $\widetilde{P}$ in $\widetilde{\cO}$. 
It is a direct summand of $\mathrm{Ind}_{\fg}^{\widetilde{\fg}}P$ for a projective module $P$ in $\cO$. 
Using the Frobenius reciprocity, we have
$$\Ext^d_{\widetilde{\mathcal{W}}}(\ind^{\widetilde{\fg}}_{\fg} P,M)=
\Ext^d_{{\mathcal{W}}}(P,\res^{\widetilde{\fg}}_{\fg}  M),$$
which is zero for $d>0$ by Corollary \ref{proj}. Claim~\eqref{super.1} now follows from Corollary~\ref{genprop}.

The same reasoning can be used to obtain the extension fullness of $\widetilde{\cO}^I$ into $\widetilde{\cW}^I$, for every ideal $I'$ in $S(\fh)_{(0)}$ of finite codimension. Therefore, Corollary~\ref{inftyabstract} implies that $\widetilde{\cO}^\infty$ is extension full in $\widetilde{\cW}^\infty$. 

Lemma~\ref{projCgh} can be generalized immediately to basic classical Lie superalgebras, since their Cartan subalgebra coincides with the one of the underlying Lie algebra (alternatively, one can use the fact that projective modules in $\widetilde{\cW}^I$ are induced from projective modules in $\cW^I$ and Lemma~\ref{projCgh}). The fact that $\widetilde{\cW}^\infty$ is extension full in $\widetilde{\fg}$-mod then follows from Lemma~\ref{5lem2}.
\end{proof}

\section{Singular blocks in category $\cO$}\label{secsing}

\subsection{Singular blocks in category $\cO$}\label{secsing.1}

Let $\lambda$ be a dominant integral weight for $\mathfrak{g}$ and $W_{\lambda}$ denote the stabilizer of 
$\lambda$ in $W$ with respect to the dot action. Let $w_0$ be the longest element in $W$ and $w^{\lambda}_0$
be the longest element in $W_{\lambda}$. We also denote  by $\mathtt{a}:W\to\mathbb{N}$ 
Lusztig's $\mathtt{a}$-function, see \cite{Lu1,Lu2}.

Consider the corresponding singular block $\mathcal{O}_{\lambda}=\cO_{\chi_\lambda}$.
Then we have the usual exact functors of translation out of and onto the $W_{\lambda}$-wall:
\begin{displaymath}
\theta^{\mathrm{out}}: \mathcal{O}_{\lambda}\to\mathcal{O}_{0}\quad\text{ and }\quad  
\theta^{\mathrm{on}}: \mathcal{O}_{0}\to\mathcal{O}_{\lambda},
\end{displaymath}
see \cite{BG} for details. These functors satisfy 
\begin{equation}\label{eq775}
\theta^{\mathrm{on}}\theta^{\mathrm{out}}\cong
\mathrm{Id}_{\mathcal{O}_{\lambda}}^{\oplus|W_{\lambda}|}.
\end{equation}
Furthermore, the functor 
$\theta^{\mathrm{out}}\theta^{\mathrm{on}}$ is the unique indecomposable projective endofunctor of 
$\mathcal{O}_{0}$ sending $M(0)$ to the projective cover of $L(w^{\lambda}_0\cdot 0)$. This functor
is usually denoted $\theta_{w^{\lambda}_0}$. The main result of this section is the following observation.

\begin{theorem}\label{thm777}
\begin{enumerate}[$($i$)$]
\item\label{thm777.1} The projective dimension of the simple Verma mo\-du\-le in $\mathcal{O}_{\lambda}$
equals $\mathtt{a}(w_0w^{\lambda}_0)$. 
\item\label{thm777.2} We have $\mathrm{gl.dim}\,\mathcal{O}_{\lambda}=2\mathtt{a}(w_0w^{\lambda}_0)$. 
\item\label{thm777.3} The projective dimension of the dominant simple module in $\mathcal{O}_{\lambda}$
equals $2\mathtt{a}(w_0w^{\lambda}_0)$. 
\end{enumerate}
\end{theorem}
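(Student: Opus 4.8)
The plan is to reduce all three statements to projective-dimension computations in the regular block $\cO_0$ by means of the translation functors, and then to read off Lusztig's $\mathtt{a}$-function from the homological behaviour of the indecomposable projective functor $\theta_{w^{\lambda}_0}$.

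First I would record the basic reduction. Since $\theta^{\mathrm{out}}$ and $\theta^{\mathrm{on}}$ are exact and biadjoint, they descend to an adjoint pair of functors on the bounded derived categories; via \eqref{eqVerdier} this yields, for all $d$, natural isomorphisms $\Ext^d_{\cO_0}(\theta^{\mathrm{out}}X,Y)\cong\Ext^d_{\cO_\lambda}(X,\theta^{\mathrm{on}}Y)$ and $\Ext^d_{\cO_\lambda}(\theta^{\mathrm{on}}Y,X)\cong\Ext^d_{\cO_0}(Y,\theta^{\mathrm{out}}X)$. Combining the first of these with \eqref{eq775} gives, for $X,Z\in\cO_\lambda$, a natural isomorphism $\Ext^d_{\cO_\lambda}(X,Z)^{\oplus|W_\lambda|}\cong\Ext^d_{\cO_0}(\theta^{\mathrm{out}}X,\theta^{\mathrm{out}}Z)$. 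Using both adjunctions one then checks the two-sided bound $\pd_{\cO_\lambda}X=\pd_{\cO_0}\theta^{\mathrm{out}}X$ for every $X\in\cO_\lambda$, which turns each of (i)--(iii) into a statement about $\cO_0$.

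Second I would identify the images under $\theta^{\mathrm{out}}$ of the two modules in question. Recall that $\theta^{\mathrm{on}}L(y\cdot0)=L(y\cdot\lambda)$ when $y$ is the longest element of $yW_\lambda$ and vanishes otherwise. Since $w_0$ is the longest element of $w_0W_\lambda$ and $w^{\lambda}_0$ the longest element of $W_\lambda$, the simple Verma satisfies $M(w_0\cdot\lambda)=L(w_0\cdot\lambda)=\theta^{\mathrm{on}}L(w_0\cdot0)$ and the dominant simple satisfies $L(\lambda)=\theta^{\mathrm{on}}L(w^{\lambda}_0\cdot0)$. Applying $\theta^{\mathrm{out}}$ and using $\theta^{\mathrm{out}}\theta^{\mathrm{on}}=\theta_{w^{\lambda}_0}$, statements (i) and (iii) become $\pd_{\cO_0}\theta_{w^{\lambda}_0}L(w_0\cdot0)=\mathtt{a}(w_0w^{\lambda}_0)$ and $\pd_{\cO_0}\theta_{w^{\lambda}_0}L(w^{\lambda}_0\cdot0)=2\mathtt{a}(w_0w^{\lambda}_0)$, respectively. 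Both are projective dimensions, inside the Koszul block $\cO_0$, of the projective functor $\theta_{w^{\lambda}_0}$ applied to an extreme simple module.

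The hard part is the third step: extracting $\mathtt{a}(w_0w^{\lambda}_0)$ from these projective dimensions. Here I would pass to the graded (Koszul) version of $\cO_0$, so that $\Ext^i$ between simples is concentrated in internal degree $i$ and $\pd$ becomes the top internal degree carrying a nonzero extension into a simple. In this setting the graded lift of $\theta_{w^{\lambda}_0}$ acts, on graded Grothendieck groups, through the Kazhdan--Lusztig basis element indexed by $w^{\lambda}_0$, and by the very definition of $\mathtt{a}$ the degrees occurring in the resulting structure constants are bounded by, and become sharp on the relevant two-sided cell at, the value $\mathtt{a}(w_0w^{\lambda}_0)$ (\cite{Lu1,Lu2}). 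The two computations are related by the self-duality $\star$ of the block, which forces the minimal projective resolution of the dominant simple to be symmetric of length $2\mathtt{a}(w_0w^{\lambda}_0)$, i.e. twice the ``radius'' $\mathtt{a}(w_0w^{\lambda}_0)$ realised by the antidominant simple Verma. The genuine obstacle is to show that the extremal degree is not merely an upper bound but is actually realised, that is, that the top $\Ext$-group is nonzero; this is exactly where the defining inequality for $\mathtt{a}$ together with its sharpness on cells must be matched against the homological degree. As a sanity check, for $w^{\lambda}_0=e$ one recovers $\mathtt{a}(w_0)=\ell(w_0)$ and the known values $\pd_{\cO_0}M(w_0\cdot0)=\ell(w_0)$, $\pd_{\cO_0}L(0)=2\ell(w_0)$, while for $w^{\lambda}_0=w_0$ one gets $\mathtt{a}(e)=0$ and a semisimple block.

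Finally, for (ii) I would combine the reduction with (iii). By the first step $\mathrm{gl.dim}\,\cO_\lambda=\max\pd_{\cO_0}\theta_{w^{\lambda}_0}L(y\cdot0)$, the maximum being over the simples of $\cO_\lambda$. The upper bound $\mathrm{gl.dim}\,\cO_\lambda\le2\mathtt{a}(w_0w^{\lambda}_0)$ follows from the degree estimate of the previous step applied uniformly to all $\theta_{w^{\lambda}_0}L(y\cdot0)$, and the matching lower bound is precisely (iii). The self-duality $\star$ then explains why the maximum is attained at the dominant simple $L(\lambda)$, whose projective cover is the (shortest) dominant Verma and which is therefore the homologically deepest object of the block, exactly as the trivial module realises $\mathrm{gl.dim}\,\cO_0$ in the regular case.
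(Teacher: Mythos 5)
Your first two steps are correct and coincide with the paper's own reduction: the biadjunction of the exact functors $\theta^{\mathrm{out}}$, $\theta^{\mathrm{on}}$ together with \eqref{eq775} does give $\pd_{\cO_\lambda}X=\pd_{\cO_0}\theta^{\mathrm{out}}X$, and the identifications $\theta^{\mathrm{out}}L\cong\theta_{w^{\lambda}_0}L(w_0\cdot 0)$ (for the simple Verma $L$) and $\theta^{\mathrm{out}}L(\lambda)\cong\theta_{w^{\lambda}_0}L(w^{\lambda}_0\cdot 0)$ are exactly the paper's starting point. The genuine gap is your third step, which is where the entire content of the theorem lives. Extracting $\mathtt{a}(w_0w^{\lambda}_0)$ from "degrees in the structure constants of the Kazhdan--Lusztig basis" acting on the graded Grothendieck group cannot work as stated: the class of $\theta_{w^{\lambda}_0}L(w_0\cdot 0)$ in the Grothendieck group determines graded multiplicities of composition factors, but not $\Ext$-groups, and in particular not the projective dimension. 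You acknowledge this yourself when you write that the "genuine obstacle" is to show the top $\Ext$-group is nonzero --- but that obstacle is precisely the theorem, so the proposal is a reduction plus a heuristic, not a proof. The same objection applies to your claim that the self-duality $\star$ "forces" the resolution of the dominant simple to have length $2\mathtt{a}(w_0w^{\lambda}_0)$ ($\star$ fixes simples and exchanges projectives with injectives; it gives no symmetry statement about a minimal projective resolution), and to the uniform upper bound you invoke for part (ii).

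What the paper does instead is to close this gap with two specific structural inputs. For (i), it observes that $\theta_{w^{\lambda}_0}L(w_0\cdot 0)$ is the indecomposable \emph{tilting} module $T(w_0w^{\lambda}_0\cdot 0)$ in $\cO_0$, and then quotes \cite[Theorem~17]{MR2602033}, which says $\pd_{\cO_0}T(w\cdot 0)=\mathtt{a}(w)$; your translation argument then transports this value to $\cO_\lambda$ exactly as you propose. For (ii) and (iii), rather than estimating degrees, the paper applies parabolic--singular Koszul duality \cite{BGS}: a simple of projective dimension $p$ in $\cO_\lambda$ corresponds to an indecomposable injective of graded length $p+1$ in the parabolic category $\cO_0^{W_\lambda}$; the simple Verma goes to the dominant costandard module (graded length $\mathtt{a}(w_0w^{\lambda}_0)+1$ by (i)), the dominant simple goes to the antidominant injective, which is projective-injective, and the facts from \cite[Section~3]{MR2602033} --- all projective-injectives in $\cO_0^{W_\lambda}$ have the same graded length, namely $2\mathtt{a}(w_0w^{\lambda}_0)+1$, and every projective embeds into one --- deliver both the exact value in (iii) and the global bound in (ii). Without these inputs (or a proof of \cite[Theorem~17]{MR2602033} from scratch, which is itself a nontrivial theorem), your outline does not establish any of the three statements.
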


\begin{proof}
Let $L$ be the simple Verma module in $\mathcal{O}_{\lambda}$. Then $L\cong \theta^{\mathrm{on}} L(w_0\cdot 0)$
and hence
\begin{displaymath}
\theta^{\mathrm{out}}L\cong  \theta^{\mathrm{out}}\theta^{\mathrm{on}} L(w_0\cdot 0)=
\theta_{w^{\lambda}_0}L(w_0\cdot 0)
\end{displaymath}
is the indecomposable tilting module $T( w_0w^{\lambda}_0\cdot 0)$ in $\mathcal{O}_0$ 
with highest weight $ w_0w^{\lambda}_0\cdot 0$. By \cite[Theorem~17]{MR2602033}, the projective 
dimension of $T( w_0w^{\lambda}_0\cdot 0)$ equals $\mathtt{a}( w_0w^{\lambda}_0)$. On the other hand,
from \eqref{eq775} it follows that $\theta^{\mathrm{on}}T( w_0w^{\lambda}_0\cdot 0)$ is a direct sum of copies of $L$.
As projective functors are exact and send projective modules to projective modules, it follows that
\begin{eqnarray*}
\pd_{\cO_\lambda} L=\pd_{\cO_\lambda} L^{\oplus |W_\lambda|}= \pd_{\cO_\lambda}\theta^{on} T( w_0w^{\lambda}_0\cdot 0)&\le& \pd_{\cO_0}T( w_0w^{\lambda}_0\cdot 0); \\
\pd_{\cO_0}T( w_0w^{\lambda}_0\cdot 0) =\pd_{\cO_0}\theta^{out}L&\le &\pd_{\cO_\lambda}L.\end{eqnarray*}
Hence the projective dimensions of $L$ and $T( w_0w^{\lambda}_0\cdot 0)$ coincide, proving claim~\eqref{thm777.1}.

The parabolic-singular Koszul duality from \cite{BGS} asserts that the Koszul dual of $\mathcal{O}_{\lambda}$
is the parabolic subcategory $\mathcal{O}^{W_{\lambda}}_0$ of $\mathcal{O}_0$ associated to 
$W_{\lambda}$. We use the normalization of Koszul duality which maps simple objects to indecomposable 
injective objects. By the graded length of a module we mean the number of non-zero graded components with 
respect to Koszul grading. Then Koszul duality maps a simple module of projective dimension $p$ to an 
indecomposable injective  module of graded length  $p+1$ and reverses the quasi-hereditary order.
Therefore Koszul duality maps $L$ to the dominant costandard module in $\mathcal{O}^{W_{\lambda}}_0$.
We denote the latter module by $M$, which thus has graded length $\mathtt{a}(w_0w^{\lambda}_0)+1$, by claim~\eqref{thm777.1}. The injective envelope $I$ of the dual module $M^{\star}$ (the dominant standard module) 
is known to be projective-injective and hence tilting, see e.g. \cite[Section~3]{MR2602033}. This is the
only tilting module which contains the dominant simple as a subquotient. Therefore $I$ is the tilting module
associated to the standard module $M^{\star}$, i.e. we have a (unique up to a nonzero scalar) 
injection $M^{\star}\hookrightarrow I$ and a (unique up to a nonzero scalar)
surjection $I\tto M$ and the image of the composition of these two maps coincides with the simple socle of $M$. As
the socles of $I$ and $M^{\star}$ agree and, at the same time, the heads of $I$ and $M$ agree, it
follows that 
\begin{displaymath}
\mathrm{graded\,\,\, length}(I)=
\mathrm{graded \,\,\,length}(M)+\mathrm{graded\,\,\, length}(M^{\star})-1.
\end{displaymath}
Clearly, the graded lengths of $M$ and $M^\star$ coincide. In \cite[Section~3]{MR2602033} it is shown that all
projective-injective modules in $\mathcal{O}^{W_{\lambda}}_0$ have the same graded length and that each projective
module is a submodule of a projective-injective module. This implies that the maximal graded length of an
indecomposable injective module in $\mathcal{O}^{W_{\lambda}}_0$ is $2\mathtt{a}( w_0w^{\lambda}_0)+1$
which implies claim~\eqref{thm777.2}.

Claim~\eqref{thm777.3} follows from the fact that Koszul duality maps the dominant simple module in $\mathcal{O}_{\lambda}$ to the antidominant injective in the parabolic category and the latter module
is automatically projective and hence has maximal graded length, as mentioned in the previous paragraph.
This completes the proof.
\end{proof}

\subsection{The $\mathfrak{sl}_3$-examples}\label{secsing.2}

As described in \cite[Section 5.2.1]{St}, any non-trivial singular integral block of category~$\mathcal{O}$
for $\mathfrak{sl}_3$ is equivalent to the category of modules over the following quiver with relations:
\begin{equation}\label{eq734}
\xymatrix{ 
1\ar@/^/[rr]^{a}&&2\ar@/^/[rr]^{c}\ar@/^/[ll]^{b}&&3\ar@/^/[ll]^{d}
}\qquad cd=0,\,\, ab=dc.
\end{equation}
Let $L_i$ for $i=1,2,3$ be simple modules corresponding to vertices in this quiver and $P_i$ be their
projective covers. Then $P_3$, $P_2$ and $P_1$ have the following Loewy structure,
respectively:
\begin{displaymath}
\xymatrix@!=0.6pc{
&&L_3\ar@{-}[dl]&&&L_2\ar@{-}[dl]\ar@{-}[dr]&&&L_1\ar@{-}[dr]&&\\
&L_2\ar@{-}[dl]&&&L_1\ar@{-}[dr]&&L_3\ar@{-}[dl]&&&L_2\ar@{-}[dl]\ar@{-}[dr]&\\
L_1&&&&&L_2\ar@{-}[dl]&&&L_1\ar@{-}[dr]&&L_3\ar@{-}[dl]\\
&&&&L_1&&&&&L_2\ar@{-}[dl]&\\
&&&&&&&&L_1&&
}  
\end{displaymath}
A direct computation thus implies
\begin{displaymath}
\mathrm{pd}\,L_1=1,\quad \mathrm{pd}\,L_2=\mathrm{pd}\,L_3=2.
\end{displaymath}
In particular, the global dimension of this module category equals $2$.
All this fully agrees with Theorem~\ref{thm777} and with \cite{MR2065514}. 

Further, it is straightforward to check that the minimal projective resolution of $L_3$ has the following form:
\begin{displaymath}
0\to P_3\to P_2\to P_3\to L_3\to 0.
\end{displaymath}
This implies that we have 
\begin{equation}\label{eq736}
\mathrm{Ext}^2(L_3,L_3)\neq 0
\end{equation}
in this module category.

From the calculation of projective dimensions and the quiver it follows that the Serre subcategory generated by the simple module $L_3$ is an initial segment. This follows from the calculation of the projective dimensions and
\begin{displaymath}
 \mathrm{Ext}^1(L_1,L_3)=\mathrm{Ext}^1(L_3,L_1)=0,
\end{displaymath}
because the vertices $1$ and $3$ in the quiver \eqref{eq734} are not connected. Since $\Ext^1(L_3,L_3)=0$, 
this initial segment is semi-simple.
However, it is not extension full by \eqref{eq736}.

Hence we find that there is a singular integral block in category $\cO$ which is not Guichardet.

\subsection{Open questions for singular blocks}\label{secsing.3}

It would be interesting to generalize the explicit description of homological invariants for 
structural modules in the block $\mathcal{O}_0$ described in \cite{MR2366357,MR2602033}, including 
projective dimension of simple, standard, indecomposable tilting and indecomposable injective modules,
to the singular case. Theorem~\ref{thm777} already makes some steps in this direction. An observation from 
Subsection \ref{secsing.2} is that, contrary to regular blocks,
the projective dimension of simple modules in singular blocks is no longer strictly monotone along the
Bruhat order. This is precisely the origin of the failure of the block to be Guichardet. In this example
these projective dimensions are still weakly monotone, which implies that the block will satisfy
a weaker notion of Guichardet category, studied in \cite{Fu1}.

The projective dimension of structural modules in singular blocks (and parabolic category $\cO$) 
will be studied in more detail in \cite{SHPO4}. 
One of the consequences is that, in general, also the weaker version of the Alexandru conjecture, as studied in
\cite{Fu1}, is not true for singular blocks in category $\cO$. In fact, it turns out that 
the projective dimensions of simple modules in singular blocks are even not weakly monotone along
the Bruhat order.

Using the same arguments as in the proof of Theorem~\ref{thm777} one shows that computation of 
projective dimension of simple modules in $\mathcal{O}_{\lambda}$ is equivalent to
computation of projective dimension in $\mathcal{O}_0$ of the modules
$\theta_{w^{\lambda}_0}L(w\cdot 0)$ where $w$ is a longest coset representative in 
$W/W_{\lambda}$. This is a special case of the general open question in \cite[Problem~24]{MR2602033}.

\section{Regular blocks of  $\cO$ and $\cO^\infty$ are Guichardet}\label{secwac}

Our main result in this section is the following.

\begin{theorem}\label{WAC}
Let $\fg$ be a semisimple complex Lie algebra.
\begin{enumerate}[$($i$)$]
\item\label{WAC.1} 
For $\chi$ a regular central character, both categories $\cO_\chi$ and $\cO^\infty_\chi$ are Guichardet.
\item\label{WAC.2} 
For $\theta$ a singular central character, the categories $\cO_\theta$ is not always Guichardet.
\end{enumerate}
\end{theorem}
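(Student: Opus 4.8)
The plan is to treat the two parts separately, disposing of~\eqref{WAC.2} at once and concentrating on~\eqref{WAC.1}. For~\eqref{WAC.2} I would simply invoke the singular block analysed in Subsection~\ref{secsing.2}: for $\mathfrak{sl}_3$ the non-trivial singular integral block is the module category of the quiver~\eqref{eq734}, the Serre subcategory generated by $L_3$ is checked there to be an initial segment, and yet $\Ext^2(L_3,L_3)\neq 0$ by~\eqref{eq736}, so this initial segment fails to be extension full. Hence $\cO_\theta$ is not Guichardet for the corresponding singular $\theta$, which is precisely what~\eqref{WAC.2} asserts.

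For~\eqref{WAC.1} and the block $\cO_\chi$ I would first reduce to a regular integral block, which is legitimate since any regular block is equivalent, as a highest weight category, to a regular integral block with poset the integral Weyl group under the Bruhat order; all combinatorial input below then holds with $W$ replaced by the integral Weyl group. By Lemma~\ref{5lem1} extension fullness may be tested on simple objects, so the real task is to identify the initial segments, and for this I would extract two standard consequences of the Kazhdan--Lusztig conjecture (collected in~\cite{MR2428237}, compatibly with Corollary~\ref{pdcor} and Theorem~\ref{pdthm}). First, the projective dimension is strictly monotone along the Bruhat order, $\pd_{\cO_\chi}L(w\cdot\lambda)=(\dim\fg-\dim\fh)-l(w)$, so that $\pd L(w'\cdot\lambda)=\pd L(w\cdot\lambda)-1$ occurs exactly when $l(w')=l(w)+1$. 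Second, $\Ext^1_{\cO_\chi}(L(w\cdot\lambda),L(w'\cdot\lambda))\neq 0$ for every Bruhat covering $w\lessdot w'$; one reads this off as the $q^1$-coefficient of the graded Cartan number $[P(w):L(w')]_q=\sum_z[\Delta(z):L(w)]_q\,[\Delta(z):L(w')]_q$, whose sole contribution in degree one comes from $z=w$ (using Koszulity of the regular integral block).

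These two facts make the defining closure condition of an initial segment coincide with upward closure in the Bruhat order: whenever $L(w\cdot\lambda)\in\cI$ and $w'$ covers $w$, the projective dimension drops by one and $\Ext^1\neq 0$, forcing $L(w'\cdot\lambda)\in\cI$, and iterating along saturated chains shows that $\{w:L(w\cdot\lambda)\in\cI\}$ is a coideal. Since each standard module $\Delta(w)=M(w\cdot\lambda)$ has all its composition factors indexed by elements $\ge w$, the Serre subcategory attached to a coideal contains the relevant standards and is itself a highest weight category, so the Cline--Parshall--Scott theorem~\cite{CPS1} shows it is extension full in $\cO_\chi$; thus every initial segment is extension full and $\cO_\chi$ is Guichardet. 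For the thick block $\cO^\infty_\chi$ the simple objects are the same and, by Theorem~\ref{pdthm}\eqref{pdthm.1}, the projective dimension is merely shifted by $\dim\fh$, so both the monotonicity and the covering behaviour persist; moreover $\varphi^1$ is injective, because a non-split extension of objects of $\cO_\chi$ stays non-split in the full overcategory $\cO^\infty_\chi$, while the new first extensions between \emph{distinct} simples do not appear (the extra extensions in $\cO^\infty_\chi$ are self-extensions arising from the generalized $\fh$-action, cf. the methods of Section~\ref{secOinf}), and these are discarded by the condition $\pd L'=\pd L-1$. Consequently the initial segments of $\cO^\infty_\chi$ are generated by exactly the same coideals $S$.

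It then remains to prove that, for a coideal $S$, the Serre subcategory $\cI^\infty:=\cO^\infty_\chi[S]$ is extension full in $\cO^\infty_\chi$. Exactly as Theorem~\ref{main}\eqref{main.2} was deduced from Theorem~\ref{main}\eqref{main.1}, I would use Corollary~\ref{inftyabstract} to reduce this to the finite statement that $\cI^I:=\cO^I[S]$ is extension full in $\cO^I$ for every ideal $I$, and then invoke Corollary~\ref{genprop}: both categories have enough projectives, so it suffices to show that every projective object of $\cI^I$ is acyclic for $\Hom_{\cO^I}(-,K)$ with $K\in\cI^I$, that is $\Ext^{>0}_{\cO^I}(P,K)=0$. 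This is where I expect the main obstacle to lie: since $\cO^I$ has infinite global dimension and is no longer a highest weight category, the theorem of~\cite{CPS1} does not apply, and one must instead describe the projective covers in $\cI^I$ as explicit quotients of the projectives $P^I(w\cdot\lambda)$ and control their higher self-extensions inside $\cO^I$, in the spirit of the computations with the modules $M_{n,k}(\lambda)$ in Section~\ref{secO}. Once this acyclicity is in place, Corollary~\ref{genprop} yields $\cI^I$ extension full in $\cO^I$, Corollary~\ref{inftyabstract} upgrades it to $\cI^\infty$ extension full in $\cO^\infty_\chi$, and~\eqref{WAC.1} follows.
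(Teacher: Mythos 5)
Your handling of part~(\ref{WAC.2}) and of the block $\cO_\chi$ in part~(\ref{WAC.1}) matches the paper's argument: the $\mathfrak{sl}_3$ singular block of Subsection~\ref{secsing.2} disproves the Guichardet property, and for regular $\cO_\chi$ the identification of initial segments with coideals in the (integral) Weyl group, via the projective dimension formula $\pd_{\cO}L(w)=2l(w_0)-l(w)$ and the Kazhdan--Lusztig description of the $\Ext^1$-quiver after Soergel's equivalence \cite{SoergelD}, followed by an appeal to \cite[Theorem~3.9(i)]{CPS1}, is exactly Lemma~\ref{initseg}(\ref{initseg.1}) and the first half of the paper's proof.

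The genuine gap is in your treatment of $\cO^\infty_\chi$. You correctly reduce, via Corollary~\ref{inftyabstract}, to the finite statement that the Serre subcategory $\cO^I_\Gamma$ attached to an ideal $\Gamma$ of the weight poset is extension full in $\cO^I$, but you then leave this key step unproven: you flag it yourself as ``the main obstacle'' and only sketch a computational program (describing projective covers in the initial segment and controlling their higher extensions in the spirit of the modules $M_{n,k}(\lambda)$) that is not carried out and whose feasibility is unclear. The paper closes precisely this step with a citation your proposal misses: indecomposable blocks of $\cO^I$ are \emph{stratified} algebras in the sense of \cite[Definition~2.2.1]{CPS}, by \cite[Lemma~8 and Theorem~7]{Soergel} or \cite[Corollary~9(a)]{KKM}, and the Cline--Parshall--Scott theory for stratified algebras (specifically \cite[Equation~2.1.2.1]{CPS}, the generalization of \cite{CPS1} beyond the quasi-hereditary setting) gives directly that $\Ext^j_{\cO^I_\Gamma}(M,N)\to\Ext^j_{\cO^I}(M,N)$ is an isomorphism; Corollary~\ref{inftyabstract} then finishes the argument. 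Your observation that \cite{CPS1} itself does not apply because $\cO^I$ is not a highest weight category is correct, but the stratified version does apply, and without it (or an actual proof of your acyclicity claim) your argument for the thick block is incomplete. A secondary, more minor soft spot: your claim that all new $\Ext^1$'s in $\cO^\infty_\chi$ between simples are self-extensions is asserted only by a gesture toward Section~\ref{secOinf}; the paper proves it in Lemma~\ref{initseg}(\ref{initseg.2}) by noting that a length-two module with simple top $L(\lambda')$ and socle $L(\lambda)$, $\lambda'\not\le\lambda$, is a quotient of the Verma module $M(\lambda')$ and hence lies in $\cO$, the remaining case being handled by the duality $\star$.
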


The first step in proving Theorem \ref{WAC} is determining initial segments in the categories $\cO$ and $\cO^\infty$.

A {\em coideal} $\Gamma_W$ in the Weyl group $W$, with respect to the Bruhat order $\geq$, is a subset of $W$ such that $w'\ge w$ and $w\in \Gamma_W$ imply $w' \in \Gamma_W$. We use the same conventions for the Bruhat order as in  \cite[Section~0.4]{MR2428237}.

An {\em ideal} $\Gamma$ in $\fh^\ast$ is a subset such that $\lambda'\le\lambda$ and $\lambda\in\Gamma$ imply $\lambda'\in\Gamma$. For an integral dominant $\lambda\in\fh^\ast$, we have $w\cdot\lambda\ge w'\cdot\lambda$ if and only if $w\le w'$, so there is a one to one correspondence between coideals in $W$ and ideals in $\fh^\ast$ contained in the orbit of a fixed integral regular weight.

\begin{lemma}\label{initseg}
\begin{enumerate}[$($i$)$]
\item\label{initseg.1} 
Consider a regular central character $\chi$. The initial segments in $\cO_\chi$ are the full Serre subcategories generated by a set 
of modules of the form $\{L(\lambda)|\lambda\in \Gamma\}$, for $\Gamma$ some ideal in $\{\lambda\in\fh^\ast\,|\,\chi_\lambda=\chi\}$.
\item\label{initseg.2} 
The initial segments in $\cO^\infty$ are the Serre subcategories generated by the initial segments in $\cO$.
\end{enumerate}
\end{lemma}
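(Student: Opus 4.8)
The plan is to prove, for a regular block, that the closure condition defining an initial segment is exactly downward-closedness in the weight order (equivalently, coidealness in $W$), and then to derive (ii) from the fact that passing from $\cO$ to $\cO^\infty$ shifts every projective dimension by the constant $\dim\fh$ while leaving $\Ext^1$ between non-isomorphic simples unchanged. For (i), write the simples of $\cO_\chi$ as $L(w\cdot\lambda)$, $w\in W$, for a fixed regular integral dominant $\lambda$. Combining Theorem~\ref{pdthm} with Corollary~\ref{pdcor} gives $\pd_{\cO_\chi}L(w\cdot\lambda)=\dim\fg-\dim\fh-l(w)=2l(w_0)-l(w)$, strictly decreasing in $l(w)$; hence for $L=L(x\cdot\lambda)$ and $L'=L(y\cdot\lambda)$ one has $\pd L'=\pd L-1$ if and only if $l(y)=l(x)+1$. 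The second ingredient is the shape of the $\Ext$-quiver of a regular block: for $l(y)=l(x)+1$ one has $\Ext^1_{\cO_\chi}(L(x\cdot\lambda),L(y\cdot\lambda))\neq 0$ if and only if $y$ covers $x$ in the Bruhat order (equivalently $x<y$), a standard consequence of Kazhdan--Lusztig theory and the Koszulity of regular blocks \cite{BGS,MR2428237} (nonvanishing of $\Ext^1$ between simples forces Bruhat comparability, and every covering relation contributes a one-dimensional $\Ext^1$).

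Given these two facts, the equivalence ``initial segment $\Leftrightarrow$ ideal'' is pure bookkeeping. If $\Gamma$ is an ideal of weights and $L(\mu)$ lies in the Serre subcategory it generates, with $\mu=x\cdot\lambda\in\Gamma$, then any $L'=L(\nu)$ with $\nu=y\cdot\lambda$, $\pd L'=\pd L-1$ and $\Ext^1(L,L')\neq 0$ satisfies $l(y)=l(x)+1$ and $x<y$, so $\nu<\mu$ and $\nu\in\Gamma$; thus $\Gamma$ satisfies the closure condition. Conversely, given an initial segment with simple set $S_\cI$, I would show that $\{\mu:L(\mu)\in S_\cI\}$ is downward closed: for $\nu<\mu$ pick a saturated Bruhat chain $x=u_0\lessdot u_1\lessdot\cdots\lessdot u_k=y$ with $\mu=x\cdot\lambda$ and $\nu=y\cdot\lambda$; at each step $\pd L(u_{i+1}\cdot\lambda)=\pd L(u_i\cdot\lambda)-1$ and $\Ext^1(L(u_i\cdot\lambda),L(u_{i+1}\cdot\lambda))\neq 0$, so the closure condition propagates membership from $L(\mu)$ down to $L(\nu)$. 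Together with the correspondence between ideals in the orbit and coideals in $W$ recalled above, this proves (i).

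For (ii), recall that $\cO^\infty$ and $\cO$ have the same simple objects and that ${\rm gl.dim}\,\cO^\infty=\dim\fg$ (Corollary~\ref{gldiminfty}), so initial segments are defined. By Theorem~\ref{pdthm}\eqref{pdthm.1} applied to simples, $\pd_{\cO^\infty}L=\pd_\cO L+\dim\fh$, so the relation $\pd L'=\pd L-1$ (which again forces $L\not\cong L'$) is the same in $\cO$ and in $\cO^\infty$. It therefore suffices to prove $\Ext^1_{\cO^\infty}(L,L')\neq 0\Leftrightarrow\Ext^1_\cO(L,L')\neq 0$ for non-isomorphic simples. The map $\varphi^1$ is always injective, since a splitting in $\cO^\infty$ of an extension with terms in the full subcategory $\cO$ already lies in $\cO$; this gives one implication. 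For the other I would establish the Künneth-type isomorphism
$$\Ext^\bullet_{\cO^\infty}(L,L')\;\cong\;\Ext^\bullet_{\cO}(L,L')\otimes H^\bullet(\fh,\mC),$$
whose degree-one part is $\Ext^1_\cO(L,L')\oplus\big(\Hom_\cO(L,L')\otimes\fh^\ast\big)$; for $L\not\cong L'$ the second summand vanishes. Hence the closure conditions of $\cO$ and $\cO^\infty$ agree on the common set of simples, so a set of simples generates an initial segment of $\cO^\infty$ exactly when it generates one of $\cO$. Since this argument never uses regularity, it also covers singular blocks.

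The $\Ext$-quiver description is the only nontrivial external input for (i); once it is granted, the propagation along Bruhat chains is routine. The real work in (ii) is the displayed isomorphism, which is the main obstacle, and it is genuinely needed because $\cO$ is \emph{not} a Serre subcategory of $\cO^\infty$ (simples acquire self-extensions parametrised by $\fh^\ast$). I would prove it by the resolution method of Lemmas~\ref{lemabel} and~\ref{helppd}: splice a minimal projective resolution of $L'$ in $\cO^I$ with the Koszul resolution of $\mC$ over $S(\fh)$ to obtain a projective resolution in $\cO^\infty$ whose $\Hom$-complex factors as the tensor product of the $\cO$-complex computing $\Ext^\bullet_\cO(L,L')$ with the Chevalley--Eilenberg complex computing $H^\bullet(\fh,\mC)=\Lambda^\bullet\fh^\ast$; extension fullness in $\fg$-mod (Theorem~\ref{finalresult}) then guarantees that these represent the genuine $\cO^\infty$-extensions.
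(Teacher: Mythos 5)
Your overall reduction in part (ii) — same simple objects, uniform shift of projective dimensions by $\dim\fh$ via Theorem~\ref{pdthm}, so that only $\Ext^1$ between non-isomorphic simples needs to be compared — is sound and matches the paper's logic. But the key lemma you propose to carry it out, the K\"unneth-type isomorphism $\Ext^\bullet_{\cO^\infty}(L,L')\cong\Ext^\bullet_{\cO}(L,L')\otimes H^\bullet(\fh,\mC)$, is false. Take $\fg=\mathfrak{sl}_2$ and $L=L'=L(0)=\mC$ the trivial module. In any short exact sequence $0\to\mC\to M\to\mC\to 0$ of $\fg$-modules, $\fg$ acts on $M$ through an abelian quotient of $\fg$, and $[\fg,\fg]=\fg$ forces the action to be trivial; hence the sequence splits in $\fg$-mod, and by fullness the splitting lies in $\cO^\infty$. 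So $\Ext^1_{\cO^\infty}(\mC,\mC)=0$, whereas your formula predicts $\Hom(\mC,\mC)\otimes\fh^\ast\cong\fh^\ast\neq 0$; it also fails in degree two, where Theorem~\ref{finalresult} gives $\Ext^2_{\cO^\infty}(\mC,\mC)\cong H^2(\mathfrak{sl}_2,\mC)=0$ against the predicted $\Ext^2_{\cO}(\mC,\mC)=\mC$. In particular your parenthetical claim that simples acquire self-extensions parametrised by $\fh^\ast$ is wrong for finite-dimensional simples (it does hold, e.g., for simple Verma modules, which is why $\cO$ is indeed not Serre in $\cO^\infty$). This is not a cosmetic defect of the sketch: what the splicing construction can honestly produce is a spectral sequence $\Ext^p_{\cO}(L,L')\otimes\Lambda^q\fh^\ast\Rightarrow\Ext^{p+q}_{\cO^\infty}(L,L')$, and the example above shows its differentials are non-zero, i.e. the $\Hom$-complex does \emph{not} factor as a tensor product of complexes (also note $\cO^\infty$ has no projective objects at all, so ``a projective resolution in $\cO^\infty$'' must be reinterpreted from the start). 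The paper avoids all of this with an elementary weight argument: if $0\to L(\lambda)\to M\to L(\lambda')\to 0$ is exact in $\cO^\infty$ and $\lambda'\not\le\lambda$, then the generalized weight space $M^{\lambda'}$ is one-dimensional and $\fn$-invariant, so by the universal property of Verma modules $M$ is a quotient of $M(\lambda')$ (or the sequence splits) and therefore lies in $\cO$; the case $\lambda'<\lambda$ reduces to this one by the duality $\star$. Hence all extensions new to $\cO^\infty$ are loops, which the initial-segment condition never sees.

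Your part (i) is essentially the paper's argument (the same Kazhdan--Lusztig facts about the $\Ext^1$-quiver, plus propagation along saturated Bruhat chains), but it only treats integral $\chi$: for a non-integral regular central character there is no regular integral dominant $\lambda$ with $\chi_\lambda=\chi$, the blocks of $\cO_\chi$ are governed by the integral Weyl group, and the formula $\pd_{\cO_\chi}L(w\cdot\lambda)=2l(w_0)-l(w)$ is then no longer valid. The paper closes this case by invoking Soergel's equivalence \cite{SoergelD} of each such block with a regular integral block (possibly for a different Lie algebra), an equivalence preserving the highest weight structure; you need this, or a direct argument with the integral Weyl group, for the statement as claimed.
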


\begin{proof}
We start with the principal block $\cO_0$ and show that the initial segments are the 
full Serre subcategories generated by a set 
of modules of the form $\{L(w\cdot 0)|w\in \Gamma_W\}$, for $\Gamma_W$ some coideal in $W$.

By \cite[Proposition~6]{MR2366357} we have $\pd_{\cO} L(w)=2l(w_0)-l(w)$.
The $\Ext^1$-quiver of $\cO_0$ is known as a consequence of the Kazhdan-Lusztig conjecture, see e.g.  \cite[Section~7]{twisting}. In particular, we have
\begin{itemize}
\item if $w'\ge w$ and $l(w')=l(w)+1$, then $\Ext^1_{\cO}(L(w),L(w'))\not=0$;
\item if $l(w')=l(w)+1$ and $w$ and $w'$ are not comparable, then $\Ext^1_{\cO}(L(w),L(w'))=0$.
\end{itemize}
The first property implies that every initial segment in $\cO_0$ corresponds to a coideal in $W$. The second property implies that every coideal in $W$ corresponds to an initial segment. This proves claim $(i)$ for $\cO_0$.

Next, we consider an indecomposable block inside $\cO_{\chi}$, for $\chi$ a non-integral regular central character. By \cite{SoergelD}, such a block is equivalent to some regular integral indecomposable block in category $\cO$ (possibly for a different Lie algebra), where the equivalence preserves the highest weight structure. Since an equivalence of categories maps initial segments to initial segments, claim~\eqref{initseg.1} follows.

Now we turn to $\cO^\infty$ for arbitrary central characters. We argue that the $\Ext^1$-quiver of 
$\cO^\infty$ is the same one as for $\cO$, up to loops (that is self-extensions of simple modules). 
Imagine there is a module $M\not\in\cO$ satisfying
$$L(\lambda)\hookrightarrow M\tto L(\lambda')$$
for $\lambda,\lambda'\in \fh^\ast$. This module is clearly in $\cO^2$. If $\lambda'\not\le \lambda$, then $M$ is a quotient of $M(\lambda')$ by the universal property of Verma modules. If $\lambda'< \lambda$ we can use the duality $\star$ on $\cO^2$, which preserves $\cO$, to return to the previous situation. Therefore $\lambda=\lambda'$.

Going from $\cO$ to $\cO^\infty$, we therefore have that the extension quivers coincide up to self-extensions and the projective dimensions of simple modules coincide up to a shift by $\dim\fh$, see Theorem~\ref{pdthm}. Therefore claim~\eqref{initseg.2} follows.
\end{proof}

Lemma \ref{initseg} allows us to apply a result on stratified algebras by Cline, Parshall and Scott in
\cite{CPS}, or the special case of quasi-hereditary algebras in \cite{CPS1}.

\begin{proof}[Proof of Theorem \ref{WAC}]
Lemma \ref{initseg}\eqref{initseg.1} implies that, in every regular block, the initial segments correspond to ideals in the poset of weights. The property for $\cO_{\chi}$ with $\chi$ regular therefore follows from \cite[Theorem~3.9(i)]{CPS1}.

Indecomposable blocks of category $\cO^I$ are stratified (in the sense of \cite[Definition 2.2.1]{CPS})
with respect to the order $\le$ on the weights, see  \cite[Lemma~8]{Soergel}
and \cite[Theorem~7]{Soergel} or \cite[Corollary~9(a)]{KKM}. Take $\Gamma$ an ideal in the poset (of a regular block) and let $\cO^I_{\Gamma}$ be the Serre subcategory of $\cO^I$ generated by $\{L(\lambda)\,|\,\lambda\in \Gamma\}$. Then
$$\Ext^j_{\cO^I_{\Gamma}}(M,N)\to \Ext^j_{\cO^I}(M,N)$$
is an isomorphism for any $M,N\in\cO^I_\Gamma$, see e.g. \cite[Equation 2.1.2.1]{CPS}. 
Corollary~\ref{inftyabstract} then implies that 
$$\Ext^d_{\cO^\infty_{\Gamma}}(M,N)\to \Ext^d_{\cO^\infty}(M,N)$$
is an isomorphism for any $M,N\in\cO^\infty_{\Gamma}$ and $d\ge 0$, so $\cO^\infty_{\chi}$ is 
Guichardet by Lemma~\ref{initseg}\eqref{initseg.2}, which proves claim~\eqref{WAC.1}.

Claim~\eqref{WAC.2} follows immediately from the singular example for $\mathfrak{sl}(3)$ described in Subsection~\ref{secsing.2}.
\end{proof}

\section{Harish-Chandra bimodules}\label{sechc}

Let $\chi$ be a regular central character and $\theta$ be a central character in the same weight lattice as $\chi$.
The equivalences of categories in \cite[Theorem~5.9]{BG} and
\cite[Theorem~1]{SoergelF} imply that for $k\geq 1$ the category $\cO^k_{\theta}$ is equivalent to both ${}_{\chi}^k\cH^\infty_{\theta}$ and ${}_{\,\,\theta}^{\infty}\cH^k_{\chi}$ and that $\cO^\infty_{\theta}$ is equivalent to both ${}_{\,\,\theta}^\infty\cH^\infty_{\chi}$ and ${}_{\,\,\chi}^\infty\cH^\infty_{\theta}$. 
As a consequence, the claims of the following  result follow from 
Corollary~\ref{gldiminfty}, Theorem~\ref{pdthm} and Theorem~\ref{WAC}, respectively.

\begin{theorem}\label{thmHC}
Let $\chi$ be a regular central character and $\theta$ be a central character in the same weight lattice as $\chi$.
\begin{enumerate}[$($i$)$]
\item\label{thmHC.1} 
The global dimension of the category ${}_{\,\,\chi}^\infty\cH^\infty_{\theta}$ is finite. If $\chi$ is integral, then the global dimension of ${}_{\,\,\chi}^\infty\cH^\infty_{\chi}$ is $\dim\fg$.
\item\label{thmHC.2} 
Consider $M\in{}_{\chi}^k\cH^\infty_{\theta}$ with $\pd_{{}_{\chi}^k\cH^\infty_{\theta}}M<\infty$, then
$$\pd_{{}_{\,\,\chi}^\infty\cH^\infty_{\theta}}M\,\,=\,\,\dim\fh\,+\,\pd_{{}_{\chi}^k\cH^\infty_{\theta}}M.$$
\item\label{thmHC.3} 
If $\theta$ is also regular, the categories ${}_{\,\,\theta}^\infty\cH^\infty_{\chi}$, ${}_{\,\,\chi}^\infty\cH^\infty_{\theta}$,  ${}_{\chi}^1\cH^\infty_{\theta}$  and ${}_{\,\,\theta}^{\infty}\cH_{\chi}^1$ are Guichardet.
\end{enumerate}
\end{theorem}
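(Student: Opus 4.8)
The plan is to transport each of the three claims along the equivalences recalled just above the statement and then invoke the corresponding result already proved in the paper. The key general fact is that global dimension, the projective dimension of an object, and the Guichardet property are all intrinsic to an abelian category and are therefore preserved by any equivalence. Moreover $\cO^\infty$ (and likewise each $\cO^I$) decomposes as a direct sum of its blocks $\cO^\infty_\psi$ over central characters $\psi$, with no extensions between distinct blocks, so projective and global dimensions computed inside a single block coincide with those computed in the ambient category. Thus, after transporting, each claim becomes a statement about a single block of $\cO^\infty$ or $\cO^I$ that has already been settled.

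For \eqref{thmHC.1} I would use the equivalence ${}_{\,\,\chi}^\infty\cH^\infty_{\theta}\simeq\cO^\infty_\theta$. As a block of $\cO^\infty$, its global dimension is at most $\mathrm{gl.dim}\,\cO^\infty=\dim\fg$ by Corollary~\ref{gldiminfty}, which gives finiteness for arbitrary $\theta$ in the weight lattice. For the equality when $\theta=\chi$ is integral (and regular by hypothesis), note that $\cO^\infty_\chi$ is equivalent to the principal block $\cO^\infty_0$; the latter already attains $\dim\fg$, since the nonvanishing $\Ext^{\dim\fg}_{\cO^\infty}(\mC,\mC)=\Ext^{\dim\fg}_\fg(\mC,\mC)\neq 0$ underlying Corollary~\ref{gldiminfty} takes place entirely within the principal block, as $\mC=L(0)$ lies there.

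For \eqref{thmHC.2}, the equivalences ${}_{\chi}^k\cH^\infty_{\theta}\simeq\cO^k_\theta$ and ${}_{\,\,\chi}^\infty\cH^\infty_{\theta}\simeq\cO^\infty_\theta$ identify the two projective dimensions in the statement, together with the finiteness hypothesis, with $\pd_{\cO^{I_k}}M$ and $\pd_{\cO^\infty}M$ for the module corresponding to $M$ in the block $\cO^{I_k}_\theta$; the asserted formula is then precisely Theorem~\ref{pdthm}\eqref{pdthm.1}. For \eqref{thmHC.3}, with $\theta$ also regular, the equivalences ${}_{\,\,\theta}^\infty\cH^\infty_{\chi}\simeq\cO^\infty_\theta$ and ${}_{\,\,\chi}^\infty\cH^\infty_{\theta}\simeq\cO^\infty_\theta$ reduce the first two categories to $\cO^\infty_\theta$, while ${}_{\chi}^1\cH^\infty_{\theta}\simeq\cO^1_\theta$ and ${}_{\,\,\theta}^{\infty}\cH_{\chi}^1\simeq\cO^1_\theta$ reduce the last two to $\cO^1_\theta=\cO_\theta$, using that $I_1=\fh S(\fh)$ has $S(\fh)/I_1=\mC$, so $\cO^1=\cO$. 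In all four cases Theorem~\ref{WAC}\eqref{WAC.1} yields the Guichardet property, which transfers across the equivalence because an equivalence preserves simple objects, their projective dimensions, $\Ext^1$, and Serre subcategories, and therefore maps initial segments to initial segments while preserving extension fullness.

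Since the substantial homological work is already contained in Corollary~\ref{gldiminfty}, Theorem~\ref{pdthm} and Theorem~\ref{WAC}, I expect the argument to be essentially bookkeeping, with no serious obstacle. The points requiring care are organizational: matching the left and right central characters correctly across the equivalences of \cite{BG,SoergelF}, and verifying that the three invariants are genuinely categorical and may be computed block by block. The mildly delicate step is the lower bound in \eqref{thmHC.1}, where one must exhibit a module realizing the top degree $\dim\fg$ inside the given integral regular block; this is handled by reducing to the principal block via the standard equivalence of integral regular blocks.
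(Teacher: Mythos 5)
Your proposal is correct and follows essentially the same route as the paper: the paper likewise transports all three claims along the equivalences of \cite[Theorem~5.9]{BG} and \cite[Theorem~1]{SoergelF} identifying ${}_{\chi}^k\cH^\infty_{\theta}$ and ${}_{\,\,\theta}^{\infty}\cH^k_{\chi}$ with $\cO^k_\theta$ (and the $\infty$-versions with $\cO^\infty_\theta$), and then cites Corollary~\ref{gldiminfty}, Theorem~\ref{pdthm} and Theorem~\ref{WAC} for the three parts. The paper's argument is stated in one sentence, so your additional bookkeeping (invariance under equivalence, block-by-block computation, and the lower bound via $\mC=L(0)$ in the principal block) merely makes explicit what the paper leaves implicit.
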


We conclude with the result that, in spite of Theorem \ref{finalresult}, the category 
${}_{\,\,\chi}^\infty\cH^\infty_{\chi}$ is not extension full in the category of bimodules.

\begin{proposition}\label{proplast}
The category ${}_{\,\,\chi}^\infty\cH^\infty_{\chi}$ for $\chi$ a regular integral central character is not extension full in the category of $\fg$-bimodules.
\end{proposition}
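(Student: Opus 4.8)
The plan is to exhibit a specific pair of objects $M,N \in {}_{\,\,\chi}^\infty\cH^\infty_{\chi}$ and a homological degree $d$ for which the canonical map $\varphi^d_{M,N}$ fails to be an isomorphism, thereby contradicting extension fullness. The natural strategy is to compare the global dimension of ${}_{\,\,\chi}^\infty\cH^\infty_{\chi}$ with that of the full category of $\fg$-bimodules. By Theorem~\ref{thmHC}\eqref{thmHC.1}, when $\chi$ is regular integral the global dimension of ${}_{\,\,\chi}^\infty\cH^\infty_{\chi}$ equals $\dim\fg$. On the other hand, the category of $\fg$-bimodules is naturally identified with the category of modules over $\fg\oplus\fg$ (equivalently, $U(\fg)\otimes U(\fg)^{\mathrm{op}}$-modules), whose homological behaviour is governed by the diagonal copies and total dimension $2\dim\fg$. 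If extension fullness held, then for every $M,N$ in the bimodule category the groups $\Ext^d$ computed in the subcategory and the ambient category would agree, which in particular would force the projective dimension of each object, computed inside the subcategory, to bound the projective dimension computed in the ambient category.

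The key step is to produce an object whose projective dimension in the ambient bimodule category strictly exceeds $\dim\fg$, while its projective dimension inside ${}_{\,\,\chi}^\infty\cH^\infty_{\chi}$ is finite (hence at most $\dim\fg$). The trivial bimodule $\mC$, or more precisely a suitable structural bimodule sitting in the block, is the natural candidate. First I would pin down the global dimension of the category of $\fg$-bimodules, using the Lie algebra cohomology computation analogous to Corollary~\ref{gldiminfty}: since the relevant acting algebra is $\fg\oplus\fg$, the top nonvanishing self-extension group lives in degree $2\dim\fg$, by the Künneth-type argument together with Lemma~\ref{algcohom} applied to $\fg\oplus\fg$. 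Concretely, $\Ext^{2\dim\fg}_{\fg\text{-bimod}}(\mC,\mC)\cong H^{2\dim\fg}(\fg\oplus\fg,\mC)\neq 0$, so the ambient global dimension is $2\dim\fg$.

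Having established that the ambient category admits extensions in degrees up to $2\dim\fg > \dim\fg$, I would then locate an object $N$ in the subcategory for which $\Ext^d_{\fg\text{-bimod}}(M,N)\neq 0$ for some $d > \dim\fg$, while $\Ext^d_{{}_{\,\,\chi}^\infty\cH^\infty_{\chi}}(M,N)=0$ forcibly vanishes because that degree already exceeds the subcategory's global dimension $\dim\fg$. For such $M,N$ and $d$ the map $\varphi^d_{M,N}$ cannot be surjective, contradicting extension fullness. The cleanest realization is to take $M$ and $N$ both to be the relevant dominant or simple bimodule in the block and compute that a nonzero higher self-extension persists in the bimodule category beyond degree $\dim\fg$; this is where the asymmetry between the single-sided annihilation conditions defining ${}_{\,\,\chi}^\infty\cH^\infty_{\chi}$ and the full bimodule structure becomes visible.

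The main obstacle will be verifying the nonvanishing of the ambient extension group in the critical degree and matching it with a concrete object of the subcategory, rather than merely comparing global dimensions abstractly (a mismatch of global dimensions alone does not formally contradict extension fullness, since the extra extensions in the ambient category could a priori involve objects outside the subcategory). I would therefore need to ensure that both arguments of the relevant $\Ext$ lie in ${}_{\,\,\chi}^\infty\cH^\infty_{\chi}$ while the extension survives in the larger category; the most delicate part is controlling how the Lie algebra cohomology of $\fg\oplus\fg$ restricts to the block and confirming that the socle-type argument underlying Lemma~\ref{algcohom} produces a class supported on objects of the subcategory. Once such a class is identified in a degree strictly between $\dim\fg$ and $2\dim\fg$, the failure of $\varphi^d$ to be an isomorphism is immediate.
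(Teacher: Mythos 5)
Your proposal is in substance the paper's own proof: invoke Theorem~\ref{thmHC}\eqref{thmHC.1} to get $\mathrm{gl.dim}\,{}_{\,\,\chi}^\infty\cH^\infty_{\chi}=\dim\fg$, identify $\fg$-bimodules with $\fg\oplus\fg$-modules, and use Lemma~\ref{algcohom} to see that
$\Ext^{2\dim\fg}_{\fg\text{-}\mathrm{mod}\text{-}\fg}(\mC,\mC)\cong H^{2\dim\fg}(\fg\oplus\fg,\mC)\neq 0$,
so that $\varphi^{2\dim\fg}_{\mC,\mC}$ maps a zero group to a nonzero group and cannot be an isomorphism.

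The one point worth correcting is that the ``main obstacle'' in your final paragraph is not an obstacle at all, and your hedging there is unnecessary: your computation with $M=N=\mC$ already finishes the proof. You are right that a mismatch of global dimensions alone proves nothing and that both arguments of the relevant $\Ext$ must lie in the subcategory; but this is exactly what the choice $M=N=\mC$ achieves. Reducing without loss of generality to the case where $\chi$ is the central character of the trivial module (as the paper does), the trivial bimodule $\mC$ is an object of ${}_{\,\,\chi}^\infty\cH^\infty_{\chi}$, since $\ker\chi$ annihilates it on both sides. There is nothing to check about ``how the Lie algebra cohomology of $\fg\oplus\fg$ restricts to the block'': extension fullness concerns $\Ext$ groups whose two \emph{endpoints} lie in the subcategory, while the intermediate terms of the Yoneda sequences representing classes in $\Ext^{2\dim\fg}_{\fg\text{-}\mathrm{mod}\text{-}\fg}(\mC,\mC)$ may be arbitrary bimodules, exactly as in Subsection~\ref{secPrel.2}. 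Since $2\dim\fg>\dim\fg=\mathrm{gl.dim}\,{}_{\,\,\chi}^\infty\cH^\infty_{\chi}$ forces $\Ext^{2\dim\fg}_{{}_{\,\,\chi}^\infty\cH^\infty_{\chi}}(\mC,\mC)=0$, the failure of surjectivity of $\varphi^{2\dim\fg}_{\mC,\mC}$ is immediate, and no further search for a class ``supported on objects of the subcategory'' is needed.
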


\begin{proof}
Without loss of generality we may assume that  $\chi$ is the central character of the trivial $\fg$-module $\mC$. As noted in Theorem \ref{thmHC}, the global dimension of the category ${}_{\,\,\chi}^\infty\cH^\infty_{\chi}$ is $\dim\fg$. The trivial $\fg$-bimodule $\mC$ is an object of ${}_{\,\,\chi}^\infty\cH^\infty_{\chi}$. Identifying $\fg$-bimodules with $\fg\oplus\fg$-modules, we find that
$$\Ext^{2\dim\fg}_{\fg\text{-}\mathrm{mod}\text{-}\fg}(\mC,\mC)\cong H^{2\dim\fg}(\fg\oplus\fg,\mC)\not=0,$$
by Lemma \ref{algcohom}. This implies that $$0= \Ext^{2\dim\fg}_{{}_{\,\,\chi}^\infty\cH^\infty_{\chi}}(\mC,\mC)\,\,\,\to\,\,\,\Ext^{2\dim\fg}_{\fg\text{-}\mathrm{mod}\text{-}\fg}(\mC,\mC)$$ can not be an isomorphism.
\end{proof}
\vspace{1cm}

\noindent
{\bf Acknowledgment.}
KC is a Postdoctoral Fellow of the Research Foundation - Flanders (FWO).
VM is partially supported by the Swedish Research Council,
Knut and Alice Wallenbergs Stiftelse and the Royal Swedish Academy of Sciences.
We thank Vera Serganova and Catharina Stroppel for useful discussions.
We thank the referee for helpful comments.
\vspace{1cm}

\vspace{5mm}

\noindent
KC: Department of Mathematical Analysis, Ghent University, Krijgs-laan 281, 9000 Gent, Belgium;
E-mail: {\tt Coulembier@cage.ugent.be} 
\vspace{2mm}

\noindent
VM: Department of Mathematics, University of Uppsala, Box 480, SE-75106, Uppsala, Sweden;
E-mail: {\tt  mazor@math.uu.se}
\date{}

\end{document}